\DeclareRobustCommand{\greektext}{%
  \fontencoding{LGR}\selectfont\def\encodingdefault{LGR}}
\DeclareRobustCommand{\textgreek}[1]{\leavevmode{\greektext #1}}
\numberwithin{equation}{section}
\numberwithin{figure}{section}
\theoremstyle{plain}
\newtheorem{thm}{\protect\theoremname}
  \theoremstyle{definition}
  \newtheorem{defn}[thm]{\protect\definitionname}
  \theoremstyle{remark}
  \newtheorem{rem}[thm]{\protect\remarkname}
  \theoremstyle{plain}
  \newtheorem{lem}[thm]{\protect\lemmaname}
  \theoremstyle{plain}
  \newtheorem{prop}[thm]{\protect\propositionname}
  \theoremstyle{plain}
  \newtheorem{cor}[thm]{\protect\corollaryname}
  \providecommand{\corollaryname}{Corollary}
  \providecommand{\definitionname}{Definition}
  \providecommand{\lemmaname}{Lemma}
  \providecommand{\propositionname}{Proposition}
  \providecommand{\remarkname}{Remark}
\providecommand{\theoremname}{Theorem}
\begin{document}
\global\long\def\bbN{\mathbb{N}}

\global\long\def\bbQ{\mathbb{Q}}

\global\long\def\bbR{\mathbb{R}}

\global\long\def\bbZ{\mathbb{Z}}

\global\long\def\bbC{\mathbb{C}}

\global\long\def\eset{\emptyset}

\global\long\def\nto{\nrightarrow}

\global\long\def\re{\mathrm{Re\,}}

\global\long\def\im{\mathrm{Im\,}}

\global\long\def\limti{{\displaystyle \lim_{n\to\infty}}}

\global\long\def\sumnti{{\displaystyle \sum_{n=1}^{\infty}}}

\global\long\def\sumktn{{\displaystyle \sum_{k=1}^{n}}}

\global\long\def\E{{\bf E}}

\global\long\def\ind{\mathbbm{1}}

\global\long\def\O{O}

\global\long\def\Leb{\mathcal{\lambda}}

\title{Ergodic Properties of the Random Walk Adic \\
Transformation over the $\beta$-Transformation}

\author{Michael Bromberg\\
School of mathematical sciences, Tel Aviv University, Tel Aviv 69978,
Israel.}
\begin{abstract}
We define a random walk adic transformation associated to an aperiodic
random walk on $G=\bbZ^{k}\times\bbR^{D-k}$ driven by a $\beta$-transformation
and study its ergodic properties. In particular, this transformation
is conservative, ergodic, infinite measure preserving and we prove
that it is asymptotically distributionally stable and bounded rationally
ergodic. Related earlier work appears in \cite{AS} and \cite{ANSS}
for random walk adic transformations associated to an aperiodic random
walk driven by a subshift of finite type.%
\thanks{This research was partially supported by ISF grant 1599/13.%
}%
\thanks{This paper is part of a thesis submitted in partial fulfillment of
the requirements for the degree of Doctor of Philosophy (Mathematics)
at the Tel Aviv University%
} 
\end{abstract}
\maketitle

\section{Introduction}

Let $\left(X,\mathcal{B},m\right)$ be a $\sigma$-finite measure
space with infinite measure $m$ and let $T:X\rightarrow X$ be a
conservative, ergodic transformation preserving the measure $m$.
It is a consequence of Hopf's ratio ergodic theorem that for every
$f\in L^{1}\left(m\right)$, the normalized Birkhoff sums $\frac{1}{n}\sum_{k=0}^{n-1}f\circ T^{n}\left(x\right)$
tend to $0$ a.e. Moreover, (see \cite[Theroem 2.4.2]{Aa1}), for
every sequence of constants $a_{n}>0$, either $\liminf_{n\rightarrow\infty}\frac{1}{a_{n}}\sum_{k=0}^{n-1}f\circ T^{n}\left(x\right)=0$
a.e. for all non-negative $f\in L^{1}\left(m\right)$ or there exists
a subsequence $n_{k}$ such that $\lim_{k\rightarrow\infty}\frac{1}{a_{n_{k}}}\sum_{j=0}^{n_{k}-1}f\circ T^{j}\left(x\right)=\infty$
a.e for all non-negative $f\in L^{1}\left(m\right)$. It follows that
there is no sequence of constants $a_{n}$, such that $\sum_{k=0}^{n-1}f\circ T^{n}\propto a_{n}$.
$ $Nevertheless, for certain transformations, there are weaker types
of convergence for which $\frac{1}{a_{n}}\sum_{k=0}^{n-1}f\circ T^{n}$
converges. One such notion, which we proceed to define in the following
paragraph and is the subject of study in this paper, is that of distributional
stability (see \cite[3.6]{Aa1}). 

Recall that convergence in distribution of a sequence of random variables
$f_{n}$ to a random variable $f$ all taking values in some Polish
space $\mathcal{C}$, means that $E\left(g\circ f_{n}\right)\underset{n\rightarrow\infty}{\longrightarrow}E\left(g\circ f\right)$
for all bounded, continuous $g:\mathcal{C}\rightarrow\bbR$. Let $\left(X,\mathcal{B},m\right)$
be a $\sigma$-finite, infinite measure space, $f_{n}:X\rightarrow\left[0,\infty\right]$
be measurable and let $f\in\left[0,\infty\right]$ be a random variable
defined on some probability space. Let $\nu$ be some probability
measure absolutely continuous with respect to $m$. Then $\left\{ f_{n}\right\} $
may be viewed as a sequence of random variables defined on the probability
space $\left(X,\mathcal{B},\nu\right)$ and we write $f_{n}\overset{\nu}{\longrightarrow}f$
if $f_{n}$ converges in distribution to $f$. We say that $f_{n}$
converges strongly in distribution to $f$ and write $f_{n}\overset{\mathcal{L}\left(m\right)}{\longrightarrow}f$
if $f_{n}\overset{\nu}{\longrightarrow}f$ with respect to any probability
measure $\nu$, absolutely continuous with respect to $m$. Equivalently,
this means that $g\circ f_{n}\rightarrow E\left(g\left(f\right)\right)$
weak-$*$ in $L^{\infty}\left(m\right)$ for each bounded and continuous
$g:\left[0,\infty\right]\rightarrow\bbR$, i.e. $\int g\circ f_{n}\cdot p\, dm\rightarrow E\left(g\circ f\right)\int p\, dm$
for all $p\in L^{1}\left(m\right)$ (here and throughout this paper,
the space $\left[0,\infty\right]$ is the one point compactification
of $\left[0,\infty\right)$). 
\begin{defn}
A conservative, ergodic measure preserving transformation $\left(X,\mathcal{B},m,T\right)$
is distributionally stable if there is a sequence of constants $a_{n}>0$,
and a random variable $Y$ taking values in $\left(0,\infty\right)$,
such that 
\begin{equation}
\frac{1}{a_{n}}S_{n}\left(f\right)\overset{\mathcal{L}\left(m\right)}{\rightarrow}Ym\left(f\right)\label{eq: Distributional Stability}
\end{equation}
 for all $f\in L^{1}\left(m\right)$, $f\geq0$, where $S_{n}\left(f\right):=\sum_{k=0}^{n-1}f\circ T^{k}$
and $m\left(f\right):=\int_{X}f\, dm$. 
\end{defn}
Note that by Hopf's rational ergodic theorem if (\ref{eq: Distributional Stability})
holds for some $f\in L^{1}\left(m\right)$, $f\geq0$ then it holds
for all $f\in L^{1}\left(m\right)$. Moreover, if (\ref{eq: Distributional Stability})
holds, then the sequence $a_{n}$ is unique up to asymptotic equality
and is called the return sequence of $a_{n}$. In \cite{Aa2} distributional
stability was proved for pointwise dual ergodic transformations having
regularly varying return sequences with Mittag-Leffler distributions
appearing as limits (see also \cite{Aa2}). More recently, (see \cite{AS},
\cite{ADDS}) distributional stability was proved for certain transformations
with exponential chi-squared distributions appearing as limits. In
particular, it is proved in \cite{AS} that the random walk adic transformation
associated with an aperiodic random walk on $\bbZ^{k}\times\bbR^{D-k}$
driven by a subshift of finite type is distributionally stable with
exponential chi squared distribution (with $D$ degrees of freedom)
in the limit. One purpose of this paper to define a random walk adic
transformation associated with an aperiodic walk driven by the $\beta$-transformation,
and to prove that it is distributionally stable with chi squared exponential
distribution in the limit (see theorem \ref{thm:Main Theorem}).

Another notion which we study in this paper is that of bounded rational
ergodicity. 
\begin{defn}
A consrvative, ergodic, infinite measure preserving transformation
$\left(X,\mathcal{B},m,T\right)$ is called \textit{bounded rationally
ergodic }(see \cite{Aa3}) if there exists a measurable set $A\subseteq X$
with $0<m\left(A\right)<\infty$, such that there exists $M>0$, such
that for all $n\geq1$
\begin{equation}
\left\Vert \sum_{k=0}^{n-1}\ind_{A}\circ T^{k}\right\Vert {}_{L^{\infty}\left(A\right)}\leq M\int\limits _{A}\left(\sum_{k=0}^{n-1}\ind_{A}\circ T^{k}\right)dm.\label{eq:Bdd rat ergodicity}
\end{equation}
The rate of growth of the sequence $a_{n}=\frac{1}{m\left(A^{2}\right)}\int_{A}\left(\sum_{k=0}^{n-1}\ind_{A}\circ T^{k}\right)dm$
does not depend on a set $A$ satisfying (\ref{eq:Bdd rat ergodicity}).
Bounded rational ergodicity implies a kind of absolutely normalized
ergodic theorem stating that 
\[
\frac{S_{n}\left(f\right)}{a_{n}}\rightsquigarrow\int\limits _{X}fdm,\;\forall f\in L^{1}\left(m\right)
\]
where $f_{n}\rightsquigarrow f$ means that $\forall m_{l}\uparrow\infty$
$\exists n_{k}=m_{l_{k}}\uparrow\infty$ such that $\forall p_{j}=n_{k_{j}}$
such that $\forall p_{j}=n_{k_{j}}\uparrow\infty$, we have $\frac{1}{N}\sum_{j=1}^{N}f_{p_{j}}\underset{N\rightarrow\infty}{\longrightarrow}f$
a.e. In section (\ref{sec:Bounded-rational-ergodicity}) we prove
that the random walk adic transformation associated with an aperiodic
random walk driven by the $\beta$-transformation is bounded rationally
ergodic with $a_{n}\propto\frac{n}{\sqrt{\log n}}$. Bounded rational
ergodicity of random walk adic transformations associated to an aperiodic
random walk driven by a subshift of finite type are studied in \cite{ANSS}. 
\end{defn}

\section{\label{sec:adic-transformation}adic transformation associated with
a $\beta$-transformation}

\subsection{$\beta$-Transformations.}

In this section we give some preliminaries concerning $\beta$- transformations
and refer the reader to \cite{DK,Pa,Re,Bl} for proofs of all facts
stated herein. 

In what follows $\left[x\right]:=\min\left\{ n\in\bbZ:n\leq x\right\} $,
$\left(x\right)=x-\left[x\right]$, $\Leb$ is the Lebesgue measure
on $\left[0,1\right)$. 

The beta transformation is defined on $X=\left[0,1\right)$ by 
\[
T_{\beta}\left(x\right):=\beta x\, mod\,1\equiv\left(\beta x\right)
\]
It was proved by Rényi (see \cite{Re}) that there exists a unique,
ergodic, $T_{\beta}$-invariant measure $m$, equivalent to the Lebesgue
measure on $X$. Moreover, the invariant density, which we denote
by $h$, satisfies $1-\frac{1}{\beta}\leq h\left(x\right)\leq\frac{1}{1-\frac{1}{\beta}}$
for all $x\in X$. Every $x\in X$ has a $\beta$-expansion of the
form $x=\sum_{k=1}^{\infty}\frac{d_{k}\left(x\right)}{\beta^{k}}$
where $d_{k}\left(x\right):=\left[\beta T_{\beta}^{k-1}x\right]$.
Although $1$ is not in the domain of $T$, defining $T\left(1\right)=\beta-\left[\beta\right]$,
we can still consider the $\beta$-expansion of $1$ given by $1=\sum_{k=1}^{\infty}\frac{d_{k}\left(1\right)}{\beta^{k}}$,
where $d_{k}\left(1\right):=\left[\beta T_{\beta}^{k-1}1\right]$.
In what follows, we denote by $d\left(x,\beta\right)$ the sequence
of digits in the $\beta$-expansion of $x\in X\bigcup\left\{ 1\right\} $,
i.e $d\left(x,\beta\right)=\left(d_{1}\left(x\right),d_{2}\left(x\right),...\right)$
. Not every sequence of integers between $1$ and $\left[\beta\right]$
gives rise to a $\beta$-expansion of some $x\in X$. We say that
the sequence $\left(d_{1},d_{2},...\right)\in\left\{ 0,...,\left[\beta\right]\right\} ^{\bbZ}$
is $\beta$-admissible if it is the $\beta$-expansion of some $x\in X$,
i.e if $d_{k}=d_{k}\left(x\right)$ for some $x\in X$. The set of
$\beta$-admissible sequences, which we denote by $S_{\beta}$, is
a closed shift invariant subspace of $\left\{ 0,...,\left[\beta\right]\right\} ^{\bbZ}$.
This set may be linearly ordered by lexicographic order which we denote
by $\prec_{lex}$ in the obvious way. Namely, for distinct $\omega,\eta\in S_{\beta}$,
\[
\omega\prec_{lex}\eta
\]
 if there exists $n\in\bbN$ such that $\omega_{i}=\eta_{i}$ for
all $i<n$ and $\omega_{n}<\eta_{n}$. 

The map 
\begin{equation}
\psi\left(x\right)=\left(d_{1}\left(x\right),d_{2}\left(x\right)...\right)\label{eq: Conjugacy}
\end{equation}
 is one to one, onto, bi-measurable from $X$ to $S_{\beta}$ and
satisfies $\psi^{-1}\circ\sigma\circ\psi\left(x\right)=T_{\beta}\left(x\right)$
for all $x\in X$, where $\sigma$ is the left shift on $S_{\beta}$.
Thus, $\psi$ is an isomorphism between the systems $\left(X,\mathcal{B},m,T_{\beta}\right)$
and $\left(S_{\beta},\mathcal{C},\nu,\sigma\right)$ where $\mathcal{C}$
is the natural (Borel) $\sigma$-algebra on $\left\{ 0,...,\left[\beta\right]\right\} ^{\bbZ}$
restricted to $S_{\beta}$, and $\nu$ is the push forward of $m$
by $\psi$. If the $\beta$-expansion of $1$ is finite then $\left(S_{\beta},\sigma\right)$
is a subshift of finite type and as the case of subshifts of finite
types has been dealt with in \cite{AS}, we shall assume that $d\left(1,\beta\right)$
is not finite. In this case, the set of admissible sequences is identified
by the following theorem \cite{Pa}: 
\begin{thm}
If $d\left(1,\beta\right)$ is not eventually periodic than the sequence
$\omega=\left(d_{1},d_{2},...\right)$ is $\beta$-admissible if and
only if $\sigma^{n}\omega\prec_{lex}d\left(1,\beta\right)$ for all
$n\in\bbN$. 
\end{thm}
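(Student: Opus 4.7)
The plan is to prove Parry's classical characterization of $\beta$-admissible sequences, using the conjugacy $\psi$ from equation~(\ref{eq: Conjugacy}) which identifies $T_\beta$ with the shift $\sigma$, and proving both implications.

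\textbf{Forward direction.} Suppose $\omega = d(x, \beta)$ for some $x \in X$. Since $\sigma^n \omega = d(T_\beta^n x, \beta)$ and $T_\beta^n x \in [0, 1)$, it suffices to show $d(y, \beta) \prec_{lex} d(1, \beta)$ for every $y \in [0, 1)$. I would argue by digit-by-digit comparison: if $d_i(y) = d_i(1)$ for $i < k$, a short induction using $T_\beta(z)=\beta z-[\beta z]$ yields $T_\beta^{k-1}(y) < T_\beta^{k-1}(1)$. Agreement at every digit would force $y = \sum_k d_k(1)/\beta^k = 1$, contradicting $y < 1$; hence a minimal $k$ with $d_k(y) \neq d_k(1)$ exists, and the strict inequality for $T_\beta^{k-1}$ then forces $d_k(y) < d_k(1)$, giving the strict lex comparison.

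\textbf{Backward direction.} Assume $\sigma^n \omega \prec_{lex} d(1, \beta)$ for every $n$ and set $x_n(\omega) := \sum_{k \geq 1} \omega_{n+k}/\beta^k$, so the identity $\beta x_n = \omega_{n+1} + x_{n+1}$ is immediate. If $x_n \in [0, 1)$ for all $n$, then $\omega_{n+1} = [\beta x_n]$ and so $\omega = d(x_0, \beta)$ is admissible. By shift-invariance of the hypothesis, the task reduces to showing $x_0(\omega) < 1$ for every $\omega$ satisfying the hypothesis.

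\textbf{Main obstacle.} The crux of the argument is the bound $x_0 < 1$, which must simultaneously exploit the hypothesis at every shift. I would set $M := \sup x_0(\omega)$ over all $\omega$ meeting the hypothesis; $M$ is finite since $x_0(\omega) \leq [\beta]/(\beta - 1)$. Let $j = j(\omega) \geq 1$ be the smallest index with $\omega_j < d_j(1)$, so $\omega_i = d_i(1)$ for $i < j$. Using $1 = \sum_{i=1}^j d_i(1)/\beta^i + T_\beta^j(1)/\beta^j$, a direct computation produces
\begin{equation*}
x_0(\omega) = 1 - \beta^{-j}\bigl[(d_j(1) - \omega_j) + T_\beta^j(1) - x_j(\omega)\bigr].
\end{equation*}
Combining $d_j(1) - \omega_j \geq 1$ with $x_j(\omega) \leq M$ (valid because $\sigma^j \omega$ also satisfies the hypothesis) gives $x_0(\omega) \leq 1 + \beta^{-j}(M - 1)$. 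If $M > 1$, this yields $x_0(\omega) \leq 1 + (M - 1)/\beta$ for every admissible $\omega$, and passing to the supremum produces $M(\beta - 1) \leq \beta - 1$, hence $M \leq 1$, a contradiction. So $M \leq 1$, and re-injecting this into the identity gives $x_0(\omega) \leq 1 - \beta^{-j(\omega)} T_\beta^{j(\omega)}(1) < 1$, where strict positivity of $T_\beta^j(1)$ for every $j \geq 0$ follows from $d(1, \beta)$ not being eventually zero, a consequence of the hypothesis that it is not eventually periodic.
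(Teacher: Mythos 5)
Your proof is correct, but note that the paper itself contains no proof of this statement: it is quoted as Parry's theorem with a citation to \cite{Pa}, so there is no in-paper argument to compare against. Your forward direction (digit-by-digit comparison, using the induction $T_\beta^{k-1}(y)<T_\beta^{k-1}(1)$ whenever the first $k-1$ digits agree, plus the identity $1=\sum_k d_k(1)\beta^{-k}$, valid because the expansion of $1$ is infinite) and your reduction of the converse to the single inequality $x_0(\omega)<1$ are both sound. The bootstrap via $M:=\sup x_0(\omega)$ is a clean way to exploit the hypothesis at every shift simultaneously: the identity $x_0(\omega)=1-\beta^{-j}\bigl[(d_j(1)-\omega_j)+T_\beta^j(1)-x_j(\omega)\bigr]$ follows from $T_\beta^j(1)=\beta^j-\sum_{i\le j}d_i(1)\beta^{j-i}$, the self-improving estimate $M\le 1+(M-1)/\beta$ forces $M\le 1$ because $\beta>1$, and re-injection together with $T_\beta^{j}(1)>0$ gives the strict inequality. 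Two small remarks. First, your argument only uses that $d(1,\beta)$ is not finite (equivalently, not eventually $0$), which is strictly weaker than the paper's standing assumption of non-eventual-periodicity; that stronger assumption is imposed in the paper to exclude the subshift-of-finite-type case already treated in \cite{AS}, not because it is needed for this characterization. Second, in the phrase ``for every admissible $\omega$'' near the end you mean ``for every $\omega$ satisfying the lexicographic hypothesis,'' since admissibility is precisely what is being established at that point.
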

Thus, in the case that $d\left(1,\beta\right)$ is not eventually
periodic 
\[
S_{\beta}=\left\{ \omega\in\left\{ 0,...,\left[\beta\right]\right\} ^{\bbZ}:\sigma^{n}\omega\prec_{lex}d\left(1,\beta\right)\right\} .
\]

\begin{rem}
Henceforth, we assume that $d\left(1,\beta\right)$ is not eventually
periodic. 
\end{rem}
Let $\left[d_{1},...,d_{k}\right]:=\left\{ x\in X:x=\sum_{i=1}^{\infty}\frac{d_{i}\left(x\right)}{\beta^{k}},\, d_{i}=d_{i}\left(x\right),\, i=1,...,k\right\} $.
$\left[d_{1},...,d_{k}\right]$ is called a cylinder of rank $k$.
A cylinder $\left[d_{1},...,d_{k}\right]$ is called a full cylinder
of rank $k$ if 
\[
\Leb\left(T^{k}\left[d_{1},...,d_{k}\right]\right)=1
\]
 and non-full otherwise. All full cylinders $\Delta_{k}$ of rank
$k$ satisfy $\Leb\left(\Delta_{k}\right)=\frac{1}{\beta^{k}}$ (see
\cite{DK}) and therefore have equal Lebesgue measure. We state the
following lemma for future reference.
\begin{lem}
\label{lem: Full Intervals Lemma}

\cite{DK} Given any $k\in\bbN$, $X$ can be covered by disjoint
full intervals of rank $k$ or $k+1$. 
\end{lem}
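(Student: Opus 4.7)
The plan is a refinement of the rank-$k$ cylinder partition at its non-full atoms: I would keep every full rank-$k$ cylinder intact and replace each non-full rank-$k$ atom by appropriate rank-$(k+1)$ sub-cylinders.

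First I would identify the non-full rank-$k$ cylinders. Using the order-preserving isomorphism $\psi$ from (\ref{eq: Conjugacy}) together with the admissibility theorem above, $[d_{1},\ldots,d_{k}]$ is non-full precisely when some tail $(d_{l+1},\ldots,d_{k})$ coincides with the initial block $(d_{1}(1),\ldots,d_{k-l}(1))$ of $d(1,\beta)$. Collecting the values of $l$ for which this occurs and choosing the one that yields the smallest $T_{\beta}^{k-l}1$, call it $l_{0}$, order-preservation of $\psi$ converts the sole non-trivial admissibility constraint into $T_{\beta}^{k}x<T_{\beta}^{k-l_{0}}1$, so that
\[
T_{\beta}^{k}[d_{1},\ldots,d_{k}]=[0,\,T_{\beta}^{k-l_{0}}1).
\]

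Second, I would subdivide each non-full $\Delta=[d_{1},\ldots,d_{k}]$ into its rank-$(k+1)$ sub-cylinders. Writing $r=T_{\beta}^{k-l_{0}}1$, the non-empty sub-cylinders of $\Delta$ are indexed by $j\in\{0,1,\ldots,[\beta r]\}$, and $T_{\beta}^{k+1}[d_{1},\ldots,d_{k},j]$ is the image of $[j/\beta,(j+1)/\beta)\cap [0,r)$ under $x\mapsto\beta x-j$. For each $j<[\beta r]$ this image equals $[0,1)$, hence $[d_{1},\ldots,d_{k},j]$ is a full cylinder of rank $k+1$. Combining the original full rank-$k$ cylinders with these rank-$(k+1)$ full sub-cylinders produces a pairwise disjoint family of full intervals of rank $k$ or $k+1$, as required.

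The main obstacle is the `top' sub-cylinder $j=[\beta r]$ left inside each non-full atom: since $d(1,\beta)$ is not eventually periodic one has $T_{\beta}^{m}1>0$ for every $m$, so this sub-cylinder is itself non-full of positive Lebesgue measure, and strictly speaking the family above covers $X$ only in the modulo-null sense in which the lemma is quoted from \cite{DK}. I would handle this residue by iterating the same subdivision inside each `top' sub-cylinder: the Lebesgue measure of the residue at the $n$-th stage is bounded by $T_{\beta}^{k-l_{0}+n}1/\beta^{k+n}$, which tends to $0$, so the nested intersection of residues is Lebesgue-null and can be absorbed into the cover. Carrying out this book-keeping rigorously is where I expect to spend the bulk of the effort.
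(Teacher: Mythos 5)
The paper offers no proof of this lemma --- it is quoted from \cite{DK} --- so your argument can only be measured against the statement itself, and there it falls short: the obstruction you flag in your final paragraph is fatal, not a book-keeping issue. After your one round of subdivision, the part of $X$ not covered by full intervals of rank $k$ or $k+1$ is the union, over the non-full rank-$k$ cylinders, of their ``top'' rank-$(k+1)$ sub-cylinders. Each of these is, as you yourself observe, a non-full cylinder of \emph{positive} Lebesgue measure; moreover it is disjoint from every full cylinder of rank $k$ (any such cylinder is disjoint from the non-full parent) and from every full cylinder of rank $k+1$ (the only rank-$(k+1)$ cylinder meeting it is itself, and it is not full). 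So this set cannot be covered by full intervals of rank $k$ or $k+1$ at all, not even modulo $\Leb$-null sets, and your claim that the family ``covers $X$ in the modulo-null sense'' contradicts your own remark that the residue has positive measure. The proposed repair by iteration necessarily introduces full intervals of rank $k+2,k+3,\dots$; the fact that the nested intersection of residues is null shows only that the iteration exhausts $X$ almost everywhere, not that the ranks can be capped at $k+1$. What you have actually proved is a different statement: up to a $\Leb$-null set, $X$ is a countable disjoint union of full intervals of rank at least $k$.

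Indeed, the lemma as printed is false under the paper's standing hypothesis that $d\left(1,\beta\right)$ is not eventually periodic. Take $\beta=3/2$ and $k=1$. The rank-$1$ cylinder with digit $0$ is the interval $\left[0,2/3\right)$ and is full; the rank-$1$ cylinder with digit $1$ is $\left[2/3,1\right)$, with $T_{\beta}\left(\left[2/3,1\right)\right)=\left[0,1/2\right)$, hence non-full, and its unique rank-$2$ sub-cylinder is all of $\left[2/3,1\right)$, with $T_{\beta}^{2}$-image $\left[0,3/4\right)$, again non-full. The only full cylinders of rank $1$ or $2$ are $\left[0,2/3\right)$ and $\left[0,4/9\right)$, whose union misses the set $\left[2/3,1\right)$ of measure $1/3$. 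The statement should be read --- and is proved exactly by your iteration --- with ``rank $k$ or $k+1$'' replaced by ``rank at least $k$''; that version also suffices for the later proof that $\tau$ preserves $\Leb$, although matching the ranks of the two full cylinders $A\ni x$ and $B\ni\tau x$ then requires an additional argument, since appending digits to a full cylinder does not in general keep it full.
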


\subsection{\label{sub:Adic-transformation}Adic transformation of the $\beta$-transformation. }

The purpose of this section is to define the adic transformation of
the $\beta$-transformation. Adic transformations appear in \cite{Ve},
where they are defined over Bratelli diagrams. We briefly describe
the construction. Let $\mathcal{S}_{k}=\left\{ 0,...,a_{k}\right\} $
be a sequence of finite alphabets and let $A_{k}:\mathcal{S}_{k}\times\mathcal{S}_{k+1}\rightarrow\left\{ 0,1\right\} $
be a sequence of transition matrices. Define $\varOmega:=\prod_{i=1}^{\infty}\mathcal{S}_{k}$
and 
\[
\Sigma:=\left\{ \omega\in\varOmega:A_{k}\left(\omega_{k},\omega_{k+1}\right)=1\right\} .
\]
The adic transformation over the Bratelli diagram $\left\{ \mathcal{S}_{k},A_{k}\right\} $
assigns to $\omega\in\Sigma$ the element of $\Sigma$ that succeeds
$\omega$ in the reverse lexicographic order (see definition (\ref{def:Reverse lexicographic order})
below). The adic transformation over the $\beta$-transformation will
be defined in a similar way, with the exception that the $\beta$-transformation
is not a Bratelli diagram since the set of allowable digits appearing
in the $n$th place of the $\beta$-expansion of a number $x\in X$
depends on the whole prefix and not only on the preceding digit in
the expansion. 
\begin{defn}
The tail relation of the $\beta$-transformation is the equivalence
relation on $X$ given by 
\begin{eqnarray*}
\mathcal{T}\left(T_{\beta}\right): & = & \left\{ \left(x,y\right)\in X\times X:\,\exists K\in\bbN\: such\: that\: d_{k}\left(x\right)=d_{k}\left(y\right)\:\forall k\geq K\right\} \\
 & = & \bigcup_{n\geq0}\left\{ \left(x,y\right):T_{\beta}^{n}x=T_{\beta}^{n}y\right\} 
\end{eqnarray*}

\end{defn}
\global\long\def\newmacroname{}

\begin{defn}
\label{def:Reverse lexicographic order}The reverse lexicographic
order on $X$ is the partial order $\prec_{rev}$ defined by $x\prec_{rev}y$
if and only if there exists $n\in\bbN$, such that $d_{k}\left(x\right)=d_{k}\left(y\right)$
for all $k>N$ and $d_{N}\left(x\right)<d_{n}\left(y\right)$. 
\end{defn}
Thus the equivalence sets of $\mathcal{T}\left(T_{\beta}\right)$
are linearly ordered by $\prec_{rev}$. 

$ $The adic transformation $\tau:X\rightarrow X$ of $T_{\beta}$
is that transformation which parametrizes the tail relation on $X$
(in the sense that $x\sim_{\mathcal{T}\left(X\right)}y$ if and only
if $y=\tau^{n}x$ for some $n\in\bbZ$) and assigns to each $x$ the
minimal $y$ that satisfies $y\succ_{rev}x$ . Thus $\tau$ is defined
by $\tau\left(x\right):=\min\left\{ y:\, d\left(y,\beta\right)\succ_{rev}d\left(x,\beta\right)\right\} $
where the minimum is taken with respect to $\prec_{rev}$. 

Our next objective is to identify the set on which $\tau$ is well
defined, and on which $\tau$ is invertible. To this purpose, we identify
the set of maximal point of $X$ with respect to the reverse lexicographic
order, and show that $\tau$ is well defined outside this set. 
\begin{prop}
\label{prop: Identification of adic transformation}Let 
\[
\Sigma_{max}:=\left\{ x:T_{\beta}^{n}x\geq1-\frac{1}{\beta},\,\forall n\in\bbN_{*}\right\} =\left\{ x:\,\forall n\in\bbN_{*}\, x\in T^{-n}\left[1-\beta,1\right)\right\} .
\]
Then $\tau$ is defined for all $x\in X\setminus\Sigma_{max}$ and
is not defined on $\Sigma_{max}$. Moreover, if for\\* $x\in X\setminus\Sigma_{max}$
\[
d\left(x,\beta\right)=\left(d_{1}\left(x\right),d_{2}\left(x\right),...\right)
\]
 is the sequence of digits in the $\beta$-expansion of $x$, then
\[
\tau\left(x\right)=\left(\left[0\right]_{n_{0}},d_{n_{0}+1}\left(x\right)+1,d_{n_{0}+2}\left(x\right),d_{n_{0}+3}\left(x\right),...\right)
\]
where $n_{0}:=\min\left\{ n\in\bbN_{*}:T_{\beta}^{n}<1-\frac{1}{\beta}\right\} $
and $\left[0\right]_{n}:=\underset{n\, times}{0,...,0}$. \end{prop}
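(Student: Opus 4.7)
My plan is to verify the explicit formula in four steps: (i) produce a candidate $\omega'$, (ii) show $\omega'$ is $\beta$-admissible and satisfies $\omega' \succ_{rev} d(x,\beta)$, (iii) prove $\omega'$ is the smallest admissible sequence with this property, and (iv) rule out any admissible successor when $x \in \Sigma_{max}$.

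For steps (i) and (ii) the key observation is the following identity. Set $y := T_\beta^{n_0} x \in [0, 1 - 1/\beta)$ and $y' := y + 1/\beta \in [1/\beta, 1)$. A direct calculation gives $[\beta y'] = [\beta y] + 1$ and $T_\beta y' = T_\beta y$, whence
\[
d(y', \beta) = \bigl(d_{n_0+1}(x) + 1,\; d_{n_0+2}(x),\; d_{n_0+3}(x), \ldots\bigr).
\]
Since $y'/\beta^{n_0} < 1/\beta^{n_0} \leq 1$, the number $y'/\beta^{n_0} \in [0,1)$ has $\beta$-expansion
\[
\omega' := \bigl([0]_{n_0},\; d_{n_0+1}(x) + 1,\; d_{n_0+2}(x),\; d_{n_0+3}(x), \ldots\bigr),
\]
so $\omega' \in S_\beta$. (The bound $\beta T_\beta^{n_0} x < \beta - 1$ together with the non-integrality of $\beta$ guarantees $d_{n_0+1}(x) + 1 \leq [\beta]$, so this is a legal digit.) Comparing $\omega'$ with $d(x, \beta)$ at coordinate $n_0 + 1$ yields $\omega' \succ_{rev} d(x, \beta)$.

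For the minimality step (iii), suppose $\omega'' \in S_\beta$ with $\omega'' \succ_{rev} d(x, \beta)$, and let $N$ be the largest coordinate where $\omega''$ differs from $d(x, \beta)$, so $\omega''_N \geq d_N(x) + 1$ and $\omega''_k = d_k(x)$ for $k > N$. The shift $\sigma^{N-1} \omega''$ is admissible and so equals $d(z, \beta)$ for some $z \in [0, 1)$; evaluating $z$ as a series gives $z = (\omega''_N - d_N(x))/\beta + T_\beta^{N-1} x \geq 1/\beta + T_\beta^{N-1} x$. The constraint $z < 1$ thus forces $T_\beta^{N-1} x < 1 - 1/\beta$, and the minimality of $n_0$ yields $N \geq n_0 + 1$. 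A coordinate-by-coordinate comparison then gives $\omega' \preceq_{rev} \omega''$: if $N > n_0 + 1$ the largest disagreement between $\omega'$ and $\omega''$ occurs at coordinate $N$, where $\omega'_N = d_N(x) < \omega''_N$; if $N = n_0 + 1$ one has $\omega'_{n_0+1} = d_{n_0+1}(x) + 1 \leq \omega''_{n_0+1}$, and at any smaller coordinate of disagreement $\omega'_k = 0 \leq \omega''_k$. Hence $\tau(x) = \omega'$.

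Step (iv) is then a corollary: if $x \in \Sigma_{max}$ then $T_\beta^{N-1} x \geq 1 - 1/\beta$ for every $N \geq 1$, and the same estimate yields $z \geq 1$, contradicting admissibility of $\omega''$. Thus $\Sigma_{max}$ admits no admissible $\prec_{rev}$-successor and $\tau$ is undefined there. The principal obstacle is step (iii), which requires translating the purely combinatorial condition $\omega'' \succ_{rev} d(x, \beta)$ into the analytic constraint $T_\beta^{N-1} x < 1 - 1/\beta$ via the Parry admissibility criterion recorded in the previous theorem.
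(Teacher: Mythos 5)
Your proof is correct and follows essentially the same route as the paper: the candidate successor is built from the identity $d\left(y+\frac{1}{\beta},\beta\right)=\left(d_{1}\left(y\right)+1,d_{2}\left(y\right),\dots\right)$, and both the minimality of that candidate and the non-existence of any successor on $\Sigma_{max}$ are obtained by converting a reverse-lexicographic comparison into the analytic inequality $T_{\beta}^{N-1}x<1-\frac{1}{\beta}$, which is played off against the minimality of $n_{0}$. The only presentational differences are that you certify admissibility of the zero-padded sequence by exhibiting it as the expansion of $\left(T_{\beta}^{n_{0}}x+\frac{1}{\beta}\right)/\beta^{n_{0}}$ instead of checking Parry's criterion coordinate by coordinate, and you prove minimality directly (showing the candidate is a lower bound for all admissible successors) rather than by contradiction.
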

\begin{proof}
Let $x\in X\setminus\Sigma_{max}$, $n_{0}:=\min\left\{ n:\, T_{\beta}^{n}x<1-\frac{1}{\beta}\right\} $
and let $d\left(x,\beta\right)=\left(d_{1}\left(x\right),d_{2}\left(x\right),...\right)$
be the $\beta$-expansion of $x$. Since $\psi^{-1}\circ\sigma\circ\psi\left(x\right)=T_{\beta}\left(x\right)$
where $\psi$ is as in (\ref{eq: Conjugacy}), it follows that the
$\beta$-expansion of $T_{\beta}^{n_{0}}x$ is $\left(d_{n_{0}+1}\left(x\right),d_{n_{0}+2}\left(x\right),...\right)$.
Now, note that if $y\in X$, $y+\frac{1}{\beta}<1$ then 
\[
d\left(y+\frac{1}{\beta},\beta\right)=\left(d_{1}\left(y\right)+1,d_{2}\left(y\right),d_{3}\left(y\right),...\right).
\]
This is seen as follows: the first digit of the $\beta$-expansion
of $y+\frac{1}{\beta}$ is 
\[
\left[\beta\left(y+\frac{1}{\beta}\right)\right]=\left[\beta y+1\right]=\left[\beta y\right]+1,
\]
while the remaining digits are formed by the expansion of $y+\frac{1}{\beta}-\frac{\left[\beta y\right]+1}{\beta}=y-\frac{\left[\beta y\right]}{\beta}$,
which in turn, coincide with the remaining digits in the expansion
of $y$. Since $T_{\beta}^{n_{0}}x+\frac{1}{\beta}<1$, it follows
that 
\[
d\left(T_{\beta}^{n_{0}}x+\frac{1}{\beta},\beta\right)=\left(d_{n_{0}+1}\left(x\right)+1,d_{n_{0}+2}\left(x\right),d_{n_{0}+3}\left(x\right),...\right)
\]
 is an admissible sequence and therefore, $\omega=\left(\left[0\right]_{n_{0}},d_{n_{0}+1}\left(x\right)+1,d_{n_{0}+2}\left(x\right),d_{n_{0}+3}\left(x\right),...\right)$
is also admissible ($\sigma^{n}\omega\prec_{lex}d\left(1,\beta\right)$
for $n<n_{0}$ since the first digit in $d\left(1,\beta\right)$ is
$\left[\beta\right]>0$, and $\sigma^{n}\omega\prec_{lex}d\left(1,\beta\right)$
for $n\geq n_{0}$ since $\sigma^{n_{0}}\omega$ is admissible). Let
$y\in X$ be such that 
\[
d\left(y,\beta\right)=\omega.
\]
Obviously $x\prec_{rev}y$, and by definition of the reverse lexicographic
order there must be only finite number of elements that lie strictly
between $x$ and $y$. This implies that the set 
\[
\left\{ y:d\left(y,\beta\right)\succ_{rev}d\left(x,\beta\right)\right\} 
\]
 has a minimal element and therefore $\tau\left(x\right)$ is defined. 

We show that $\tau\left(x\right)=y$. If not, then there must exist
$z\in X$ , such that $x\prec_{rev}z\prec_{rev}\omega$. This implies
that there exists $k\leq n_{0}$ such that $d_{k}\left(x\right)<d_{k}\left(y\right)$
and $d_{n}\left(x\right)=d_{n}\left(y\right)$ for all $n>k$. This
implies that 
\[
\sigma^{k-1}\left(d\left(y,\beta\right)\right)\succeq_{lex}\left(d_{k}\left(x\right)+1,d_{k+1}\left(x\right),d_{k+2}\left(x\right),...\right),
\]
which in this case means that $\left(d_{k}\left(x\right)+1,d_{k+1}\left(x\right),d_{k+2}\left(x\right),...\right)$
is an admissible sequence. Therefore, $\frac{d_{k}\left(x\right)+1}{\beta}+\sum_{i=1}^{\infty}\frac{d_{k+1}\left(x\right)}{\beta^{i+1}}<1$
and it follows that 
\[
T_{\beta}^{k-1}x=\sum_{i=0}^{\infty}\frac{d_{k+i}\left(x\right)}{\beta^{i+1}}<1-\frac{1}{\beta}
\]
which contradicts the definition of $n_{0}$. 

Note that in particular, the last argument shows that if $x\prec_{lex}z$,
then there must be some $k\in\bbN_{*}$, such that $T_{\beta}^{k}x<1-\frac{1}{\beta}$
and therefore, $x\notin\Sigma_{max}$. This completes the proof.\end{proof}
\begin{cor}
There exists a measurable, $\tau$ invariant set $\hat{X}\subseteq X$
with $\Leb\left(\hat{X}\right)=1$ restricted to which $\tau$ is
invertible. \end{cor}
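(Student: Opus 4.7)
The plan is to remove from $X$ all forward and backward $\tau$-orbits that meet one of two exceptional sets: $\Sigma_{\max}$, where $\tau$ is undefined, and an analogous ``minimal'' set $\Sigma_{\min}$, where $\tau^{-1}$ is undefined. By a direct mirror of the proof of Proposition~\ref{prop: Identification of adic transformation} (interchanging the roles of maximum and minimum under $\prec_{rev}$ and of the extremal digits $0$ and $[\beta]$), $\Sigma_{\min}$ is measurable and admits an explicit description via a tail condition on the $\beta$-expansion, together with a formula for $\tau^{-1}$ on $X \setminus \Sigma_{\min}$.

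The next step is to show both $\Sigma_{\max}$ and $\Sigma_{\min}$ are Lebesgue-null. Since the $T_\beta$-invariant density $h$ is bounded above and below away from zero, $m$ and $\Leb$ share null sets. Applying the Birkhoff ergodic theorem in the ergodic system $(X,\mathcal{B},m,T_\beta)$ to the indicator of $[0, 1 - 1/\beta)$, whose $m$-measure is strictly positive, shows that $m$-a.e.\ point visits this set infinitely often; hence $\Sigma_{\max}$ (the set of points never visiting it) has $m$-measure zero and thus $\Leb$-measure zero. An analogous Birkhoff argument, applied to the positive-measure set dictated by the mirror analysis, gives $\Leb(\Sigma_{\min}) = 0$.

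It then remains to check that $\tau$ is non-singular with respect to $\Leb$. From the formula in Proposition~\ref{prop: Identification of adic transformation}, $\tau(x) - x$ depends only on $n_0(x)$ and the first $n_0(x)+1$ digits of $x$, so $\tau$ is a translation on each of the countably many cylinders determined by these data, and is therefore $\Leb$-preserving on its domain. Taking
\[
\hat{X} := X \setminus \bigcup_{n \in \mathbb{Z}} \tau^n(\Sigma_{\max} \cup \Sigma_{\min}),
\]
with the iterates interpreted as partial functions on their natural domains, non-singularity and the null-ness of $\Sigma_{\max} \cup \Sigma_{\min}$ imply each term in this countable union is $\Leb$-null, so $\Leb(\hat{X}) = 1$; by construction $\hat{X}$ is invariant under both $\tau$ and $\tau^{-1}$, and $\tau|_{\hat X}$ is a bijection. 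The principal obstacle is the mirror identification of $\Sigma_{\min}$ and the admissibility bookkeeping in the piecewise-translation description of $\tau$; given these, the closing measure-theoretic step is routine.
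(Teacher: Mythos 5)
Your argument is correct in substance and would yield the corollary, but it is organized differently from the paper's proof. The paper never introduces a set $\Sigma_{\min}$: it identifies the set where some forward power of $\tau$ is undefined as $\bigcup_{k}T_{\beta}^{-k}\Sigma_{max}$ and shows it is null using $m\left(\Sigma_{max}\right)=0$ (your Birkhoff step) together with the $T_{\beta}$-invariance of $m$, thereby avoiding any appeal to non-singularity of $\tau$; surjectivity is then obtained by an explicit predecessor construction (decrement the first nonzero digit) after deleting the tail class of $\bar{0}$. Your route instead propagates nullity along two-sided $\tau$-orbits, which forces you to establish non-singularity of $\tau$ first; you do this via the piecewise-translation structure, which is essentially the content of the paper's \emph{next} proposition ($\tau$ preserves $\Leb$), so your proof front-loads that fact. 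Two caveats. First, the pieces on which $\tau$ is a translation are sets of the form $\left\{ n_{0}=n\right\} \cap\left[d_{1},...,d_{n}\right]$, and $\left\{ x:T_{\beta}^{n}x<1-\frac{1}{\beta}\right\} $ is not determined by the first $n$ digits, so these are countable unions of cylinders rather than single cylinders; this does not harm the argument but should be stated correctly. Second, and more importantly, the ``direct mirror'' is not a literal interchange of $0$ and $\left[\beta\right]$: admissibility of $\beta$-expansions is a one-sided lexicographic bound, so the two extremal digits play asymmetric roles. Decrementing the first nonzero digit of any $x\neq0$ always yields an admissible sequence, so in fact $\Sigma_{\min}=\left\{ 0\right\} $, and $\bigcup_{n\geq0}\tau^{n}\left(\Sigma_{\min}\right)$ is exactly the set of points with finite $\beta$-expansion, i.e. $\bigcup_{k\geq0}T_{\beta}^{-k}\left(\bar{0}\right)$ --- precisely the set the paper removes. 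You should verify this directly rather than appeal to a symmetry the $\beta$-shift does not possess; once that is done, your $\hat{X}$ coincides with the paper's and the rest of your closing argument is routine and correct.
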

\begin{proof}
Using proposition \ref{prop: Identification of adic transformation}
inductively, we conclude that $\tau^{n}x$ is defined for all $n\in\bbN$,
if and only if $T_{\beta}^{n}x+\frac{1}{\beta}<1$ for infinitely
many $n\in\bbN$. Letting $\Sigma_{max}$ be as in proposition \ref{prop: Identification of adic transformation},
we have the equality 
\[
\left\{ x\in X:\, T_{\beta}^{n}x+\frac{1}{\beta}<1\: for\: finitely\: many\: n\in\bbN_{*}\right\} =\bigcup_{k\in\bbN_{*}}T_{\beta}^{-k}\Sigma_{max}.
\]
Thus, all powers of $\tau$$ $are well-defined for all $x\in Y:=X\setminus\bigcup_{k\in N_{*}}T^{-k}\Sigma$
and there is some power of $\tau$ which is undefined for $x\notin Y$.
It follows that $Y$ is $\tau$-invariant. 

We show that $Y$ has full Lebesgue measure. Since $T_{\beta}$ is
ergodic and $m\left(\left[1-\beta,1\right)\right)<1$, it follows
by Birkhoff's ergodic theorem that there exists $\rho<1$ such that
for $m$ almost every $x$, and large enough $n$, $\frac{1}{n}\sum_{k=0}^{n-1}\ind_{\left[0,1-\beta\right)}<\rho$.
This shows that $m\left(\Sigma\right)=0$ and therefore, $m\left(\bigcup_{k\in\bbN_{*}}T_{\beta}^{-k}\Sigma\right)=0$.
By equivalence of the Lebesgue measure and the measure $m$, it follows
that $\Leb\left(Y\right)=1$. 

Setting $\hat{X}:=Y\setminus\bigcup_{k=0}^{\infty}T_{\beta}^{-k}\left(\bar{0}\right)$,
where $\bar{0}=\left(0,0,....\right)$ we obtain a set of full Lebesgue
measure, invariant under $\tau$ (invariance is seen by the fact that
$\tau$ parametrizes the tail relation, i.e $x$ and $\tau x$ must
be in the same equivalence class). Since it is clear from the definition
of $\tau$ that it is an injective map, to prove invertibility, it
suffices to show that $\tau:\hat{X}\rightarrow\hat{X}$ is onto. Let
$x\in\hat{X}$ and let $d\left(x,\beta\right)=\left(d_{1}\left(x\right),d_{2}\left(x\right),...\right)$
be its expansion. Let $n_{0}:=min\left\{ k:d_{k}\left(x\right)>0\right\} $.
Then $\omega=\left(d_{1}\left(x\right),...,d_{k}\left(x\right)-1,d_{k+1}\left(x\right),...\right)$
is an admissible sequence if $y\in X$ has expansion $\omega$, then
$y\prec_{rev}x$ and there are finitely many elements between $y$
and $x$ ordered by the reverse lexicographic order. It follows that
$\tau^{k}x=y$ for some $k\in\bbN$, and that $y\in\hat{X}$. Therefore,
$\tau:\hat{X}\rightarrow\hat{X}$ is onto and the proof is complete. \end{proof}
\begin{prop}
$\tau:\hat{X}\rightarrow\hat{X}$ preserves the Lebesgue measure. \end{prop}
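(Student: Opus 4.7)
The approach is to use the explicit formula in Proposition \ref{prop: Identification of adic transformation} to realize $\tau$ as a countable piecewise translation on $\hat{X}$, and then combine this with the bijectivity of $\tau$ established in the preceding corollary to deduce invariance of $\Leb$.

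The first step is to extract a translation formula from the proposition. Writing $x = \sum_{i=1}^{n_0(x)} d_i(x)/\beta^i + \sum_{i \geq n_0(x)+1} d_i(x)/\beta^i$ and summing the $\beta$-expansion of $\tau(x)$ supplied by the proposition yields
\[
\tau(x) \;=\; x \;-\; \sum_{i=1}^{n_0(x)} \frac{d_i(x)}{\beta^i} \;+\; \frac{1}{\beta^{n_0(x)+1}}.
\]
For an integer $n \in \bbN_{*}$ and a tuple $(d_1, \ldots, d_n)$, define
\[
P_{n; d_1, \ldots, d_n} \;:=\; \bigl\{ x \in \hat{X} : n_0(x) = n,\ d_i(x) = d_i \text{ for } 1 \leq i \leq n \bigr\}.
\]
Since $n_0$ is finite everywhere on $Y \supseteq \hat{X}$, these sets form a countable Borel partition of $\hat{X}$ (each is the intersection of a cylinder with finitely many preimages of $[1-\beta^{-1},1)$ and one preimage of $[0,1-\beta^{-1})$ under iterates of $T_\beta$). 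On each such piece, $\tau$ is translation by the constant $-\sum_{i=1}^n d_i/\beta^i + 1/\beta^{n+1}$, and in particular preserves $\Leb$.

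The second step assembles the piecewise statement into global invariance. For any measurable $A \subseteq \hat{X}$,
\[
\Leb(\tau^{-1}A) \;=\; \sum_{n; d_1, \ldots, d_n} \Leb\bigl(\tau^{-1}A \cap P_{n; d_1, \ldots, d_n}\bigr) \;=\; \sum_{n; d_1, \ldots, d_n} \Leb\bigl(A \cap \tau(P_{n; d_1, \ldots, d_n})\bigr),
\]
where the second equality uses that $\tau$ restricted to each $P_{n; d_1, \ldots, d_n}$ is a $\Leb$-preserving translation onto its (Borel) image. Bijectivity of $\tau:\hat{X} \to \hat{X}$ then forces $\{\tau(P_{n; d_1, \ldots, d_n})\}$ to be a countable measurable partition of $\hat{X}$, so the last sum collapses to $\Leb(A)$.

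I do not expect any genuine obstacle: once the translation formula is in hand, everything reduces to measurability and disjointness bookkeeping together with countable additivity. The only point needing even mild care is checking that the images $\tau(P_{n; d_1, \ldots, d_n})$ inherit measurability and pairwise disjointness, both of which follow immediately from the fact that each restriction $\tau|_{P_{n; d_1, \ldots, d_n}}$ is a translation and that $\tau$ is a bijection of $\hat{X}$.
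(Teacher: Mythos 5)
Your proof is correct, but it takes a genuinely different route from the paper. You exploit the arithmetic of $\beta$-expansions to realize $\tau$ as a countable piecewise translation: on each piece $P_{n;d_{1},\ldots,d_{n}}$ the map is $x\mapsto x+c$ with $c=-\sum_{i=1}^{n}d_{i}/\beta^{i}+1/\beta^{n+1}$, and measure preservation then follows from translation invariance of $\Leb$ together with countable additivity and the bijectivity of $\tau$ on $\hat{X}$. The paper instead argues combinatorially: using Lemma \ref{lem: Full Intervals Lemma} it finds full cylinders $A\ni x$ and $B\ni\tau x$ of equal rank, shows $\tau(A)=B$, and invokes the fact that full cylinders of equal rank have equal Lebesgue measure $\beta^{-\tilde{n}}$. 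Your version buys two things: it is self-contained (no appeal to the full-cylinder covering lemma or to the equal-measure property of full cylinders), and it turns the paper's somewhat informal local-to-global step (``it suffices to find, for each $x$, one cylinder on which $\tau$ preserves measure'') into an honest computation via a countable measurable partition whose images partition $\hat{X}$. The paper's approach, in exchange, reveals that $\tau$ permutes full cylinders, and it is the argument that survives in settings (general adic/Bratteli constructions) where succession in reverse lexicographic order has no interpretation as addition of a real constant. The only points in your write-up that deserve explicit mention — measurability of each $P_{n;d_{1},\ldots,d_{n}}$ (an intersection of a cylinder with finitely many $T_{\beta}$-preimages of $[1-\beta^{-1},1)$ and one of $[0,1-\beta^{-1})$), finiteness of $n_{0}$ on $\hat{X}$, and the fact that $\tau(P)=P+c$ is Borel — are all addressed, so I see no gap.
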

\begin{proof}
Let $x\in\hat{X}$. Since the set of all cylinders $\left\{ \Delta_{n}\right\} _{n=1}^{\infty}$
generates $\mathcal{B}$, it suffices to prove that there exists a
cylinder $\Delta_{n}$ such that $x\in\Delta_{n}$, and $\Leb\left(\tau\left(\Delta_{n}\right)\right)=\Leb\left(\Delta_{n}\right)$.
Let 
\[
n_{0}=\min\left\{ T^{n}x+\frac{1}{\beta}<1\right\} .
\]
Then by proposition \ref{prop: Identification of adic transformation}
\[
\tau x=\left(\left[0\right]_{n_{0}},d_{n_{0}+1}\left(x\right)+1,d_{n_{0}+2}\left(x\right),d_{n_{0}+3}\left(x\right),...\right).
\]
Fix $n_{1}>n_{0}+1$. By lemma \ref{lem: Full Intervals Lemma}, there
exist two cylinders $A$, $B$ of rank $n_{1}$ or $n_{1}+1$ such
that $x\in A$, $\tau x\in B$. If the ranks are different, by concatenating
the last symbol of the longer cylinder to the shorter cylinder we
obtain two full cylinders of equal rank. Doing this will not change
the fact that $x\in A$ and $\tau x\in B$, because the above formula
for $\tau x$ shows that the digits in the expansions of $x$ and
$\tau x$ coincide for the index $n_{1}+1$. Therefore, without loss
of generality, we may assume that both $A$ and $B$ are full cylinders
of rank $\tilde{n}>n_{0}+1$. It follows that 
\[
A=\left(d_{1}\left(x\right),d_{2}\left(x\right),...,d_{\tilde{n}}\left(x\right)\right)
\]
 and 
\[
B=\left(\left[0\right]_{n_{0}},d_{n_{0}+1}\left(x\right)+1,d_{n_{0}+2}\left(x\right),...,d_{\tilde{n}}\left(x\right)\right).
\]
Therefore, proposition \ref{prop: Identification of adic transformation}
shows that $y\in A\implies\tau y\in B$ and it is easy to see by definition
of the reverse lexicographic order that $\tau^{-1}\left(B\right)=A$.
Thus, $\tau\left(A\right)=B$ and since full intervals of equal rank
have same Lebesgue measure the claim follows.
\end{proof}

\section{\label{sec:Random-Walk-Adic}Random Walk Adic Transformation Associated
with an Aperiodic Random Walk for the Beta Transformation. }

Let $G=\bbR$ or $G=\bbZ$ and let $\varphi:X\rightarrow G$. The
random walk over the $\beta$-transformation generated by $f$ is
the skew product 
\[
\left(X\times G,\mathcal{B}\left(X\right)\times\mathcal{B}\left(G\right),\tilde{m},\sigma_{\varphi}\right)
\]
 where $\tilde{m}:=m\times dy$, $dy$ is the Haar measure on $G$
and $\sigma_{\varphi}\left(x,y\right)=\left(T_{\beta}x,y+\varphi\left(x\right)\right)$.
In what follows, Birkhoff sums of the form $\sum_{k=0}^{n-1}\varphi\left(T_{\beta}^{k}x\right)$
will be denoted by $\varphi_{n}\left(x\right)$. 

Similarly to how the adic transformation $\tau$ parametrizes the
tail relation of $T_{\beta}$, the random walk adic transformation
associated to $\sigma_{\varphi}$ is the (unique) skew product over
$\left(\hat{X},\mathcal{B}\bigcap\hat{X},\Leb\right)$, which parametrizes
the tail relation of $\sigma_{\varphi}$. To identify this note that
the tail relation of $\sigma_{\varphi}$ is given by 
\[
\mathcal{T}\left(\sigma_{\varphi}\right)=\left\{ \left(x,y\right)\times\left(x',y'\right):\left(x,x'\right)\in\mathcal{T}\left(T_{\beta}\right),\,\exists n_{0}\forall n>n_{0}\ y+\varphi_{n}\left(x\right)=y'+\varphi_{n}\left(x'\right)\right\} .
\]
Now let $\left(x,y\right)\times\left(x',y'\right)\in\mathcal{T}\left(\sigma_{\varphi}\right)$.
Since $\left(x,x'\right)\in\mathcal{T}\left(T_{\beta}\right)$, it
follows that there exists $n$ such that $T_{\beta}^{n-1}\left(x\right)=T_{\beta}^{n-1}\left(y\right)$.
Therefore, 
\[
y+\varphi_{k}\left(x\right)=y'+\varphi_{k}\left(x'\right)
\]
 for all $k$ greater than some $K\in\bbN$, if and only if $y+\varphi_{n}\left(x\right)=y'+\varphi_{n}\left(x'\right)$.
It follows that 
\[
\left(x,y\right)\times\left(x',y'\right)\in\mathcal{T}_{\sigma_{\varphi}}
\]
 if and only if 
\[
\left(x,x'\right)\in\mathcal{T}\left(T_{\beta}\right)
\]
 and 
\[
y'=y+\psi\left(x,x'\right),
\]
where $\psi\left(x,x'\right)=\sum_{k=0}^{\infty}\varphi\left(T_{\beta}^{k}x\right)-\varphi_{k}\left(T_{\beta}^{k}x'\right)$.
It follows that for $\left(x,y\right)\in\hat{X}\times\bbR^{d}$, 
\[
\left(\tau x,y+\phi\left(x\right)\right)\sim_{\mathcal{T}\left(\sigma_{\varphi}\right)}\left(x,y\right)
\]
 if and only if $\phi\left(x\right):=\psi\left(x,\tau x\right)$.
Thus, we define the random walk adic transformation as follows. 
\begin{defn}
The random walk adic transformation associated to $\sigma_{\varphi}$
is the skew product 
\[
\left(\hat{X}\times G,\left(\mathcal{B}\bigcap\hat{X}\right)\times\mathcal{B}\left(G\right),\mu,\tau_{\varphi}\right),
\]
where $\mbox{\ensuremath{\mu}}=\Leb\times dy$, $\lambda$ is the
Lebesgue measure on $X$ restricted to $\hat{X}$, $dy$ is the Haar
measure on $G$ and 
\[
\tau_{\varphi}\left(x,y\right)=\left(\tau x,y+\phi\left(x\right)\right)
\]
where 
\[
\phi\left(x\right):=\psi\left(x,\tau x\right)=\sum_{k=0}^{\infty}\varphi\left(T^{k}x\right)-\varphi\left(T^{k}\left(\tau x\right)\right).
\]

\end{defn}
Note that since $\tau$ is invertible on $\hat{X}$, $\tau_{\varphi}$
is invertible and by the arguments above, for $\left(x,y\right),\left(x',y'\right)\in\hat{X}\times\bbR^{d}$,
$\left(x,y\right)\sim_{\mathcal{T}\left(\sigma_{f}\right)}\left(x',y'\right)$
if and only if $\tau_{\varphi}^{n}\left(x,y\right)=\left(x',y'\right)$
for some $n\in\bbZ$.

Denote by $\hat{G}$ the dual group of $G$.
\begin{defn}
A measurable function $\varphi:X\rightarrow G$ is aperiodic if the
only solutions the equation $\gamma\circ\varphi=\frac{\lambda g}{g\circ T}$
$m$-a.e, with $\gamma\in\hat{G}$, $\left|\lambda\right|=1$ and
a measurable $g:X\rightarrow\mathbb{S}^{1}$ are $\gamma\equiv1$,
$\lambda=1$ and $g$ is an a.e constant function. 
\end{defn}
We say that the random walk over the $\beta$-transformation is aperiodic,
if it is generated by an aperiodic function $\varphi$. Aperiodicity
is crucial for proving exactness and local limit theorems for the
skew product $\left(X\times\bbR,\mathcal{B}\left(X\right)\times\mathcal{B}\left(G\right),\tilde{m},\sigma_{\varphi}\right)$.$ $
In order for these to hold, in addition to aperiodicity, we must make
further regularity assumptions on the function $\varphi$, namely
we need to restrict $\varphi$ to a Banach space, on which the associated
transfer operator (also known as the Ruelle-Frobenius-Perron operator)
acts quasi-compactly. This is the goal of the following section. All
relevant definitions are provided therein.

\section{\label{sec:Assumptions-on-the-observable}Assumptions on the observable
$\varphi$ and implications}

The results of this section appear in \cite{ADSZ} where they are
proved in a more general context of piecewise monotonic, expanding
maps of the interval. We list the results relevant to our case. 

For an interval $A\subseteq X$, and $f:A\rightarrow\bbR$, define
the variation of $f$ on $A$ to be $var_{f}\left(A\right):=\sup\sum_{i}\left|f\left(x_{i}\right)-f\left(x_{i-1}\right)\right|$
where the supremum is taken over all finite partitions 
\[
x_{1}<x_{2}<...<x_{n}
\]
 of $A$. For$f\in L^{1}\left(m\right)$ set 
\[
\bigvee_{A}f:=\inf\left\{ var_{f*}\left(A\right):f^{*}=f\ a.e\right\} .
\]
For $f\in L^{\infty}\left(m\right)$, define 
\[
\left\Vert f\right\Vert _{BV}:=\left\Vert f\right\Vert _{\infty}+\bigvee_{X}f
\]
 and let 
\[
BV:=\left\{ f\in L^{\infty}\left(m\right):\left\Vert f\right\Vert _{BV}<\infty\right\} 
\]
The space $BV$ endowed with the norm $\left\Vert \cdot\right\Vert _{BV}$
is a Banach space. 

We will also be interested in functions of bounded variation on each
element of the natural partition of the unit interval for the $\beta$-transformation.
This partition corresponds to the partition $\left\{ \left[1\right],...,\mbox{\ensuremath{\left[\beta\right]}}\right\} $
of the associated $\beta$-shift and is given by 
\[
\alpha=\left\{ \left[0,\frac{1}{\beta}\right),\left[\frac{1}{\beta},\frac{2}{\beta}\right)...,\left[\frac{\left[\beta\right]}{\beta},1\right)\right\} .
\]
We say that $\varphi:X\rightarrow\bbR$ is locally of bounded variation
on $\alpha$, if 
\[
C_{\varphi,\alpha}:=\sup_{A\in\alpha}\bigvee_{A}\varphi<\infty.
\]
Note that since $\alpha$ is a finite partition, $C_{\varphi,\alpha}<\infty$
implies that $\varphi$ is bounded. 

Recall that for a non-singular dynamical system $\left(Y,\mathcal{C},\mu,T\right)$
the transfer operator is an operator $\hat{T}:L^{1}\left(\mu\right)\rightarrow L^{1}\left(\mu\right)$,
uniquely defined by the equality 
\[
\int\limits _{X}f\circ\hat{T}\cdot g\, d\mu=\int\limits _{X}f\cdot g\circ T\, d\mu
\]
for every $f\in L^{1}\left(m\right)$, $g\in L^{\infty}\left(m\right)$.
Let $\hat{T}_{\beta}$ be the transfer operator of $\left(X,\mathcal{B},m,T_{\beta}\right)$. 

In what follows we also need the transfer operator $\mathcal{L}:L^{1}\left(m\right)\rightarrow L^{1}\left(m\right)$,
defined by 
\[
\left(\mathcal{L}f\right)\left(x\right)=\sum_{y\in T_{\beta}^{-1}\left(x\right)}f\left(x\right).
\]
Note that $\mathcal{L}f\left(x\right)$ is finite for almost every
$x\in X$, and $\mathcal{L}f=\mathcal{L}\tilde{f}\:\mod m$ if $f=\tilde{f}\:\mod m$.
The operator $\mathcal{L}$ is also referred to as the transfer operator
(or the Ruelle-Frobenius-Perron operator) and may be used to obtain
the $T_{\beta}$ invariant density $h$ (see \cite{Wa}). We have
that$\beta$ is an eigenvalue of $\mathcal{L}$ corresponding to the
function $h$, i.e $\mathcal{L}h=\beta h$ and the operator $\mathcal{L}$
and $\hat{T}$ are related by (see \cite[Lemma 11]{Wa}) 
\[
\mathcal{L}\left(f\right)=\beta h\cdot\hat{T}\left(\frac{f}{h}\right)\:\forall f\in L^{1}\left(m\right).
\]

\begin{defn}
An operator $G$ on a Banach space $B$ is called quasi-compact with
$s$ dominating simple eigenvalues if \end{defn}
\begin{enumerate}
\item There exist $G$-invariant spaces $F$ and $H$ such that $F$ is
an $s$ dimensional space and $B=F\oplus H$. 
\item $G$ is diagonizable when restricted to $F$ with all eigenvalues
having modulus equal to the spectral radius of $G$, denoted by $\rho\left(G\right)$.
\item When restricted to $H$, the spectral radius of $G$ is strictly less
than $\rho\left(G\right)$.\end{enumerate}
\begin{defn}
The fact that $\hat{T}_{\beta}$ is a quasi-compact operator on $BV$
with one simple dominating eigenvalue $1$ and corresponding eigenspace
of constant functions is proved in \cite{ADSZ}. Thus, $\hat{T}_{\beta}$
has the form $\hat{T}_{\beta}=m\left(f\right)\ind+Q$ where the spectral
radius of $Q:\mathcal{B}\rightarrow\mathcal{B}$ satisfies $\rho\left(Q\right)<1$
and $m\circ Q=Q\ind=0$. 

The characteristic function operator associated to a measurable function
$\varphi:X\rightarrow\bbR$ is a family of operators $P\left(t\right):L^{1}\left(m\right)\rightarrow L^{1}\left(m\right)$,
$t\in\bbR$ defined by 
\[
P\left(t\right)f=\hat{T}_{\beta}\left(e^{it\varphi}f\right).
\]

Let $\varphi:X\rightarrow\bbR$ be such that $C_{\varphi,\alpha}<\infty$.
Then for $t\in\bbR$, $P\left(t\right):BV\rightarrow BV$ is quasi-compact
and $P\left(t\right)$ is twice continuously differentiable as a function
from $\bbR$ to $Hom\left(BV,BV\right)$. It follows from operator
perturbation theory (see \cite{ADSZ}) that there exists a $\delta$
neighborhood of $0$, such that for $\left|t\right|<\delta$, $P\left(t\right)$
is quasi-compact with a simple dominating eigenvalue $\lambda\left(t\right)$,
where $\lambda\left(t\right)$ has Taylor's expansion at $0$ of the
form 
\[
\lambda\left(t\right)=1+im\left(\varphi\right)-\sigma^{2}t^{2}+o\left(t^{2}\right),\ \sigma\geq0.
\]
Moreover, $\sigma^{2}=\lim_{n\rightarrow\infty}\frac{1}{n}Var_{m}\left(\phi_{n}\right)$
(here $Var\left(\varphi_{n}\right)$ denotes the variance of the sum
$\varphi_{n}$) and $\sigma^{2}=0$ if and only if $\varphi$ is a
coboundary, i.e of the form $\varphi=f\circ T_{\beta}-f$ for some
measurable $f:X\rightarrow\bbR$. As a consequence of that, exactness,
conditional central limit theorems and conditional local theorems
for the skew product $\left(X\times G,\mathcal{B}\left(X\right)\times\mathcal{B}\left(G\right),\tilde{m},\sigma_{\varphi}\right)$
can be obtained (see \cite{ADSZ}, \cite{AD}, \cite{HH}). We list
these results. 

Through the rest of this paper we assume that $\varphi:X\rightarrow G$,
$G=\bbR$ or $G=\bbZ$, $C_{\varphi,\alpha}<\infty$, $\varphi$ is
aperiodic and $\lim_{n\rightarrow\infty}\frac{1}{n}Var_{m}\left(\varphi_{n}\right)=\sigma^{2}>0$.

Recall that a non-singular transformation on a standard probability
space $\left(Y,\mathcal{C},\mu,T\right)$ is exact if the tail $\sigma-$field
of $T$ defined by $\mathcal{T}\left(T\right):=\bigcap\limits _{n=1}^{\infty}T^{-n}\mathcal{C}$
is trivial, i.e $\mathcal{T}\left(T\right)=\left\{ \emptyset,Y\right\} $. \end{defn}
\begin{thm}
\cite[Theorem 7]{ADSZ} If $\varphi:X\rightarrow G$ where $G=\bbR$
or $G=\bbZ$ is aperiodic and $C_{\varphi,\alpha}<\infty$, then the
skew product $\left(X\times G,\mathcal{B}\left(X\right)\times\mathcal{B}\left(G\right),\tilde{m},\sigma_{\varphi}\right)$
is exact.\end{thm}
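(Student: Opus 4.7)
The plan is to prove exactness through the Fourier-analytic approach, using the family of characteristic function operators $P(t)$ introduced earlier. I rely on the standard criterion that, for a conservative nonsingular transformation of a $\sigma$-finite space, exactness is equivalent to $\|\hat\sigma_\varphi^n F\|_{L^1(\tilde m)} \to 0$ for every $F \in L^1(\tilde m)$ with $\int F\, d\tilde m = 0$. By density and linearity it suffices to establish this on the dense class of product observables $F(x,y) = f(x) g(y)$ with $f \in BV$ and $g$ smooth and compactly supported (respectively Schwartz when $G = \bbR$), subject to a mean-zero hypothesis on one of the factors.

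A direct duality computation yields $(\hat\sigma_\varphi F)(x,y) = \hat T_\beta\bigl[F(\cdot, y - \varphi(\cdot))\bigr](x)$. Inserting the Fourier inversion $g(y) = \int_{\hat G} \hat g(t)\, \chi_t(y)\, dt$ and iterating gives
\[
\hat\sigma_\varphi^n F(x,y) = \int_{\hat G} \hat g(t)\, [P(-t)^n f](x)\, \chi_t(y)\, dt,
\]
hence by Fubini
\[
\|\hat\sigma_\varphi^n F\|_{L^1(\tilde m)} \leq \int_{\hat G} |\hat g(t)|\, \|P(-t)^n f\|_{L^1(m)}\, dt.
\]
The proof thus reduces to showing that $\|P(t)^n f\|_{L^1(m)} \to 0$ pointwise in $t$, together with a dominating $L^1(\hat G)$-function to permit dominated convergence.

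The spectral side exploits aperiodicity. Aperiodicity of $\varphi$ is precisely the assertion that $P(t)$ has no eigenvalue of modulus equal to its spectral radius other than the trivial one at $t=0$; combined with quasi-compactness of $P(t)$ on $BV$ this forces $\rho(P(t)) < 1$ for every $t \in \hat G \setminus \{0\}$, so $\|P(t)^n\|_{BV \to BV} \to 0$ for each such $t$. The value $t = 0$ is absorbed by the mean-zero condition: either $\hat g(0) = 0$, or $\int f\, dm = 0$, in which case $P(0)^n f = \hat T_\beta^n f = Q^n f \to 0$ by quasi-compactness of $\hat T_\beta$ on $BV$.

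The main obstacle is to assemble a uniform dominating function on $\hat G$, which I would do by splitting the $t$-integral into three zones. Near $t = 0$, the perturbative expansion $\lambda(t) = 1 - \sigma^2 t^2 + o(t^2)$ yields $|\lambda(t)|^n \leq e^{-c n t^2}$ together with an exponentially decaying residual, giving an integrable bound. On an annulus $\delta \leq |t| \leq R$, continuity of $t \mapsto P(t)$ into $\mathrm{Hom}(BV,BV)$ and compactness upgrade the pointwise spectral gap into a uniform one $\|P(t)^n\|_{BV} \leq C\rho^n$ for some $\rho < 1$. For $G = \bbR$ and $|t| > R$, the uniform operator bound $\|P(t)\|_{BV \to BV} \leq M$, which follows from $C_{\varphi,\alpha} < \infty$, combined with the integrable decay of $\hat g$ at infinity, renders the tail integral small uniformly in $n$. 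Dominated convergence then delivers $\|\hat\sigma_\varphi^n F\|_{L^1(\tilde m)} \to 0$ for our dense class, and exactness follows by density.
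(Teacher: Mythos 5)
The paper offers no proof of this theorem; it is quoted verbatim from \cite[Theorem 7]{ADSZ}. Your skeleton --- Lin's criterion for exactness, the characteristic function operators $P(t)$, and aperiodicity combined with quasi-compactness forcing $\rho\left(P\left(t\right)\right)<1$ for $t\neq0$ --- is indeed the standard frame for such results, and your spectral ingredients (perturbation expansion near $t=0$, uniform gap on compact annuli by continuity of $t\mapsto P(t)$, the contraction bound for large $t$) are all sound. The fatal step is the displayed inequality
\[
\left\Vert \hat{\sigma}_{\varphi}^{n}F\right\Vert _{L^{1}\left(\tilde{m}\right)}\leq\int_{\hat{G}}\left|\hat{g}\left(t\right)\right|\left\Vert P\left(-t\right)^{n}f\right\Vert _{L^{1}\left(m\right)}dt.
\]
Pulling the absolute value inside the Fourier integral bounds $\left|\hat{\sigma}_{\varphi}^{n}F\left(x,y\right)\right|$ by a quantity independent of $y$; integrating that bound over the fibre $G=\bbR$ or $\bbZ$, whose Haar measure is infinite, gives $+\infty$, not the right-hand side. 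What you have actually controlled is $\sup_{y}\int_{X}\left|\hat{\sigma}_{\varphi}^{n}F\left(x,y\right)\right|dm\left(x\right)$, an $L^{\infty}\left(dy\right)$-type quantity, whereas Lin's criterion demands the full $L^{1}\left(dy\right)$ norm; the inverse Fourier transform of an $L^{1}\left(\hat{G}\right)$ function need not lie in $L^{1}\left(G\right)$, so no Fubini or triangle-inequality argument yields your bound.

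This is not a repairable technicality within your scheme. Since $\varphi$ is bounded, $\hat{\sigma}_{\varphi}^{n}\left(f\otimes g\right)\left(x,\cdot\right)=\hat{T}_{\beta}^{n}\left(f\cdot g\left(\cdot-\varphi_{n}\right)\right)\left(x\right)$ is supported in a $y$-interval of length $O\left(n\right)$, while the best uniform pointwise bound available (theorem \ref{thm:Upper bound LLT - continuous}) is $O\left(n^{-1/2}\right)$; the crude product of the two diverges. To conclude one must exploit the cancellation coming from $\int F\,d\tilde{m}=0$ \emph{inside} the $y$-integral, i.e. one needs local limit type estimates whose error terms are controlled uniformly in $x$ and summably (or integrably) in $y$ --- this is precisely the extra machinery of conditional local limit theorems that \cite{ADSZ} develop before deducing exactness, and it is the part your argument omits. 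As written, your proof establishes only the decay of $\sup_{y}\left\Vert \hat{\sigma}_{\varphi}^{n}F\left(\cdot,y\right)\right\Vert _{L^{1}\left(m\right)}$ for mean-zero $F$, which does not imply triviality of the tail $\sigma$-field.
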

\begin{cor}
If $\varphi:X\rightarrow G$ is aperiodic and $C_{\varphi,\alpha}<\infty$
then the random walk adic transformation $\left(\hat{X}\times G,\left(\mathcal{B}\bigcap\hat{X}\right)\times\mathcal{B}\left(G\right),\mu,\tau_{\varphi}\right)$
is conservative and ergodic. $ $\end{cor}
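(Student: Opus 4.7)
The plan is to derive ergodicity of $\tau_{\varphi}$ from the exactness of $\sigma_{\varphi}$ granted by the preceding theorem, and then to deduce conservativity from ergodicity by a soft Hopf decomposition argument.

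For the ergodicity step I would argue as follows. By construction $\tau_{\varphi}$ parametrizes the tail equivalence relation $\mathcal{T}\left(\sigma_{\varphi}\right)$ on $\hat{X}\times G$, i.e.\ two points lie in the same $\tau_{\varphi}$-orbit if and only if they are tail equivalent under $\sigma_{\varphi}$. Consequently a measurable set $A\subseteq\hat{X}\times G$ is $\tau_{\varphi}$-invariant if and only if $A$ is $\mathcal{T}\left(\sigma_{\varphi}\right)$-saturated, which is in turn equivalent (modulo $\tilde{m}$-null sets) to $A$ lying in the tail $\sigma$-algebra $\bigcap_{n\geq0}\sigma_{\varphi}^{-n}\left(\mathcal{B}\left(X\right)\times\mathcal{B}\left(G\right)\right)$. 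By the exactness theorem this $\sigma$-algebra is $\tilde{m}$-trivial; and since $m$ is equivalent to $\Leb$ on $X$, the measures $\tilde{m}$ and $\mu$ are mutually equivalent, so every $\tau_{\varphi}$-invariant set is $\mu$-trivial, proving ergodicity.

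For conservativity, $\tau_{\varphi}$ is an invertible measure preserving transformation of the atomless $\sigma$-finite space $\left(\hat{X}\times G,\mu\right)$, so its Hopf decomposition yields $\tau_{\varphi}$-invariant conservative and dissipative parts; by ergodicity one of the two carries all the mass. The dissipative alternative can be ruled out: otherwise the space would decompose as a countable disjoint union $\bigsqcup_{n\in\bbZ}\tau_{\varphi}^{n}W$ of iterates of a wandering set $W$ with $\mu\left(W\right)>0$, and an arbitrary measurable function on $W$ could be extended to a $\tau_{\varphi}$-invariant function, which ergodicity would force to be $\mu$-a.e.\ constant; this contradicts the atomlessness of $\mu$ restricted to $W$. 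Therefore $\tau_{\varphi}$ is conservative.

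The main obstacle, such as it is, sits inside the ergodicity step, namely the careful identification of $\tau_{\varphi}$-invariant sets with $\mathcal{T}\left(\sigma_{\varphi}\right)$-saturated sets modulo null sets. All the analytic work (quasi-compactness, aperiodicity, exactness of $\sigma_{\varphi}$) has been done in \cite{ADSZ}, so what remains here is essentially bookkeeping on the skew product structure.
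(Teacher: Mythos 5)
Your proposal is correct and follows essentially the same route as the paper: ergodicity is deduced from exactness of $\sigma_{\varphi}$ via the observation that $\tau_{\varphi}$-invariant sets are tail-saturated and hence lie in the (trivial) tail $\sigma$-field, and conservativity then follows because an invertible, ergodic, measure preserving transformation of a non-atomic $\sigma$-finite space is conservative. The only difference is that you spell out the wandering-set argument for this last standard fact, whereas the paper simply cites \cite[Proposition 1.2.1]{Aa1}.
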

\begin{proof}
Ergodicity follows from exactness of the skew product $\left(X\times G,\mathcal{B}\left(X\right)\times\mathcal{B}\left(G\right),\tilde{m},\sigma_{\varphi}\right)$.
Indeed, since $\tau_{\varphi}$ parametrizes the tail relation of
$\left(X\times G,\mathcal{B}\left(X\right)\times\mathcal{B}\left(G\right),\tilde{m},\sigma_{\varphi}\right)$
any $\tau_{\varphi}$ invariant subset must be in the tail $\sigma$-field
of $\sigma_{\varphi}$. Conservativity follows since $\tau_{\varphi}$
is invertible and ergodic (see \cite[Proposition 1.2.1]{Aa1}). \end{proof}
\begin{thm}
\cite[Theorem 9(1)]{ADSZ}(CLT) For an interval $I\subseteq\bbR$,
\[
\hat{T}_{\beta}^{n}\left(\ind_{\left\{ \frac{\bar{\varphi}_{n}}{\sigma\sqrt{n}}\in I\right\} }\right)\left(x\right)\underset{n\rightarrow\infty}{\longrightarrow}\frac{1}{\sqrt{2\pi}}\int\limits _{I}e^{-\frac{t^{2}}{2}}dt,
\]
uniformly in $x\in X$. In particular 
\[
m\left(\ind_{\left\{ \frac{\varphi_{n}}{\sqrt{n}}\in I\right\} }\right)\longrightarrow\frac{1}{\sqrt{2\pi}}\int\limits _{I}e^{-\frac{t^{2}}{2}}dt
\]
. \end{thm}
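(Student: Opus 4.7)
The plan is to apply the Nagaev--Guivarc'h perturbation method to the characteristic function operators $P(t)$. The starting point is the identity
\[
\hat{T}_{\beta}^{n}\left(e^{it\varphi_{n}}f\right)=P(t)^{n}f,
\]
valid for all $f\in L^{1}(m)$, $t\in\bbR$ and $n\in\bbN$, which I would prove by induction on $n$, using the transfer operator identity $\hat{T}_{\beta}(g\cdot h\circ T_{\beta})=h\cdot\hat{T}_{\beta}g$ to peel off one factor of $e^{it\varphi\circ T_{\beta}^{k}}$ at each step.

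With this identity in hand, the next step is to substitute $t\mapsto t/(\sigma\sqrt{n})$ and feed in the quasi-compact decomposition $P(t)=\lambda(t)\Pi(t)+N(t)$ on $BV$, valid for $|t|<\delta$, where $\Pi(t)$ is a rank-one eigenprojection and $\|N(t)^{n}\|_{BV}\leq Cr^{n}$ for some $r<1$, uniformly for $t$ in a neighbourhood of $0$. For $n$ sufficiently large,
\[
P\bigl(t/(\sigma\sqrt{n})\bigr)^{n}f=\lambda\bigl(t/(\sigma\sqrt{n})\bigr)^{n}\Pi\bigl(t/(\sigma\sqrt{n})\bigr)f+N\bigl(t/(\sigma\sqrt{n})\bigr)^{n}f.
\]
Using the Taylor expansion $\lambda(s)=1+ism(\varphi)-\tfrac{1}{2}\sigma^{2}s^{2}+o(s^{2})$ (passing to the centered sum $\bar{\varphi}_{n}=\varphi_{n}-nm(\varphi)$ so the linear term drops out), one gets $\lambda\bigl(t/(\sigma\sqrt{n})\bigr)^{n}\to e^{-t^{2}/2}$. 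Continuity of $t\mapsto\Pi(t)$ at $0$ yields $\Pi\bigl(t/(\sigma\sqrt{n})\bigr)f\to\Pi(0)f=m(f)\cdot\ind$ in $BV$, and hence uniformly in $x\in X$ via the embedding $BV\hookrightarrow L^{\infty}$; the remainder $N\bigl(t/(\sigma\sqrt{n})\bigr)^{n}f$ vanishes uniformly. Setting $f=\ind$ yields
\[
\hat{T}_{\beta}^{n}\bigl(e^{it\bar{\varphi}_{n}/(\sigma\sqrt{n})}\bigr)(x)\longrightarrow e^{-t^{2}/2}
\]
as $n\to\infty$, uniformly in $x\in X$ and uniformly for $t$ in compact subsets of $\bbR$.

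The final step is to convert this uniform convergence of characteristic functions into the claimed convergence of $\hat{T}_{\beta}^{n}(\ind_{\{\bar{\varphi}_{n}/(\sigma\sqrt{n})\in I\}})(x)$. Define $F_{n}(x,u):=\hat{T}_{\beta}^{n}\bigl(\ind_{\{\bar{\varphi}_{n}/(\sigma\sqrt{n})\leq u\}}\bigr)(x)$; for each $x$ this is a sub-distribution function on $\bbR$ whose Fourier transform is exactly the quantity above. L\'evy's continuity theorem, applied pointwise in $x$, gives weak convergence $F_{n}(x,\cdot)\Rightarrow\Phi$ to the standard Gaussian. Since $\Phi$ is continuous and the characteristic function convergence is uniform in $x$, the classical P\'olya argument upgrades this to weak convergence uniform in $x$, which in particular gives the stated limit for every interval $I$.

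The main obstacle is the analytic input, namely establishing quasi-compactness of $P(t)$ on $BV$ for $t$ in a neighbourhood of $0$, together with the simplicity of the leading eigenvalue $\lambda(t)$ and the Taylor expansion above. This rests on a Lasota--Yorke inequality for $P(t)$, for which the finiteness of $C_{\varphi,\alpha}$ is essential (it ensures $e^{it\varphi}$ has bounded variation on each element of $\alpha$), combined with aperiodicity of $\varphi$ to rule out other unimodular eigenvalues of $P(0)$ and hence, by perturbation, of $P(t)$ for small $|t|$. All of these ingredients are carried out in \cite{ADSZ} in the broader context of piecewise monotonic expanding maps.
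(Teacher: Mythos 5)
The paper offers no proof of this statement---it is quoted directly from \cite[Theorem 9(1)]{ADSZ}---and your sketch is exactly the Nagaev--Guivarc'h spectral argument by which it is proved there (following \cite{HH}): the identity $\hat{T}_{\beta}^{n}(e^{it\varphi_{n}}f)=P(t)^{n}f$, the quasi-compact decomposition and eigenvalue expansion, and the passage from uniform convergence of the conditional characteristic functions to the conditional distribution functions via L\'evy continuity together with an Esseen-type smoothing (or subsequence) argument to keep the uniformity in $x$. You also correctly locate the genuine analytic content (the Lasota--Yorke inequality and quasi-compactness of $P(t)$ on $BV$, using $C_{\varphi,\alpha}<\infty$) as the part that must be, and in this paper is, delegated to \cite{ADSZ}.
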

\begin{rem}
Aperiodicity of $\varphi$ is not required for the CLT. 
\end{rem}
\global\long\def\newmacroname{}

\begin{thm}
\cite[Theorem 9(2)]{ADSZ}(LLT - Discrete version) Assume that $\varphi:X\rightarrow\bbZ$
is aperiodic. Then 
\[
\sigma\sqrt{n}\hat{T}_{\beta}^{n}\left(\ind_{\left\{ \varphi_{n}=k_{n}\right\} }\right)\left(x\right)\underset{n\rightarrow\infty}{\longrightarrow}\frac{1}{\sqrt{2\pi}}e^{-\frac{t^{2}}{2}},\, k_{n}\in\bbZ,\,\frac{k_{n}-nE_{m}\left(\varphi\right)}{\sigma\sqrt{n}}\rightarrow t
\]
 uniformly in $x\in X$, $t\in K$, for all $K\subseteq\bbR$ compact. 
\end{thm}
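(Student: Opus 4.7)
The plan is to use Fourier inversion on $\bbZ$ together with the spectral decomposition of the characteristic function operator $P(t)$. Since $\varphi_n$ is integer-valued,
\[
\ind_{\{\varphi_n = k\}}(x) = \frac{1}{2\pi}\int_{-\pi}^{\pi} e^{it(\varphi_n(x) - k)}\,dt.
\]
Iterating the duality $\hat{T}_\beta(g\cdot f\circ T_\beta) = f\cdot \hat{T}_\beta(g)$ gives $\hat{T}_\beta^n(e^{it\varphi_n}\cdot\ind) = P(t)^n\ind$, so
\[
\sigma\sqrt{n}\,\hat{T}_\beta^n\bigl(\ind_{\{\varphi_n = k_n\}}\bigr)(x) = \frac{\sigma\sqrt{n}}{2\pi}\int_{-\pi}^{\pi} e^{-itk_n}\,P(t)^n\ind(x)\,dt.
\]
After substituting $s = \sigma\sqrt{n}\,t$, the task reduces to showing that this integral converges uniformly in $x\in X$ and in $t$ on compacts to $\frac{1}{2\pi}\int_{\bbR} e^{-s^2/2-ist}\,ds = \frac{1}{\sqrt{2\pi}}e^{-t^2/2}$.

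I would split the integral at $|t|=\delta$, where $\delta>0$ is chosen small enough that the perturbation-theoretic decomposition $P(t)^n = \lambda(t)^n \Pi(t) + R(t)^n$ holds on $|t|<\delta$ with $\|R(t)^n\|_{BV} \le C\rho^n$ for some $\rho<1$, as provided by the material quoted from \cite{ADSZ}. For the inner part $|s|\le\delta\sigma\sqrt{n}$, I would use the Taylor expansion of $\lambda$ near $0$, combine the centering $e^{-isk_n/(\sigma\sqrt{n})}$ with the eigenvalue factor $\lambda(s/(\sigma\sqrt{n}))^n$, and pull out the mean via the hypothesis $(k_n-nE_m(\varphi))/(\sigma\sqrt{n})\to t$ to get a pointwise limit $e^{-s^2/2-ist}$ uniformly in $x$ (using continuity of $\Pi(t)\ind$ in $BV$ as $t\to 0$ and $BV\hookrightarrow L^\infty$). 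Dominated convergence, with an integrable majorant $e^{-cs^2}$ coming from a uniform bound $|\lambda(t)|\le 1 - c t^2$ on $|t|<\delta$, yields the desired Gaussian integral in the limit. The remainder term contributes $\le C\rho^n \cdot 2\delta\sigma\sqrt{n}\to 0$.

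The main obstacle is the outer part $\delta \le |t| \le \pi$, and this is exactly where aperiodicity enters. One shows that $\rho(P(t)) < 1$ strictly for every $t\in[-\pi,\pi]\setminus\{0\}$: indeed, if $\rho(P(t))=1$, quasi-compactness forces an eigenfunction $g\in BV$ with eigenvalue $\lambda$ of modulus one, $P(t)g = \lambda g$, which by duality translates into $e^{it\varphi}\cdot g\circ T_\beta = \lambda g$ $m$-a.e. Taking $\gamma\in\hat{\bbZ}$ to be $k\mapsto e^{itk}$, this is precisely the equation forbidden by aperiodicity, so $t\in 2\pi\bbZ$ and hence $t=0$ in $[-\pi,\pi]$, a contradiction. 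By continuity/upper semi-continuity of the spectral radius for the analytic family $t\mapsto P(t)$ and compactness of $\{\delta\le|t|\le\pi\}$, there exist $\rho_1<1$ and $C>0$ with $\|P(t)^n\ind\|_\infty \le C\rho_1^n$ on this set. The outer contribution is then bounded by $\sigma\sqrt{n}\cdot 2\pi\cdot C\rho_1^n \to 0$.

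Combining the two pieces gives the uniform convergence, on compact sets of $t$ and uniformly in $x\in X$, claimed in the theorem. The only subtlety beyond the routine LLT argument is the spectral-radius estimate on $\delta\le|t|\le\pi$; the fact that the integration range is the compact set $[-\pi,\pi]$ (a feature of the discrete LLT) lets one avoid any decay estimate at infinity, so the full work is concentrated in deriving $\rho_1<1$ from aperiodicity via the uniformization argument sketched above.
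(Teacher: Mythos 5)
The paper does not prove this statement at all: it is quoted verbatim from \cite[Theorem 9(2)]{ADSZ}, and the proof there is precisely the Nagaev--Guivarc'h argument you describe --- Fourier inversion over $\left[-\pi,\pi\right]$, the perturbative expansion $P(t)^{n}=\lambda(t)^{n}\Pi(t)+R(t)^{n}$ near $t=0$ from quasi-compactness of $P(t)$ on $BV$, and the spectral-radius bound on $\delta\leq\left|t\right|\leq\pi$ extracted from aperiodicity via the cohomological equation $\gamma\circ\varphi=\lambda g/\left(g\circ T\right)$. Your proposal is correct and takes essentially the same route as the cited source, with the uniformity in $x$ coming from the embedding $BV\hookrightarrow L^{\infty}$ exactly as you indicate.
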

\global\long\def\newmacroname{}

\begin{thm}
\cite[Theorem 9(3)]{ADSZ}(LLT- Continuous version) Assume that $\varphi:X\rightarrow\bbR$
is aperiodic and $I$ is a bounded interval. Then 
\[
\sigma\sqrt{n}\hat{T}_{\beta}^{n}\left(\ind_{\left\{ \varphi_{n}\in k_{n}+I\right\} }\right)\left(x\right)\underset{n\rightarrow\infty}{\longrightarrow}\frac{1}{\sqrt{2\pi}}e^{-\frac{t^{2}}{2}}\left(x\right),\, k_{n}\in\bbR,\,\frac{k_{n}-nE_{m}\left(\varphi\right)}{\sigma\sqrt{n}}\rightarrow t
\]
 uniformly in $x\in X$, $t\in K$, for all $K\subseteq\bbR$ compact.
$ $\end{thm}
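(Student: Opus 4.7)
The strategy is the standard Nagaev--Guivarc'h characteristic function method, exploiting the spectral decomposition of the perturbed transfer operator $P(t)f=\hat T_\beta(e^{it\varphi}f)$. The key identity is
\[
\hat T_\beta^{\,n}\bigl(e^{it\varphi_n}f\bigr)=P(t)^nf,
\]
which converts the oscillatory integral that appears after Fourier inversion of $\ind_{\{\varphi_n\in k_n+I\}}$ into a spectral expression.

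First, I would handle the fact that $\ind_I$ does not have an integrable Fourier transform by a sandwich argument. For each $\varepsilon>0$, I pick smooth $g^\pm_\varepsilon\in C_c^\infty(\bbR)$ with $g^-_\varepsilon\le\ind_I\le g^+_\varepsilon$, $\int(g^+_\varepsilon-g^-_\varepsilon)<\varepsilon$, and $\hat g^\pm_\varepsilon\in L^1(\bbR)$. Writing $g(y-k_n)=\frac1{2\pi}\int\hat g(t)e^{it(y-k_n)}\,dt$ and applying $\hat T_\beta^n$ to $g(\varphi_n-k_n)$ gives
\[
\hat T_\beta^{\,n}\bigl(g^\pm_\varepsilon(\varphi_n-k_n)\bigr)(x)=\frac{1}{2\pi}\int_{\bbR}\hat g^\pm_\varepsilon(t)\,e^{-itk_n}\,P(t)^n\ind(x)\,dt.
\]
It suffices to prove the asymptotic for $g^\pm_\varepsilon$ in place of $\ind_I$ and then let $\varepsilon\to 0$; the leading behavior for the Gaussian density will read off $\int g^\pm_\varepsilon=\int\ind_I+O(\varepsilon)=|I|+O(\varepsilon)$.

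Next, I split the $t$-integral into three regions at thresholds $0<\delta<M<\infty$, where $\delta$ is the radius of the perturbation neighborhood in the excerpt and $M$ is chosen large. On $|t|<\delta$, the spectral decomposition yields $P(t)^n=\lambda(t)^n\Pi(t)+R(t)^n$ with $\|R(t)^n\|_{BV}$ decaying geometrically uniformly in $t$, and
\[
\lambda(t)^n=\exp\bigl(itnE_m(\varphi)-\tfrac{\sigma^2}{2}nt^2+o(nt^2)\bigr).
\]
After the change of variable $t=u/(\sigma\sqrt n)$, the main term becomes
\[
\frac{1}{2\pi\sigma\sqrt n}\int_{|u|<\delta\sigma\sqrt n}\hat g^\pm_\varepsilon\!\Bigl(\tfrac{u}{\sigma\sqrt n}\Bigr)\,e^{-iu\frac{k_n-nE_m(\varphi)}{\sigma\sqrt n}}\,e^{-u^2/2}\Pi(0)\ind(x)\,du\cdot(1+o(1)),
\]
and since $\hat g^\pm_\varepsilon(0)=\int g^\pm_\varepsilon$, $\Pi(0)\ind\equiv 1$, and $\frac{k_n-nE_m(\varphi)}{\sigma\sqrt n}\to t$, dominated convergence gives precisely $\frac{1}{\sigma\sqrt n}\cdot\frac{|I|+O(\varepsilon)}{\sqrt{2\pi}}\,e^{-t^2/2}$ uniformly in $x$.

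The two remaining regions must contribute $o(1/\sqrt n)$ uniformly in $x$. This is where I expect the main difficulty and where aperiodicity of $\varphi$ enters decisively. On the compact annulus $\delta\le|t|\le M$, aperiodicity prevents $P(t)$ from having eigenvalues on the unit circle; together with quasi-compactness and continuity of $t\mapsto P(t)$ on $BV$, a compactness argument gives a uniform bound $\sup_{\delta\le|t|\le M}\|P(t)^n\|_{BV}\le C\rho^n$ for some $\rho<1$, so the contribution is exponentially small. On $|t|>M$ the Fourier tail of $g^\pm_\varepsilon$ takes over: since $\hat g^\pm_\varepsilon$ decays faster than any polynomial and $\|P(t)^n\|_{BV}$ is bounded uniformly in $t$ and $n$ (the operator norm is controlled by $\|\hat T_\beta\|_{BV}^n$ times an $e^{it\varphi}$ factor whose variation is controlled by $C_{\varphi,\alpha}<\infty$), choosing $M$ large makes this tail negligible at rate $o(1/\sqrt n)$. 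Combining the three estimates and letting $\varepsilon\to 0$ yields the claim, uniformly in $x\in X$ and in $t$ on compact subsets of $\bbR$.
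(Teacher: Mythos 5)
First, note that the paper does not prove this theorem at all: it is quoted verbatim from \cite[Theorem 9(3)]{ADSZ}, so there is no in-paper argument to compare against. Your plan is the standard Nagaev--Guivarc'h characteristic-function method, which is indeed the method of \cite{ADSZ}, and the treatment of the regions $|t|<\delta$ (spectral expansion of $\lambda(t)^n$, change of variables $t=u/(\sigma\sqrt n)$, dominated convergence) and $\delta\le|t|\le M$ (uniform decay $\|P(t)^n\|\le C\rho^n$ from aperiodicity, quasi-compactness and compactness of the annulus) is correct as sketched.

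There is, however, a genuine gap in the high-frequency region $|t|>M$. Your bound there is $\frac{1}{2\pi}\int_{|t|>M}|\hat g^{\pm}_{\varepsilon}(t)|\,\|P(t)^{n}\ind\|_{\infty}\,dt$, and the only uniform control available is $\|P(t)^{n}\ind\|_{\infty}\le 1$ (the $BV$ operator norm of $P(t)$ actually grows in $|t|$, since $\bigvee e^{it\varphi}\sim|t|\bigvee\varphi$, so it cannot be ``bounded uniformly in $t$ and $n$'' as you assert). This makes the tail contribution $O\bigl(\int_{|t|>M}|\hat g^{\pm}_{\varepsilon}|\bigr)$, a constant in $n$: small if $M$ is large, but not $o(1/\sqrt n)$, and any $n$-independent error swamps the main term, which is itself of order $1/\sqrt n$. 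Aperiodicity gives a spectral gap for each fixed $t\neq 0$ and, by compactness, uniformly on $\delta\le|t|\le M$, but it gives nothing uniform over the unbounded set $|t|>M$; for merely $BV$ observables over $T_{\beta}$ one cannot expect $\sup_{|t|>M}\rho(P(t))<1$. The standard repair (Stone's method, as in \cite{ADSZ} and Breiman) is to sandwich $\ind_{I}$ not by arbitrary $C_{c}^{\infty}$ functions but by nonnegative functions $g^{\pm}_{\varepsilon}$ whose \emph{Fourier transforms are compactly supported} (e.g.\ built from Fej\'er-type kernels), so that the $t$-integral lives entirely in $[-M,M]$ and the problematic region never arises. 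With that substitution your argument closes; without it, the proof as written does not establish the $o(1/\sqrt n)$ error needed for a local limit theorem.
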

\begin{rem}
Uniformity in $t$ in the above theorems should be interpreted as
follows: Let $K\subseteq\bbR$ be compact and assume that for every
$t\in K$ we have a sequence $k_{n}\left(t\right)$ such that $\frac{k_{n}\left(t\right)-nE_{m}\left(\varphi\right)}{\sigma\sqrt{n}}$
converges to $t$ uniformly as $n\rightarrow\infty$, then 
\[
\sigma\sqrt{n}\hat{T}_{\beta}^{n}\left(\ind_{\left\{ \varphi_{n}\in k_{n}\left(t\right)+I\right\} }\right)\left(x\right)\underset{n\rightarrow\infty}{\longrightarrow}\frac{1}{\sqrt{2\pi}}e^{-\frac{t^{2}}{2}}
\]
uniformly in $x\in X$, $t\in K$. 

The following theorems are a version of the two previous ones that
instead of giving actual limits provide an upper bound for $\hat{T}_{\beta}^{n}\left(\ind_{\left\{ \varphi_{n}=k\right\} }\right)$
and $\hat{T}_{\beta}^{n}\left(\ind_{\left\{ \varphi_{n}\in I+y\right\} }\right)$
for all $k\in\bbZ$, $y\in\bbR$. The proof is essentially the same
as the proofs of the LLT theorem. \end{rem}
\begin{thm}
\label{thm:Upper bound LLT}(Discrete version) Assume that $\varphi:X\rightarrow\bbZ$
is aperiodic. Then there exists a constant $C$ such that 
\[
\hat{T}_{\beta}^{n}\left(\ind_{\left\{ \varphi_{n}=k\right\} }\right)\left(x\right)\leq\frac{C}{\sqrt{n}}
\]
for all $k\in K,$ $x\in X$. 
\end{thm}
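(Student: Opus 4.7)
The plan is to use the Nagaev--Guivarc'h Fourier method, extracting an upper bound from essentially the same spectral analysis of $P(t)$ used to establish the LLT. Since $\varphi$ is integer-valued, Fourier inversion on $[-\pi,\pi]$ gives
$$\ind_{\{\varphi_n=k\}}(x) \;=\; \frac{1}{2\pi}\int_{-\pi}^{\pi}e^{it(\varphi_n(x)-k)}\,dt.$$
A routine induction using the intertwining $\hat{T}_\beta(g\cdot (h\circ T_\beta))=h\cdot\hat{T}_\beta g$ together with the definition $P(t)f=\hat{T}_\beta(e^{it\varphi}f)$ yields $\hat{T}_\beta^n(e^{it\varphi_n}f)=P(t)^n f$. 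Applied to $f\equiv\ind$, and combined with a bounded interchange of the integral with $\hat{T}_\beta^n$, this gives
$$\hat{T}_\beta^n(\ind_{\{\varphi_n=k\}})(x) \;=\; \frac{1}{2\pi}\int_{-\pi}^{\pi}e^{-itk}\,P(t)^n\ind(x)\,dt,$$
so it suffices to bound $\|P(t)^n\ind\|_\infty$ uniformly in $t$ on two complementary ranges and integrate.

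\textbf{Low-frequency regime.} I choose $\delta>0$ small enough that the perturbative decomposition $P(t)=\lambda(t)\Pi(t)+R(t)$ holds for $|t|<\delta$, with $\Pi(t)$ a smoothly varying rank-one projection, $\|R(t)^n\|_{BV}\leq C\rho^n$ for some $\rho<1$, and $|\lambda(t)|\leq 1-ct^2$ following from the Taylor expansion of $\lambda$ recalled above (using $\sigma^2>0$). Since $BV$ embeds continuously into $L^\infty$ and $\Pi(t)\ind$ is uniformly bounded, this produces
$$\|P(t)^n\ind\|_\infty \;\leq\; C_1 e^{-cnt^2}+C_2\rho^n,$$
whose integral over $(-\delta,\delta)$ is of order $O(n^{-1/2})$.

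\textbf{High-frequency regime.} Aperiodicity of $\varphi$ is exactly the statement that $P(t)$ has no eigenvalue on the unit circle for any $t\in(-\pi,\pi]\setminus\{0\}$. Since $P(t):BV\to BV$ is quasi-compact for every $t$ and $t\mapsto P(t)$ is continuous in operator norm, a standard compactness argument on $\{\delta\leq|t|\leq\pi\}$ yields $\rho_1<1$ and $C_3>0$ with $\|P(t)^n\|_{BV}\leq C_3\rho_1^n$ uniformly in $t$ in this range. The contribution to the Fourier integral is therefore exponentially small. Summing the two regimes gives the desired bound $C/\sqrt{n}$, uniformly in $x\in X$ and $k\in\bbZ$.

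The only non-routine ingredient is the uniform spectral gap in the high-frequency regime; this is exactly where aperiodicity is essential, and is the most delicate step in any Nagaev--Guivarc'h argument. In the present setting it can be imported without modification from the proof of the LLT in \cite{ADSZ}, since the same compactness-plus-continuity argument used there to rule out secondary Gaussian modes also provides the uniform exponential decay of $\|P(t)^n\|_{BV}$ needed here. This is why the author remarks that the proof is essentially the same as that of the LLT.
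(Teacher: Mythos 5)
Your argument is correct and is exactly the route the paper intends: the paper gives no written proof, only the remark that the bound follows "essentially as in the proof of the LLT," which is precisely the Nagaev--Guivarc'h decomposition of $\frac{1}{2\pi}\int_{-\pi}^{\pi}e^{-itk}P(t)^{n}\ind\,dt$ into a low-frequency part controlled by $|\lambda(t)|\leq1-ct^{2}$ (valid since $\sigma^{2}>0$) and a high-frequency part killed by the uniform spectral gap that aperiodicity provides via the compactness-plus-continuity argument from \cite{ADSZ}. You have simply written out the details the author chose to omit.
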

\global\long\def\newmacroname{}

\begin{thm}
\label{thm:Upper bound LLT - continuous}(Continuous version) Assume
that $\varphi:X\rightarrow\bbR$ is aperiodic and $I\subseteq\bbR$
is a bounded interval. Then there exists a constant $C$ such that
\[
\hat{T}_{\beta}^{n}\left(\ind_{\left\{ \varphi_{n}\in I+y\right\} }\right)\left(x\right)\leq\frac{C}{\sqrt{n}}
\]
for all $y\in\bbR$, $x\in X$. 
\end{thm}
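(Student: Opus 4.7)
The plan is to follow the Fourier-inversion scheme underlying the LLT proof of Theorem 9(3) of \cite{ADSZ}, but to retain only uniform bounds rather than precise asymptotics. First I would fix a nonnegative Schwartz function $\phi\geq\ind_{I}$ whose Fourier transform $\hat{\phi}$ has compact support in some interval $[-T,T]$; such a majorant exists, for instance by vertically scaling and translating a Fej\'er-type kernel $\bigl(\sin(\epsilon u/2)/(\epsilon u/2)\bigr)^{2}$. Since $\hat{T}_{\beta}^{n}$ is positivity-preserving,
\[
\hat{T}_{\beta}^{n}\bigl(\ind_{\{\varphi_{n}\in I+y\}}\bigr)(x)\leq\hat{T}_{\beta}^{n}\bigl(\phi(\varphi_{n}-y)\bigr)(x),
\]
so it suffices to estimate the right-hand side uniformly in $x\in X$ and $y\in\bbR$.

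\textbf{Fourier inversion and the spectral split.} A direct induction using $\hat{T}_{\beta}(g\cdot h\circ T_{\beta})=h\cdot\hat{T}_{\beta}(g)$ yields $P(t)^{n}\ind=\hat{T}_{\beta}^{n}(e^{it\varphi_{n}})$, so Fourier inversion gives
\[
\hat{T}_{\beta}^{n}\bigl(\phi(\varphi_{n}-y)\bigr)(x)=\frac{1}{2\pi}\int_{-T}^{T}\hat{\phi}(t)\,e^{-ity}\,P(t)^{n}\ind(x)\,dt.
\]
I would then split the integral at the perturbation-theory radius $\delta$ from Section \ref{sec:Assumptions-on-the-observable}. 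On $|t|\leq\delta$, apply $P(t)^{n}=\lambda(t)^{n}\Pi(t)+R(t)^{n}$: the Taylor expansion $\lambda(t)=1-\sigma^{2}t^{2}/2+o(t^{2})$ together with $\sigma^{2}>0$ gives $|\lambda(t)|^{n}\leq e^{-cnt^{2}}$ for some $c>0$, while $\sup_{|t|\leq\delta}\|\Pi(t)\ind\|_{\infty}<\infty$ (by continuity of $t\mapsto\Pi(t)$ in $BV\to BV$-norm around $\Pi(0)=m(\cdot)\ind$) and $\|R(t)^{n}\ind\|_{\infty}=O(\rho^{n})$ for some $\rho<1$. After the substitution $s=t\sqrt{n}$ and using $|e^{-ity}|=1$, the $\lambda$-contribution is bounded by
\[
\frac{\|\hat{\phi}\|_{\infty}\sup_{|t|\leq\delta}\|\Pi(t)\ind\|_{\infty}}{2\pi\sqrt{n}}\int_{\bbR}e^{-cs^{2}}\,ds=O(n^{-1/2}),
\]
uniformly in $x$ and $y$. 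On $\delta\leq|t|\leq T$, aperiodicity of $\varphi$ forces $\rho(P(t))<1$ for every $t\neq0$; by upper semi-continuity of the spectrum and compactness of this annulus, one obtains $\|P(t)^{n}\ind\|_{\infty}\leq C_{T}\tilde{\rho}^{n}$ with $\tilde{\rho}<1$, uniformly there, and this region contributes $O(\tilde{\rho}^{n})$, which is negligible. Adding the two contributions yields the claimed bound $C/\sqrt{n}$.

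\textbf{Main obstacle.} The crucial ingredient not present in the LLT argument \emph{per se} is the band-limited majorant $\phi$: without its compactly supported Fourier transform one would have to control $\|P(t)^{n}\|_{BV}$ for arbitrarily large $|t|$, which is not a consequence of quasi-compactness alone and would require Dolgopyat-type cancellation. Once the majorant is in place, the rest is a direct adaptation of the LLT proof, and uniformity in $y\in\bbR$ is automatic because $y$ enters the Fourier integral only through the unimodular phase $e^{-ity}$.
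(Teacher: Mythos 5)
Your proof is correct and is essentially the approach the paper intends: the paper offers no separate argument beyond the remark that the proof is the same as that of the LLT in \cite{ADSZ}, and your Fourier-inversion/spectral-splitting argument, with the band-limited nonnegative majorant of $\ind_{I}$ handling the continuous case (Stone's smoothing device, already implicit in the LLT proof), is precisely that adaptation with only the uniform upper bounds retained. The only cosmetic point is that your expansion of $\lambda(t)$ omits the drift term $itm(\varphi)$; either center $\varphi$ first (absorbing $nm(\varphi)$ into $y$) or note that the drift does not affect $|\lambda(t)|\leq e^{-ct^{2}}$ to second order.
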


\section{Asymptotic distributional stability}

As explained in the introduction our objective is to prove asymptotic
distributional stability for the random walk adic transformation.
This is the goal of the present section. 

Let $G=\bbZ$ or $G=\bbR$ and let $\varphi:X\rightarrow\bbR$ be
aperiodic with $C_{\varphi,\alpha}<\infty$. As explained in the previous
section, in this case, the random walk adic transformation 
\[
\left(\hat{X}\times G,\left(\mathcal{B}\bigcap\hat{X}\right)\times\mathcal{B}\left(G\right),\mu,\tau_{\varphi}\right)
\]
is conservative and ergodic. Let $\chi$ be a standard Gaussian random
variable defined on some probability space. For two random variable
$Y$ and $Z$ we write $Y\overset{d}{=}Z$ if $Y$ has the same distribution
as $Z$. We prove
\begin{thm}
\label{thm:Main Theorem}The random walk adic transformation is disitributionally
stable with return sequence $a_{n}\propto\frac{n}{\sqrt{\log n}}$
and a random variable $Y\overset{d}{=}e^{-\chi^{2}}$ in the limit,
i.e 
\begin{equation}
\frac{1}{a_{n}}S_{n}\left(f\right)\overset{\mathcal{L}\left(m\right)}{\rightarrow}e^{-\chi^{2}}m\left(f\right)\label{eq: Distributional Stability-1}
\end{equation}
 for all $f\in L^{1}\left(m\right)$, $f\geq0$, where $S_{n}\left(f\right):=\sum_{k=0}^{n-1}f\circ\tau_{\varphi}^{k}$. \end{thm}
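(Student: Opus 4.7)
The plan is to adapt the method of Aaronson--Sarig \cite{AS}, who proved the analogous distributional stability for random-walk adic transformations driven by subshifts of finite type. The overall structure of their argument transfers, with the SFT combinatorics replaced by the $\beta$-transformation combinatorics (full cylinders, lexicographic enumeration via $\tau$). By Hopf's ratio ergodic theorem it suffices to establish (\ref{eq: Distributional Stability-1}) for a single nonzero $f\in L^{1}(\mu)$, $f\geq 0$; a density argument then lets me restrict to $f=\ind_{C\times I}$, where $C$ is a full cylinder in $\hat X$ and $I$ is a bounded set in $G$.

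The first substantive step is a dual-counting identity. Because $\tau_{\varphi}$ parametrizes the tail relation of $\sigma_{\varphi}$, the orbit $\{\tau_{\varphi}^{k}(x,y):0\le k<n\}$ visits each tail-equivalent point $(x',\,y+\varphi_{N}(x)-\varphi_{N}(x'))$ with $T_{\beta}^{N}x=T_{\beta}^{N}x'$ exactly once. Combining Proposition \ref{prop: Identification of adic transformation} with Lemma \ref{lem: Full Intervals Lemma}, the first $n$ adic iterates exhaust the tail class up to a level $N=N(x,n)\sim\log_{\beta}n$, with an error of strictly smaller order. This yields
\[
S_{n}(\ind_{C\times I})(x,y)=\sum_{x':\,T_{\beta}^{N}x'=T_{\beta}^{N}x}\ind_{C}(x')\,\ind_{I}\!\left(y+\varphi_{N}(x)-\varphi_{N}(x')\right)+o\!\left(\frac{n}{\sqrt{\log n}}\right).
\]
The sum on the right is $\mathcal{L}^{N}g_{x,y}(T_{\beta}^{N}x)$ with $g_{x,y}(x')=\ind_{C}(x')\,\ind_{I}(y+\varphi_{N}(x)-\varphi_{N}(x'))$, and the intertwining $\mathcal{L}^{N}f=\beta^{N}h\cdot \hat T_{\beta}^{N}(f/h)$ converts this into an expression involving $\hat T_{\beta}^{N}$ applied to an indicator of a level set of $\varphi_{N}$.

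Next I invoke the leading-order LLT and the uniform upper bound of Theorem \ref{thm:Upper bound LLT - continuous}. With $c:=y+\varphi_{N}(x)$, the LLT gives
\[
\sigma\sqrt{N}\,\hat T_{\beta}^{N}(g_{x,y}/h)(T_{\beta}^{N}x)\;\sim\;\frac{m(C)\,|I|}{\sqrt{2\pi}\,h(T_{\beta}^{N}x)}\exp\!\left(-\frac{c^{2}}{2\sigma^{2}N}\right),
\]
uniformly in the relevant parameters. Since $\beta^{N}\asymp n$ and $N\sim\log_{\beta}n$, this produces
\[
\frac{S_{n}(\ind_{C\times I})(x,y)}{n/\sqrt{\log n}}\;\sim\;\kappa\cdot\mu(C\times I)\cdot F_{n}(x,y),\qquad F_{n}(x,y):=\exp\!\left(-\frac{(y+\varphi_{N}(x))^{2}}{2\sigma^{2}N}\right),
\]
for an explicit $\kappa>0$. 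The dual CLT of Section \ref{sec:Assumptions-on-the-observable}, which is uniform in the base point, implies that $\varphi_{N}(x)/(\sigma\sqrt{N})$ converges strongly in distribution to a standard Gaussian $\chi$; hence $F_{n}\to e^{-\chi^{2}/2}$ strongly in distribution, and absorbing $\kappa$ together with the factor $1/2$ into the normalizing sequence $a_{n}\propto n/\sqrt{\log n}$ recasts the limit as $Y\stackrel{d}{=}e^{-\chi^{2}}$. Since $F_{n}$ is $[0,1]$-valued, the promotion from one-measure convergence in distribution to strong distributional convergence can be carried out by the method of moments, verifying $\int F_{n}^{k}\,p\,d\mu\to E(e^{-k\chi^{2}})\int p\,d\mu$ for every $k\in\bbN$ and every $p$ in a dense BV subclass of $L^{1}(\mu)$, once again via the uniformity of the LLT in the base point.

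The main technical obstacle lies in the clean version of the dual-counting identity. The $\beta$-shift is not Markov---admissibility at level $N$ depends on the whole prefix of $d(1,\beta)$, and $\tau$ may produce carries of unbounded length controlled by the stopping time $n_{0}$ of Proposition \ref{prop: Identification of adic transformation}. Consequently the correspondence ``$n$ adic steps $\leftrightarrow$ tail level $\log_{\beta}n$'' is genuinely noisy, and cylinders lexicographically close to the top of $S_{\beta}$ must be shown to contribute negligibly both to the $\mu(C\times I)$ factor and to the fluctuation $F_{n}$. The full-cylinder lemma (Lemma \ref{lem: Full Intervals Lemma}) together with the quasi-compactness of $\hat T_{\beta}$ on BV are the natural tools for absorbing these errors.
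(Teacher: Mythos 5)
Your overall strategy---count visits by rewriting the adic orbit sum as a tail-class count, push it through $\mathcal{L}^{N}=\beta^{N}h\cdot\hat{T}_{\beta}^{N}(\,\cdot\,/h)$, apply the LLT to get the $e^{-\bar{\varphi}_{N}^{2}/2\sigma^{2}N}$ factor, and finish with the uniform CLT---is the same as the paper's. But your central ``dual-counting identity'' has a genuine gap. You claim that the first $n$ adic iterates exhaust the \emph{single} tail class $T_{\beta}^{-N}(T_{\beta}^{N}x)$ at a level $N\sim\log_{\beta}n$ up to an error $o(n/\sqrt{\log n})$. This is false: the class $T_{\beta}^{-N}(T_{\beta}^{N}x)$ has cardinality $\beta^{N}h(T_{\beta}^{N}x)(1+o(1))$, where $h$ is a nonconstant density bounded between $1-\tfrac{1}{\beta}$ and $\tfrac{\beta}{\beta-1}$, and moreover the orbit segment starts at an arbitrary position inside its class. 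So the symmetric difference between $\{x,\tau x,\dots,\tau^{n-1}x\}$ and any single level-$N$ class is of size $\Theta(n)$, and since (by the LLT itself) a fraction $\asymp 1/\sqrt{N}$ of the points in any chunk of size $\Theta(\beta^{N})$ land in $I$, that symmetric difference contributes $\Theta(n/\sqrt{\log n})$ to the count---the same order as the main term. No amount of BV/quasi-compactness absorbs this; the obstruction is the $\Theta(1)$ multiplicative granularity of a single class, not the non-Markov carries you flag at the end.

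The paper's fix is precisely a multi-block decomposition that you are missing: it sets $l_{n}=\bigl[\log_{\beta}\bigl(n/((R+2)(1+\epsilon))\bigr)\bigr]$ so that each level-$l_{n}$ class has only about $n/(R+2)$ elements, approximates $n$ by $K_{l_{n}}^{r_{n}(x)}(x)$ with a number of blocks $r_{n}(x)$ between $R$ and a constant (Lemma \ref{lem: Estimation for K(n,r)}), sums the LLT contribution over the $r_{n}(x)$ classes $T_{\beta}^{-l_{n}}(\tau^{j}T_{\beta}^{l_{n}}x)$ (Lemmas \ref{lem: Calculation of S_K(n,r)} and \ref{lem:Estimation by LLT}), and uses the ergodic theorem for $h$ to replace $\sum_{j}h(\tau^{j}T_{\beta}^{l_{n}}x)$ by $r_{n}(x)(1\pm\epsilon)$. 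The two partially covered boundary blocks then cost only a fraction $\asymp 1/R$ of the total, which is made $<\epsilon$. Two smaller points: your claim that the factor $\tfrac12$ in $e^{-\chi^{2}/2}$ can be ``absorbed into the normalizing sequence'' is wrong---$e^{-\chi^{2}/2}$ and $e^{-\chi^{2}}$ are not constant multiples of one another (the paper is itself inconsistent on this constant, but rescaling $a_{n}$ cannot repair it); and your method-of-moments promotion to strong distributional convergence is a workable alternative to the paper's subsequence argument via \cite[Corollary 3.6.2]{Aa1}, but only once the pointwise estimate on sets of Lebesgue measure $\geq1-\epsilon$ is actually in hand, which requires the block decomposition above.
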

\begin{rem}
The theorem is valid for an aperiodic random walk on $G=\bbZ^{k}\times\bbR^{D-k}$
with return sequence $a_{n}\propto\frac{n}{\left(\log n\right)^{\frac{D}{2}}}$.
The changes needed for the proof in this setting are statements of
theorems in section (\ref{sec:Assumptions-on-the-observable}) for
$G=\bbZ^{k}\times\bbR^{D-k}$ as in \cite{AS}. In this case the random
variable $e^{-\chi_{D}^{2}}$ appears in the limit, where $\chi_{D}^{2}=\left\Vert \xi^{2}\right\Vert _{2}^{2}$
for $\xi$ a standard Gaussian random vector in $\bbR^{D}$. 
\end{rem}

\subsection{\label{sub:Overview-of-the proof}Overview of the proof. }

Similarly to the methods of \cite{AS} we split the $\tau$ orbit
up to time $n$ of a point $x\in\hat{X}$ into smaller blocks, where
each block is of the form $\left\{ T^{-l_{n}}\tau^{i}T^{l_{n}}x\right\} $
with $l_{n}\propto\log n$. Since the topological entropy of $T_{\beta}$
is $\log\beta$, each block is roughly of size $\beta^{l_{n}}$ (lemma
\ref{lem: Estimation for K(n,r)}). Over these blocks we are able
to estimate the sums $S_{n}\left(f\right)\left(x,y\right)$ for $f=\ind_{\hat{X}\times I}$
where $I$ is a bounded interval using the LLT (lemma \ref{lem:Estimation by LLT}).
This will allow us to prove that (\ref{eq: Distributional Stability-1})
holds for $f=\ind_{\hat{X}\times I}$, which by Hopf's ergodic theorem
is sufficient to obtain theorem \ref{thm:Main Theorem} (section \ref{sub:Proof-of-the main thm}).

\subsection{Conventions and notations.}

Throughout the remaining part of this paper we use the following conventions: 
\begin{enumerate}
\item For $a,b\in\bbR$, $c>0$ we write $a=b\pm c$ if $a\leq b+c$ and
$a\geq b-c$. 
\item $For$$I\subseteq G$, $\left|I\right|$ denotes the Haar measure
of $I$ (we use this in order to distinguish between the Lebesgue
measure on $\hat{X}$ which we denote by $\Leb$ and the Haar measure
on $G=\bbR$ or $G=\bbZ$.
\item $S_{n}\left(f\right)\left(x,y\right):=\sum_{k=0}^{n-1}f\left(\tau_{\varphi}^{k}\left(x,y\right)\right)$
\item $\phi_{n}:=\sum_{k=0}^{n-1}\phi\left(\tau^{k}x\right)$
\item $E_{m}\left(f\right):=\int_{X}f\left(x\right)\, dm\left(x\right)$;
$Var_{m}\left(f\right)=E_{m}\left(f^{2}\right)-E_{m}^{2}\left(f\right)$ 
\item For $x\sim_{\mathcal{T}\left(T_{\beta}\right)}x'$, set $N\left(x,x'\right):=\min\left\{ n\in\bbN:\, x_{j}=x_{j}^{'}\,\forall j\geq n\right\} $
\end{enumerate}

\subsection{Estimates.}

Set $J_{n}\left(x\right)=\#\left\{ y:\, y\in T^{-n}\left(x\right)\right\} $. 

For $n$ fixed, we call a point $x\in\hat{X}$ 
\begin{itemize}
\item $n$-minimal if $x=\min\left\{ T_{\beta}^{-n}\left(T_{\beta}^{n}x\right)\right\} $
and
\item $n$-maximal if $x=\max\left\{ T_{\beta}^{-n}\left(T_{\beta}^{n}x\right)\right\} $; 
\end{itemize}
where the minimum and the maximum are with respect to the reverse
lexicographic order. 

Define $K_{n}:\hat{X}\rightarrow\bbN$ and $\tau_{n}:\hat{X}\rightarrow\hat{X}$
by 
\[
K_{n}\left(x\right)=\min\left\{ k:\:\tau^{k}\left(x\right)\, is\, n\, maximal\right\} 
\]
and $\tau_{n}\left(x\right):\hat{X}\rightarrow\hat{X}$ by 
\[
\tau_{n}\left(x\right)=\tau^{K_{n}\left(x\right)+1}.
\]
Then 
\begin{itemize}
\item $\tau_{n}\left(x\right)$ is $n$-minimal as it must have zeroes in
the first $n$ coordinates of its $\beta$-expansion (see section
\ref{sub:Adic-transformation} for details on the structure of $\tau$).
\item $T^{n}\left(\tau_{n}x\right)=\tau\left(T^{n}x\right).$ 
\item $K_{n}\left(x\right)\leq J_{n}\left(x\right):=\#T^{-n}\left(T^{n}\left(x\right)\right)$
with equality if $x$ is $n$-minimal. 
\end{itemize}
Set $K_{n}^{r}\left(x\right):=K_{n}\left(x\right)+\sum_{j=1}^{r-1}K_{n}\left(\tau_{n}^{j}\left(x\right)\right)$
and $K_{n}^{0}=0$. 

Eventually, as explained in \ref{sub:Overview-of-the proof}, for
every $x\in\hat{X}$ we approximate $n$ by $K_{l_{n}}^{r}\left(x\right)$,
where $l_{n}\sim\log_{\beta}n$ and $r=r_{n}\left(x\right)$ is large.
This allows us to split the orbit of $x$ under $\tau$ up to time
$n$, into blocks of the form $T_{\beta}^{-l_{n}}\left(T_{\beta}^{l_{n}}\left(\tau_{n}^{j}x\right)\right)$
with cardinality of each block equal to $K_{l_{n}}\left(\tau_{l_{n}}^{j}\left(x\right)\right)$,
$j=1,...,r-1$. On each of these blocks, we are able to use the local
limit theorem, in order to obtain a total estimate for $\ind_{\hat{X}\times I}\left(S_{K_{l_{n}}^{r}}\left(x,0\right)\right)$
where $I=\left\{ 0\right\} $ if $G=\bbZ$ and $I$ is Riemann integrable
with $\left|I\right|<\infty$ if $G=\bbR$. 

We start with a lemma that provides an estimate of $K_{n}^{r}\left(x\right)$
on a large set of $x\in\hat{X}$. Note that the set depends on $n$,
but not on $r$, if $r$ is large enough. 
\begin{lem}
\label{lem: Estimation for K(n,r)}For every $\epsilon>0$ there exist
$R,N\in\bbN$ such that for every $n>N$, there exists a set $A_{n}^{R}$
with $\Leb\left(A_{n}^{R}\right)\geq1-\epsilon$, such that for every
$r>R$ and $x\in A_{n}^{R}$ , $K_{n}^{r}\left(x\right)=\beta^{n}r\left(1\pm\epsilon\right)$. \end{lem}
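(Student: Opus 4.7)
The plan is to rewrite $K_{n}^{r}(x)$ as a Birkhoff sum under $\tau$ of an observable that does not depend on $n$, modulo errors that vanish as $r,n\to\infty$. The starting point is the identity $T_{\beta}^{n}(\tau_{n}^{j}x)=\tau^{j}(T_{\beta}^{n}x)$, which combined with the fact that $\tau_{n}^{j}x$ is $n$-minimal for $j\geq 1$ gives $K_{n}(\tau_{n}^{j}x)=\#T_{\beta}^{-n}(\tau^{j}T_{\beta}^{n}x)-1=(\mathcal{L}^{n}\ind)(\tau^{j}T_{\beta}^{n}x)-1$. Hence
\[
K_{n}^{r}(x)=K_{n}(x)-(r-1)+\sum_{j=1}^{r-1}(\mathcal{L}^{n}\ind)(\tau^{j}T_{\beta}^{n}x),
\]
with $K_{n}(x)\leq(\mathcal{L}^{n}\ind)(T_{\beta}^{n}x)$. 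The first uniform estimate needed is $\|\mathcal{L}^{n}\ind/\beta^{n}-h\|_{\infty}\to 0$. By the intertwining $\mathcal{L}^{n}f=\beta^{n}h\cdot\hat{T}_{\beta}^{n}(f/h)$ applied to $f\equiv\ind$, this reduces to $\hat{T}_{\beta}^{n}(1/h)\to m(1/h)=1$ uniformly; since $1/h\in BV$ (as $h\in BV$ is bounded away from $0$) and $\hat{T}_{\beta}$ is quasi-compact on $BV$ with simple dominating eigenvalue $1$ and spectral projection $f\mapsto m(f)\ind$, one gets exponential convergence in $BV$, hence in sup-norm.

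Next, the adic map $\tau$ is $\Leb$-preserving and parametrizes the tail relation of $T_{\beta}$, which is exact; consequently $\tau$-invariant sets are trivial and $\tau$ is $\Leb$-ergodic (alternatively, this may be read off from ergodicity of $\tau_{\varphi}$ by factoring out the $G$-coordinate). Since $h\in L^{\infty}$ with $\int_{\hat{X}}h\,d\Leb=1$, Birkhoff's ergodic theorem together with Egorov produces, for any $\delta>0$, a set $B\subseteq\hat{X}$ with $m(B)\geq 1-\delta$ and an integer $R=R(\delta)$ such that $\bigl|\tfrac{1}{r}\sum_{j=0}^{r-1}h(\tau^{j}y)-1\bigr|<\delta$ for every $y\in B$ and $r\geq R$. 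Set $A_{n}^{R}:=T_{\beta}^{-n}B$; a direct change of variables yields
\[
\Leb(A_{n}^{R})=\beta^{-n}\int_{B}(\mathcal{L}^{n}\ind)\,d\Leb\underset{n\to\infty}{\longrightarrow}\int_{B}h\,d\Leb=m(B)\geq 1-\delta,
\]
so for $n$ larger than some $N=N(\delta)$, chosen also so that $\|\mathcal{L}^{n}\ind/\beta^{n}-h\|_{\infty}<\delta$, one has $\Leb(A_{n}^{R})\geq 1-2\delta$. Choosing $\delta<\epsilon/2$ at the start gives the required measure bound $\Leb(A_{n}^{R})\geq 1-\epsilon$.

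Putting things together: for $x\in A_{n}^{R}$, $y:=T_{\beta}^{n}x\in B$, $r>R$ and $n>N$, the three ingredients combine to give
\[
\frac{K_{n}^{r}(x)}{\beta^{n}r}=\frac{1}{r}\sum_{j=1}^{r-1}\frac{(\mathcal{L}^{n}\ind)(\tau^{j}y)}{\beta^{n}}+O\bigl(\tfrac{1}{r}\bigr)+O\bigl(\tfrac{1}{\beta^{n}}\bigr)=\frac{1}{r}\sum_{j=1}^{r-1}h(\tau^{j}y)+O(\delta)=1+O(\delta),
\]
where the error $O(1/r)$ absorbs $K_{n}(x)\leq\|h\|_{\infty}\beta^{n}(1+o(1))$, the error $O(1/\beta^{n})$ absorbs the $-(r-1)$ term, and the $O(\delta)$ quantities come from the uniform approximation $|\mathcal{L}^{n}\ind/\beta^{n}-h|<\delta$ and the Egorov estimate on $B$. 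Choosing $\delta$ small compared to $\epsilon$ yields $K_{n}^{r}(x)=\beta^{n}r(1\pm\epsilon)$. The main obstacle is the uniform sup-norm estimate $\mathcal{L}^{n}\ind/\beta^{n}\to h$ of the first step: this converts the $n$-dependent family of observables $\#T_{\beta}^{-n}(\cdot)$ into the fixed $L^{1}$ observable $h$, and thereby makes Birkhoff's theorem applicable with a single exceptional set $B$ that does not depend on $n$. The remaining work is standard Birkhoff/Egorov bookkeeping and a brief change-of-variables computation relating $\Leb$ to $m$ on $A_{n}^{R}$.
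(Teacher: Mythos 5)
Your proposal is correct and follows essentially the same route as the paper: decompose $K_{n}^{r}$ into fiber cardinalities via $T_{\beta}^{n}\tau_{n}^{j}=\tau^{j}T_{\beta}^{n}$, replace $\beta^{-n}\mathcal{L}^{n}\ind$ by $h$ uniformly using quasi-compactness of $\hat{T}_{\beta}$ on $BV$, and control $\frac{1}{r}\sum_{j}h(\tau^{j}T_{\beta}^{n}x)$ by the ergodic theorem for $\tau$ on a single set pulled back by $T_{\beta}^{-n}$. The only (harmless) deviations are that you transfer the measure bound to $T_{\beta}^{-n}B$ by a change of variables through $\mathcal{L}^{n}$ where the paper uses $T_{\beta}$-invariance of $m$ together with the two-sided density bounds, and you are slightly more explicit about the ergodicity of $\tau$ and the need to enlarge $R$ so the $O(1/r)$ term is absorbed.
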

\begin{proof}
Fix $\epsilon>0$. We have 

\begin{eqnarray*}
K_{n}^{r}\left(x\right) & = & K_{n}\left(x\right)+\sum_{j=1}^{r-1}K_{n}\left(\tau_{n}^{j}x\right)\\
 & = & K_{n}\left(x\right)+\sum_{j=1}^{r-1}J_{n}\left(\tau_{n}^{j}x\right)\\
 & = & K_{n}\left(x\right)+\sum_{j=1}^{r-1}\#\left\{ T_{\beta}^{-n}\left(T_{\beta}^{n}\tau_{n}^{j}x\right)\right\} \\
 & = & K_{n}\left(x\right)+\sum_{j=1}^{r-1}\#\left\{ T_{\beta}^{-n}\tau^{j}T_{\beta}^{n}x\right\} 
\end{eqnarray*}
where the last equality follows form $ $$T^{n}\left(\tau_{n}x\right)=\tau\left(T^{n}x\right)$. 

By definition of the transfer operator $\mathcal{L}$ (see section
\ref{sec:Assumptions-on-the-observable}), 
\begin{eqnarray*}
\#\left\{ T^{-n}\tau^{j}T^{n}x\right\}  & = & \left(\mathcal{L}_{n}\ind\right)\left(\tau^{j}T_{\beta}^{n}x\right)\\
 & = & \beta^{n}h\left(\tau^{j}T_{\beta}^{n}x\right)\hat{T}_{\beta}^{n}\left(\frac{1}{h}\right)\left(\tau^{j}T_{\beta}^{n}x\right)\\
 & = & \beta^{n}h\left(\tau^{j}T_{\beta}^{n}x\right)\left(E_{m}\left(\frac{1}{h}\right)\pm\eta^{n}\right)\\
 & = & \beta^{n}\left(h\left(\tau^{j}T_{\beta}^{n}x\right)\pm\eta^{n}\right)
\end{eqnarray*}
where $0<\eta<1$. It follows that 
\[
K_{n}^{r}\left(x\right)=K_{n}\left(x\right)+\beta^{n}\left(\sum_{j=1}^{r-1}h\left(\tau^{j}T_{\beta}^{n}x\right)\pm r\eta^{n}\right).
\]
A similar computation gives $K_{n}\left(x\right)\leq\#T^{-n}\left(T^{n}x\right)\leq C\beta^{n}$
where $C$ is some constant. 

Set $A_{n}^{R}\left(\epsilon\right):=\left\{ x:\sum_{j=1}^{r-1}h\left(\tau^{j}T^{n}x\right)=r\pm\frac{\epsilon}{2}\:\forall r>R\right\} $
and consider the set $A_{1}^{R}\left(\epsilon\right)$. Since $E_{\lambda}\left(h\right)=1$,
by the ergodic theorem we have that $\Leb\left(A_{1}^{R}\left(\epsilon\right)\right)\geq1-\epsilon$
if $R$ is large enough. Since every measurable $A\subseteq X$ satisfies
$\left(1-\frac{1}{\beta}\right)\mathcal{\Leb}\left(A\right)\leq m\left(A\right)\leq\frac{1}{1-\frac{1}{\beta}}\mathcal{\Leb}\left(A\right)$,
it follows that there exists $R>1$ such that for every $n\in\bbN$,
$m\left(A_{n}^{R}\left(\epsilon\right)\right)=m\left(T^{-n}A_{1}^{R}\left(\epsilon\right)\right)=m\left(A_{1}^{R}\left(\epsilon\right)\right)\geq1-\epsilon$.
Therefore, there exists $R'$ such that for every $n\in\bbN$, $r>R'$,
$\Leb\left(A_{n}^{r}\left(\epsilon\right)\right)\geq1-\epsilon$.
Let $R'',\, N\in\bbN$ be such that $\forall n>N$, $r>R''$, we have
$\eta^{n}<\frac{\epsilon}{4}$ and $\frac{C}{r}<\frac{\epsilon}{4}$.
Then for $n>N$, $r>\max\left(R',R''\right)$, $x\in A_{n}^{R}:=A_{n}^{\max\left(R',R''\right)}\left(\epsilon\right)$
we have 
\begin{eqnarray*}
K_{n}^{r}\left(x\right) & \leq & K_{n}\left(x\right)+\beta^{n}\left(\sum_{j=1}^{r-1}h\left(\tau^{j}T_{\beta}^{n}x\right)+r\eta^{n}\right)\\
 & \leq & \beta^{n}r\left(1+\frac{C}{r}+\eta^{n}+\frac{\epsilon}{2}\right)\\
 & < & \beta^{n}r\left(1+\epsilon\right)
\end{eqnarray*}
and similarly\\*
\begin{eqnarray*}
K_{n}^{r}\left(x\right) & \geq & \beta^{n}\left(\sum_{j=1}^{r-1}h\left(\tau^{j}T_{\beta}^{n}x\right)-r\eta^{n}\right)\\
 & > & \beta^{n}r\left(1-\epsilon\right).
\end{eqnarray*}
The result follows from this.\end{proof}
\begin{lem}
\label{lem: Calculation of S_K(n,r)}For $I\subseteq G$ measurable,
\\* 

\begin{align*}
S_{K_{n}^{r}\left(x\right)} & \left(\ind_{\hat{X}\times I}\right)\left(x,y\right)\\
 & \leq\sum_{j=0}^{r-1}\#\left\{ z\in T_{\beta}^{-n}\left(\tau^{j}T_{\beta}^{n}\left(x\right)\right):\,\sum_{k=0}^{n+N\left(T^{n}x,\tau^{j}T^{n}x\right)}\varphi\left(T_{\beta}^{k}x\right)-\varphi\left(T_{\beta}^{k}\left(z\right)\right)\in I-y\right\} 
\end{align*}
and 

\begin{align*}
S_{K_{n}^{r}\left(x\right)} & \left(\ind_{\hat{X}\times I}\right)\left(x,y\right)\\
 & \geq\sum_{j=1}^{r-1}\#\left\{ z\in T_{\beta}^{-n}\left(\tau^{j}T_{\beta}^{n}\left(x\right)\right):\,\sum_{k=0}^{n+N\left(T^{n}x,\tau^{j}T^{n}x\right)}\varphi\left(T_{\beta}^{k}x\right)-\varphi\left(T_{\beta}^{k}\left(z\right)\right)\in I-y\right\} .
\end{align*}
\end{lem}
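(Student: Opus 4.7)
The approach is to unfold the Birkhoff sum, identify the cocycle $\phi_k$ as a value of $\psi$, and partition the $\tau$-orbit of $x$ up to time $K_n^r(x)$ into blocks indexed by position at level $n$.

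First I would note that since $\hat X$ is $\tau$-invariant, $\ind_{\hat X\times I}(\tau_\varphi^k(x,y))=\ind_I(y+\phi_k(x))$. From $\phi=\psi(\cdot,\tau\cdot)$, the additivity $\psi(x,x'')=\psi(x,x')+\psi(x',x'')$ (a direct consequence of the defining sum), and telescoping, one obtains $\phi_k(x)=\psi(x,\tau^k x)=\sum_{i=0}^{\infty}(\varphi(T_\beta^i x)-\varphi(T_\beta^i\tau^k x))$, a sum that in fact terminates because $(x,\tau^k x)\in\mathcal{T}(T_\beta)$. Hence $S_{K_n^r(x)}(\ind_{\hat X\times I})(x,y)$ counts those indices $k\in[0,K_n^r(x))$ with $y+\phi_k(x)\in I$.

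Next I would partition $\{0,1,\dots,K_n^r(x)-1\}$ by the value of the level-$n$ image. The identity $T_\beta^n\tau_n(x)=\tau T_\beta^n x$ iterates to $T_\beta^n\tau_n^j(x)=\tau^j T_\beta^n x$, so the indices $k$ with $T_\beta^n\tau^k x=\tau^j T_\beta^n x$ form a contiguous block during which $\tau^k x$ runs, in reverse-lexicographic order, through a segment of $T_\beta^{-n}(\tau^j T_\beta^n x)$. For $j=0$ the segment starts at $x$ (which need not be $n$-minimal) and ends at the $n$-maximum $\tau^{K_n(x)}x$; for $j\geq 1$ it starts at the $n$-minimum $\tau_n^j(x)$ and, over $K_n(\tau_n^j(x))$ consecutive steps, sweeps the whole preimage set $T_\beta^{-n}(\tau^j T_\beta^n x)$. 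Injectivity of $\tau$ on $\hat X$ makes $k\leftrightarrow\tau^k x$ one-to-one within each block.

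Third, for $z:=\tau^k x\in T_\beta^{-n}(\tau^j T_\beta^n x)$ I would rewrite $y+\phi_k(x)\in I$ in the form appearing in the lemma. Since $T_\beta^n z=\tau^j T_\beta^n x$ and the $\beta$-expansions of $T^n x$ and $\tau^j T^n x$ agree from index $N(T^n x,\tau^j T^n x)$ onward, one has $T_\beta^{n+N(T^n x,\tau^j T^n x)}z=T_\beta^{n+N(T^n x,\tau^j T^n x)}x$. Every term of $\psi(x,z)$ beyond this index vanishes, so $\phi_k(x)=\sum_{i=0}^{n+N(T^n x,\tau^j T^n x)}(\varphi(T_\beta^i x)-\varphi(T_\beta^i z))$, the boundary term being $0$.

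The upper bound then follows by enlarging, for each $j=0,1,\dots,r-1$, the set of visited $z$'s in block $j$ to the whole preimage set $T_\beta^{-n}(\tau^j T_\beta^n x)$, which dominates term by term. For the lower bound I would discard block $0$ (whose segment may miss preimage elements preceding $x$) and retain only the blocks $j=1,\dots,r-1$ that start at an $n$-minimum and are swept by $\tau$ in their entirety. The main technical content is the one-to-one block correspondence and the truncation identity for $\psi(x,z)$; verifying which blocks are fully traversed involves only elementary book-keeping once the cocycle identity and level-$n$ partition are in place.
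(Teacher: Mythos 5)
Your proposal is correct and follows essentially the same route as the paper: decompose the orbit up to time $K_n^r(x)$ into level-$n$ blocks via $T_\beta^n\tau_n^j(x)=\tau^jT_\beta^nx$, telescope the cocycle to get $\phi_k(x)=\sum_{i=0}^{N(x,\tau^kx)}\bigl(\varphi(T_\beta^ix)-\varphi(T_\beta^i\tau^kx)\bigr)$ with the truncation $N(x,z)=n+N(T^nx,\tau^jT^nx)$, then enlarge the partial block $j=0$ for the upper bound and discard it for the lower bound. No substantive differences from the paper's argument.
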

\begin{proof}
By definition
\begin{align}
S_{K_{n}^{r}\left(x\right)} & \left(\ind_{\hat{X}\times I}\right)\left(x,0\right)\nonumber \\
 & =S_{K_{n}\left(x\right)}\left(\ind_{\hat{X}\times I}\right)\left(x,y\right)+\sum_{j=1}^{r-1}S_{K_{n}^{j+1}\left(x\right)}\left(\ind_{\hat{X}\times I}\right)\left(x,y\right)-S_{K_{n}^{j}\left(x\right)}\left(\ind_{\hat{X}\times I}\right)\left(x,y\right).\label{eq: S_K(n,r) equation}
\end{align}
For fixed $j\geq1$,

\begin{align*}
S_{K_{n}^{j+1}\left(x\right)}\left(\ind_{\hat{X}\times I}\right)\left(x,y\right) & -S_{K_{n}^{j}\left(x\right)}\left(\ind_{\hat{X}\times I}\right)\left(x,y\right)\\
 & =\sum_{l=K_{n}^{j}\left(x\right)}^{K_{n}^{j+1}\left(x\right)-1}\ind_{\hat{X}\times I}\left(\tau^{l}x,\, y+\phi_{l}\left(x\right)\right)\\
 & =\sum_{l=0}^{K_{n}\left(\tau_{n}^{j}\left(x\right)\right)-1}\ind_{\hat{X}\times I}\left(\tau^{l}\left(\tau^{K_{n}^{j}\left(x\right)}x\right),\, y+\phi_{K_{n}^{\left(j\right)}\left(x\right)+l}\left(x\right)\right)\\
 & =\sum_{l=0}^{J_{n}\left(\tau_{n}^{j}\left(x\right)\right)-1}\ind_{\hat{X}\times I}\left(\tau^{l}\left(\tau^{K_{n}^{j}\left(x\right)}x\right),\, y+\phi_{K_{n}^{\left(j\right)}\left(x\right)+l}\left(x\right)\right).
\end{align*}
Now by the properties listed in the beginning of this section, 

\[
\left\{ \tau^{l}\left(\tau^{K_{n}^{j}\left(x\right)}\left(x\right)\right):\, l=0,...,J_{n}\left(\tau_{n}^{j}\left(x\right)\right)-1\right\} =T_{\beta}^{-n}\left(T_{\beta}^{n}\left(\tau^{K_{n}^{j}\left(x\right)}x\right)\right)=T_{\beta}^{-n}\left(\tau^{j}\left(T_{\beta}^{n}x\right)\right).
\]
Moreover, since for $x\sim_{\mathcal{T}\left(T_{\beta}\right)}x'$,
$N\left(x,x'\right)=\min\left\{ n\in\bbN:\, x_{j}=x_{j}^{'}\,\forall j\geq n\right\} $,
for $M>0$, we have

\begin{align*}
\phi_{M}\left(x\right) & =\sum_{i=0}^{M-1}\phi\left(\tau^{i}x\right)\\
 & =\sum_{i=0}^{M-1}\psi\left(\tau^{i}x,\tau^{i+1}x\right)\\
 & =\sum_{i=0}^{M-1}\sum_{k=0}^{\infty}\varphi\left(T_{\beta}^{k}\left(\tau^{i}x\right)\right)-\varphi\left(T_{\beta}^{k}\left(\tau^{i+1}x\right)\right)\\
 & =\sum_{i=0}^{M-1}\sum_{k=0}^{N\left(\tau^{i}x,\tau^{i+1}x\right)}\varphi\left(T_{\beta}^{k}\left(\tau^{i}x\right)\right)-\varphi\left(T_{\beta}^{k}\left(\tau^{i+1}x\right)\right)\\
 & =\sum_{i=0}^{M-1}\sum_{k=0}^{N\left(x,\tau^{M}x\right)}\varphi\left(T_{\beta}^{k}\left(\tau^{i}x\right)\right)-\varphi\left(T_{\beta}^{k}\left(\tau^{i+1}x\right)\right)\\
 & =\sum_{k=0}^{N\left(x,\tau^{M}x\right)}\varphi\left(T_{\beta}^{k}x\right)-\varphi\left(T_{\beta}^{k}\left(\tau^{M}x\right)\right)
\end{align*}
where the one prior to the last equality follows because $N\left(\tau^{i}x,\tau^{i+1}x\right)\leq N\left(x,\tau^{M}x\right)$
for $i\leq M-1$ and the extra terms in the sum vanish. It follows
that 
\begin{align*}
\sum_{l=0}^{J_{n}\left(\tau_{n}^{\left(j\right)}\left(x\right)\right)-1}\ind_{\left\{ \hat{X}\times I\right\} } & \left(\tau^{l}\left(\tau^{K_{n}^{\left(j\right)}\left(x\right)}x\right),\, y+\phi_{K_{n}^{\left(j\right)}\left(x\right)+l}\left(x\right)\right)\\
 & =\#\left\{ z\in T_{\beta}^{-n}\left(\tau^{j}T_{\beta}^{n}\left(x\right)\right):\,\sum_{k=0}^{N\left(x,z\right)}\varphi\left(T_{\beta}^{k}x\right)-\varphi\left(T_{\beta}^{k}\left(z\right)\right)\in I-y\right\} .
\end{align*}
Similarly, since $K_{n}\left(x\right)\leq J_{n}\left(x\right)$, 
\begin{eqnarray*}
S_{K_{n}\left(x\right)}\left(\ind_{\hat{X}\times I}\right)\left(x,y\right) & = & \sum_{l=0}^{K_{n}\left(x\right)-1}\ind_{\hat{X}\times I}\left(\tau^{l}\left(x\right),y+\phi_{l}\left(x\right)\right)\\
 & \leq & \sum_{l=0}^{J_{n}\left(x\right)}\ind_{\hat{X}\times I}\left(\tau^{l}\left(x\right),y+\phi_{l}\left(x\right)\right)\\
 & = & \#\left\{ z\in T_{\beta}^{-n}\left(T_{\beta}^{n}x\right):\,\sum_{k=0}^{N\left(x,z\right)}\varphi\left(T_{\beta}^{k}x\right)-\varphi\left(T_{\beta}^{k}\left(z\right)\right)\in I-y\right\} \\
 & = & \#\left\{ z\in T_{\beta}^{-n}\left(T_{\beta}^{n}\right):\,\sum_{k=0}^{n}\varphi\left(T_{\beta}^{k}x\right)-\varphi\left(T_{\beta}^{k}\left(x\right)\right)\in I-y\right\} ,
\end{eqnarray*}
where the last equality follows since for $z\in T_{\beta}^{-n}\left(T_{\beta}^{n}x\right)$,
$N\left(x,z\right)\leq n$ and the extra terms in the sum vanish. 

Note that for $j\geq1$, $z\in T^{-n}\left(\tau^{j}T^{n}\left(x\right)\right)$,
\[
N\left(x,z\right)=n+N\left(T_{\beta}^{n}x,T_{\beta}^{n}z\right)=n+N\left(T_{\beta}^{n}x,\tau^{j}T_{\beta}^{n}x\right).
\]
Thus 
\begin{align*}
\#\Biggl\{ z\in T_{\beta}^{-n} & \left(\tau^{j}T_{\beta}^{n}\left(x\right)\right):\,\sum_{k=0}^{N\left(x,z\right)}\varphi\left(T_{\beta}^{k}x\right)-\varphi\left(T_{\beta}^{k}\left(z\right)\right)\in I-y\Biggr\}\\
 & =\#\left\{ z\in T_{\beta}^{-n}\left(\tau^{j}T_{\beta}^{n}\left(x\right)\right):\,\sum_{k=0}^{n+N\left(T^{n}x,\tau^{j}T^{n}x\right)}\varphi\left(T_{\beta}^{k}x\right)-\varphi\left(T_{\beta}^{k}\left(z\right)\right)\in I-y\right\} 
\end{align*}
whence the lemma is proved by summing over $j$ and dropping the term
$S_{K_{n}\left(x\right)}$ from the sum in (\ref{eq: S_K(n,r) equation})
for the lower bound. 
\end{proof}
The next lemma shows that $\max_{0\leq j\leq r}N\left(T_{\beta}^{n}x,\tau^{j}T_{\beta}^{n}x\right)$
is negligible compared to $n$, for all $r$ bounded by some constant.
This is used in lemma \ref{lem:Estimation by LLT} for estimating
sums of the type $\sum_{k=0}^{n+N\left(T^{n}x,\tau^{j}T^{n}x\right)}\varphi\left(T_{\beta}^{k}x\right)-\varphi\left(T_{\beta}^{k}\left(z\right)\right)$
using the LLT. 
\begin{lem}
\label{lem:Estimation by BC}Let $C>0$. Then for all $r<C$, and
$M>C\log\beta$ the set $D_{n}^{r}\left(M\right):=\left\{ x\in\hat{X}:\,\max_{1\leq j\leq r}N\left(T_{\beta}^{n}x,\tau^{j}T_{\beta}^{n}x\right)\geq M\log n\right\} $
has Lebesgue measure $0$ if $n$ is large enough. \end{lem}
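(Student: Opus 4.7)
The plan is to prove something strictly stronger than the statement: the set $D_n^r(M)$ is in fact \emph{empty} for all $n$ sufficiently large, independently of $r$, which of course forces $\Leb(D_n^r(M))=0$. The key is a \emph{uniform} bound, over all $y\in\hat X$ and all $j\ge 1$, on the quantity $N(y,\tau^j y)$ by a constant that depends only on $\beta$.

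First I would extract the structural constant: since $d(1,\beta)$ is assumed not to be eventually periodic, it cannot be eventually constant equal to $[\beta]$, so the initial run of $[\beta]$'s in $d(1,\beta)$ has some finite length $L\ge 1$; write $d(1,\beta)=\left([\beta]^L,b_{L+1},b_{L+2},\ldots\right)$ with $b_{L+1}<[\beta]$. The heart of the argument is then a lexicographic observation: no admissible sequence $\omega\in S_\beta$ contains $L+1$ consecutive $[\beta]$'s. Indeed, if $\omega_{m+1}=\cdots=\omega_{m+L+1}=[\beta]$, then $\sigma^m\omega$ starts with $[\beta]^{L+1}$, while $d(1,\beta)$ matches $[\beta]$'s only in its first $L$ positions and has $b_{L+1}<[\beta]$ in position $L+1$; hence $\sigma^m\omega\succ_{lex}d(1,\beta)$, contradicting the admissibility characterization quoted from \cite{Pa}. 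In particular, for every $z\in\hat X$ the digits $d_2(z),\ldots,d_{L+2}(z)$ cannot all equal $[\beta]$, so $n_0(z)\le L+1$.

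Next, by Proposition \ref{prop: Identification of adic transformation} the $\beta$-expansions of $z$ and $\tau z$ coincide from position $n_0(z)+2$ onward, giving $N(z,\tau z)\le n_0(z)+2\le L+3$ for every $z\in\hat X$. Using the elementary inequality $N(a,c)\le\max\bigl(N(a,b),N(b,c)\bigr)$ together with the $\tau$-invariance of $\hat X$, I would iterate to conclude
\[
N(y,\tau^j y)\le\max_{0\le i<j}N(\tau^i y,\tau^{i+1}y)\le L+3 \qquad\text{for all }y\in\hat X,\ j\ge 1.
\]
A direct check from the definitions of $Y$ and $\hat X$ shows $T_\beta(\hat X)\subseteq\hat X$, so $T_\beta^n x\in\hat X$ whenever $x\in\hat X$, and the above bound specializes to $\max_{1\le j\le r}N(T_\beta^n x,\tau^j T_\beta^n x)\le L+3$ uniformly in $x\in\hat X$, in $r$, and in $n$. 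Once $n$ is large enough that $M\log n>L+3$, the defining inequality of $D_n^r(M)$ is impossible, so $D_n^r(M)=\emptyset$ and \emph{a fortiori} has Lebesgue measure zero.

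The only genuinely substantive step is the lexicographic counting argument that bounds runs of $[\beta]$'s in admissible sequences; everything else is bookkeeping. As a sanity check on the formulation, the hypotheses $r<C$ and $M>C\log\beta$ play no role in this approach — any $M>0$ and any $r$ will do — which is consistent with the lemma being stated with considerable slack relative to the way it is used in Lemma \ref{lem:Estimation by LLT}.
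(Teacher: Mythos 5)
Your argument breaks at the step ``the digits $d_{2}(z),\ldots,d_{L+2}(z)$ cannot all equal $\left[\beta\right]$, so $n_{0}(z)\leq L+1$.'' The lexicographic observation itself is fine (admissible sequences contain no run of $L+1$ consecutive $\left[\beta\right]$'s), but it does not bound $n_{0}$: by definition $n_{0}(z)$ is governed by the condition $T_{\beta}^{n}z\geq1-\frac{1}{\beta}$, which is strictly weaker than $d_{n+1}(z)=\left[\beta\right]$. Since $\beta$ is not an integer, $1-\frac{1}{\beta}<\frac{\left[\beta\right]}{\beta}$, so the subinterval $\left[1-\frac{1}{\beta},\frac{\left[\beta\right]}{\beta}\right)$ is nonempty and consists of points $y$ with $y\geq1-\frac{1}{\beta}$ but first digit $\left[\beta y\right]\leq\left[\beta\right]-1$. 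Hence a long initial block of iterates can stay in $\left[1-\frac{1}{\beta},1\right)$ without the digits being $\left[\beta\right]$, and your uniform bound $N(z,\tau z)\leq L+3$ is false. Concretely, $T_{\beta}$ maps $\left[1-\frac{1}{\beta},1\right)$ bijectively onto $\left[0,1\right)$ (the two branch pieces have disjoint images covering everything), so $\bigcap_{j=1}^{K}T_{\beta}^{-j}\left[1-\frac{1}{\beta},1\right)$ is a nonempty union of intervals of Lebesgue measure comparable to $\beta^{-K}$ for every $K$; on that set $n_{0}>K$ and therefore $N(x,\tau x)=n_{0}(x)+2>K$. In particular $\Sigma_{max}\neq\emptyset$ even though it is null, $N(\cdot,\tau\cdot)$ is unbounded on sets of positive measure, and $D_{n}^{r}(M)$ is never empty --- it contains $T_{\beta}^{-n}$ of a set of measure $\asymp\beta^{-M\log n}>0$.

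This is why the paper argues probabilistically rather than pointwise: it proves only the exponential tail bound $\Leb\left\{ x:N(x,\tau x)\geq t\right\} \leq C'\beta^{-t}$, reduces $\max_{j\leq r}N(T_{\beta}^{n}x,\tau^{j}T_{\beta}^{n}x)$ to the one-step quantity via a union bound over the $r$ steps, and then applies Borel--Cantelli in $n$ with the threshold $M\log n$. The hypotheses $r<C$ and $M>C\log\beta$ are exactly what make the series $\sum_{n}\beta^{-M\log n/r}$ converge, so your closing ``sanity check'' that these hypotheses play no role should have been read as a warning sign rather than as slack in the statement. (Separately, since $D_{n}^{r}(M)$ always has positive measure, the lemma's conclusion is really the Borel--Cantelli conclusion that almost every $x$ lies outside $D_{n}^{r}(M)$ for all large $n$; but that is an issue with the paper's phrasing, not a route to rescuing a uniform bound.)
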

\begin{proof}
Since the quantity $N\left(x,\tau^{j}x\right)$ increases as $j$
increases, we have
\begin{eqnarray*}
C_{n}\left(M\right) & \subseteq & \left\{ x:\, N\left(T_{\beta}^{n}x,\tau^{r}\left(T_{\beta}^{n}x\right)\right)\geq M\log n\right\} \\
 & \subseteq & \left\{ x:\,\max_{0\leq i\leq r-1}N\left(\tau^{i}T_{\beta}^{n}x,\tau^{i+1}T_{\beta}^{n}x\right)\geq\frac{M\log n}{r}\right\} .
\end{eqnarray*}
Since $\tau$ preserves the Lebesgue measure, 
\begin{eqnarray*}
\Leb\left\{ x:\,\max_{0\leq i\leq r}N\left(\tau^{i}x,\tau^{i+1}x\right)\geq\frac{M\log n}{r}\right\}  & = & \lambda\left\{ \bigcup_{i=0}^{r-1}\left\{ x:\, N\left(\tau^{i}x,\tau^{i+1}x\right)\geq\frac{M\log n}{r}\right\} \right\} \\
 & \leq & r\Leb\left\{ x:\, N\left(x,\tau x\right)\geq\frac{M\log n}{r}\right\} .
\end{eqnarray*}
Since $N\left(x,\tau x\right)\geq\frac{M\log n}{r}$ implies that
$T^{i}x\geq1-\frac{1}{\beta}$ for all $i\leq\frac{M\log n}{r}$ (see
proposition \ref{prop: Identification of adic transformation} and
the proof therein), we have $\lambda\left\{ x:\, N\left(x,\tau x\right)\geq\frac{M\log n}{r}\right\} \leq C'\frac{1}{\beta^{\frac{M\log n}{r}}}$
where $C'$ is some constant. Since the sum $\sum_{n=1}^{\infty}\beta^{\frac{-M\log n}{r}}$
converges if $\frac{M}{r}>\log\beta$, it follows by the Borel-Cantelli
lemma that $\Leb\left(D_{n}^{r}\left(M\right)\right)=0$ $ $if $n$
is large enough. \end{proof}
\begin{lem}
\label{lem:Estimation by LLT}Let $I=\left\{ 0\right\} $ if $G=\bbZ$
and $I\subseteq\bbR$ a bounded interval if $G=\bbR$ and let $C$,
$\delta$ be some positive constant,. Then for all $\epsilon>0$,
$r<C$ there exists $N$ such that for all $n>N$, $x\in\hat{X}$,
$y\in I$, 
\begin{align*}
\ind_{B\left(0,\delta\right)}\left(\frac{\bar{\varphi}_{n}\left(x\right)}{\sqrt{n}}\right) & S_{K_{n}^{r}\left(x\right)}\left(\ind_{\hat{X}\times I}\right)\left(x,y\right)\\
\leq & \ind_{B\left(0,\delta\right)}\left(\frac{\bar{\varphi}_{n}\left(x\right)}{\sqrt{n}}\right)\left(\frac{\left|I\right|\beta^{n}\sum_{j=0}^{r-1}h\left(\tau^{j}T_{\beta}^{n}x\right)}{\sigma\sqrt{2\pi n}}\left(e^{-\frac{\bar{\varphi}_{n}^{2}\left(x\right)}{2\sigma^{2}n}}+\epsilon\right)\right)
\end{align*}
and 

\begin{align*}
\ind_{B\left(0,\delta\right)}\left(\frac{\bar{\varphi}_{n}\left(x\right)}{\sqrt{n}}\right) & S_{K_{n}^{r}\left(x\right)}\left(\ind_{\hat{X}\times I}\right)\left(x,y\right)\\
\geq & \ind_{B\left(0,\delta\right)}\left(\frac{\bar{\varphi}_{n}\left(x\right)}{\sqrt{n}}\right)\left(\frac{\left|I\right|\beta^{n}\sum_{j=1}^{r-1}h\left(\tau^{j}T_{\beta}^{n}x\right)}{\sigma\sqrt{2\pi n}}\left(e^{-\frac{\bar{\varphi}_{n}^{2}\left(x\right)}{2\sigma^{2}n}}-\epsilon\right)\right)
\end{align*}
where $\sigma^{2}=\frac{1}{n}Var_{m}\left(\sum_{k=0}^{n}\varphi\circ T_{\beta}^{k}\right)$. \end{lem}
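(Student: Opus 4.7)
The plan is to combine the deterministic bounds of Lemma \ref{lem: Calculation of S_K(n,r)} with the transfer-operator identity $\mathcal{L}^n f = \beta^n h \cdot \hat{T}_\beta^n(f/h)$ and the local limit theorem, by rewriting each inner counting term as an application of $\mathcal{L}^n$ to an indicator of the form $\ind_{\{\varphi_n \in k_n + I'\}}$ and then invoking the LLT uniformly in $j \leq r$.

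First, fix $M > C \log \beta$ and apply Lemma \ref{lem:Estimation by BC}: for $n$ large, on a set of full Lebesgue measure one has $N_j(x) := N(T_\beta^n x, \tau^j T_\beta^n x) \leq M \log n$ for every $1 \leq j \leq r$. For $z \in T_\beta^{-n}(\tau^j T_\beta^n x)$, using $T_\beta^{n+k}z = T_\beta^k \tau^j T_\beta^n x$ a direct computation gives
\[
\sum_{k=0}^{n+N_j(x)-1}\!\bigl[\varphi(T_\beta^k x) - \varphi(T_\beta^k z)\bigr] = \varphi_n(x) - \varphi_n(z) + \Delta_j(x),
\]
where $\Delta_j(x) := \sum_{k=0}^{N_j(x)-1}[\varphi(T_\beta^k T_\beta^n x) - \varphi(T_\beta^k \tau^j T_\beta^n x)]$ is $O(\log n)$ by boundedness of $\varphi$. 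Hence the constraint inside the count becomes $\varphi_n(z) \in k_n(j) - I$, with $k_n(j) := \varphi_n(x) + \Delta_j(x) + y$, and the count equals $\mathcal{L}^n(\ind_{\{\varphi_n \in k_n(j) - I\}})(\tau^j T_\beta^n x)$.

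Using $\mathcal{L}^n = \beta^n h \cdot \hat T_\beta^n(\cdot/h)$ and the fact that $1/h \in BV$ with $E_m(1/h) = \int h^{-1}\,h\,d\Leb = 1$, the spectral-perturbation proof underlying the LLT (as in \cite{ADSZ}) extends verbatim to the weighted statement
\[
\sigma \sqrt{n}\,\hat T_\beta^n\!\bigl(\ind_{\{\varphi_n \in k_n + I'\}}\cdot g\bigr)(w) \xrightarrow[n\to\infty]{} \frac{|I'|\, E_m(g)}{\sqrt{2\pi}}\, e^{-t^2/2}
\]
uniformly in $w \in X$ and in $t = (k_n - nE_m(\varphi))/(\sigma\sqrt{n})$ over compacta of $\bbR$, valid for every $g \in BV$ and every bounded (resp.\ singleton) interval $I'$. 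Applied with $g = 1/h$ and $I' = -I$, this yields
\[
\mathcal{L}^n\bigl(\ind_{\{\varphi_n \in k_n(j) - I\}}\bigr)(\tau^j T_\beta^n x) = \frac{|I|\, \beta^n h(\tau^j T_\beta^n x)}{\sigma\sqrt{2\pi n}}\,\bigl(e^{-t_n(j)^2/2} + o(1)\bigr),
\]
with $o(1)$ uniform in $j \leq r$, $y \in I$, and in $x$.

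Finally, on the set $\bar\varphi_n(x)/\sqrt{n} \in B(0,\delta)$ we have $t_n(j) = \bar\varphi_n(x)/(\sigma\sqrt{n}) + O(\log n / \sqrt{n})$, which lies in a fixed compact subset of $\bbR$; uniform continuity of $s \mapsto e^{-s^2/2}$ there gives $e^{-t_n(j)^2/2} = e^{-\bar\varphi_n^2(x)/(2\sigma^2 n)} + o(1)$ uniformly in the remaining parameters. Summing the per-$j$ estimate over $j = 0,\ldots,r-1$ (upper bound) and $j = 1,\ldots,r-1$ (lower bound) and invoking Lemma \ref{lem: Calculation of S_K(n,r)} produces the two displayed inequalities. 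The principal obstacle is the weighted LLT with $g = 1/h$, which is not literally among the theorems stated before this lemma but is standard once one has quasi-compactness of the characteristic-function operator $P(t)$ on $BV$ together with Fourier inversion; the secondary technical care-point is preserving uniformity in $y$, $j$, and $x$ simultaneously throughout, which is precisely why the $O(\log n)$ control of $\Delta_j(x)$ provided by Lemma \ref{lem:Estimation by BC} is needed.
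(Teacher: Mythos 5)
Your proposal is correct and follows essentially the same route as the paper: bound $S_{K_n^r}$ via Lemma \ref{lem: Calculation of S_K(n,r)}, rewrite each counting term as $\mathcal{L}^n$ of an indicator, pass to $\beta^n h\cdot\hat T_\beta^n(\cdot/h)$, control the $O(\log n)$ correction $k_n(x,z)$ by Lemma \ref{lem:Estimation by BC}, and apply the LLT uniformly over the compact set forced by $\bar\varphi_n(x)/\sqrt{n}\in B(0,\delta)$. You are in fact slightly more careful than the paper at one point, in flagging explicitly that the LLT must be applied to the weighted function $\ind_{\{\cdot\}}/h$ with $E_m(1/h)=1$, a step the paper invokes without comment.
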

\begin{proof}
By lemma \ref{lem: Estimation for K(n,r)} 
\begin{align*}
S_{K_{n}^{r}\left(x\right)} & \left(\ind_{\hat{X}\times I}\right)\left(x,y\right)\\
 & \leq\sum_{j=0}^{r-1}\#\left\{ z\in T_{\beta}^{-n}\left(\tau^{j}T_{\beta}^{n}\left(x\right)\right):\,\sum_{k=0}^{n+N\left(T^{n}x,\tau^{j}T^{n}x\right)}\varphi\left(T_{\beta}^{k}x\right)-\varphi\left(T_{\beta}^{k}\left(z\right)\right)\in I-y\right\} 
\end{align*}
and
\begin{align*}
S_{K_{n}^{r}\left(x\right)} & \left(\ind_{\hat{X}\times I}\right)\left(x,y\right)\\
 & \geq\sum_{j=1}^{r}\#\left\{ z\in T_{\beta}^{-n}\left(\tau^{j}T_{\beta}^{n}\left(x\right)\right):\,\sum_{k=0}^{n+N\left(T^{n}x,\tau^{j}T^{n}x\right)}\varphi\left(T_{\beta}^{k}x\right)-\varphi\left(T_{\beta}^{k}\left(z\right)\right)\in I-y\right\} .
\end{align*}
 Now for fixed $j$, setting $k_{n}\left(x,z\right)=\sum_{k=n+1}^{N\left(T^{n}x,\tau^{j}T^{n}x\right)}\varphi\left(T_{\beta}^{k}x\right)-\varphi\left(T_{\beta}^{k}z\right)$
we have, 
\[
\begin{aligned}\#\Biggl\{ z\in T_{\beta}^{-n}\left(\tau^{j}T_{\beta}^{n}\left(x\right)\right): & \sum_{k=0}^{n+N\left(T^{n}x,\tau^{j}T^{n}x\right)}\varphi\left(T_{\beta}^{k}x\right)-\varphi\left(T_{\beta}^{k}\left(z\right)\right)\in I\Biggr\}\\
 & =\sum_{z\in T^{-n}\tau^{j}T^{n}x}\ind_{\left\{ \varphi_{n}\left(z\right)\in\varphi_{n}\left(x\right)+k_{n}\left(x,z\right)+I-y\right\} }\\
 & =\mathcal{L}^{n}\left(\ind_{\left\{ \varphi_{n}\left(\cdot\right)\in\varphi_{n}\left(x\right)+k_{n}\left(x,\cdot\right)+I-y\right\} }\right)\left(\tau^{j}T^{n}x\right)\\
 & =\beta^{n}h\left(\tau^{j}T^{n}x\right)\hat{T}^{n}\left(\frac{\ind_{\left\{ \varphi_{n}\left(\cdot\right)\in\varphi_{n}\left(x\right)-k_{n}\left(x,\cdot\right)+I-y\right\} }}{h\left(\cdot\right)}\right)\left(\tau^{j}T^{n}x\right).
\end{aligned}
\]
Since $r$ is bounded, by lemma \ref{lem:Estimation by BC} if $n$
is large enough $k_{n}\left(x,z\right)\leq M\sup\left|\varphi\right|\log n$,
where $M$ is constant, and therefore by LLT, there exists $N$, such
that for all $n>N$, $x\in\hat{X}$, $y\in I$ we have 
\begin{align*}
\ind_{B\left(0,\delta\right)}\left(\frac{\bar{\varphi}_{n}}{\sqrt{n}}\right)\beta^{n}h\left(\tau^{j}T^{n}x\right) & \hat{T}^{n}\left(\frac{\ind_{\left\{ \varphi_{n}\left(\cdot\right)\in\varphi_{n}\left(x\right)-k_{n}\left(x,\cdot\right)+I-y\right\} }}{h\left(\cdot\right)}\right)\left(\tau^{j}T^{n}x\right)\\
 & =\ind_{B\left(0,\delta\right)}\left(\frac{\bar{\varphi}_{n}}{\sqrt{n}}\right)\beta^{n}h\left(\tau^{j}T^{n}x\right)\frac{\left|I\right|}{\sigma\sqrt{2\pi n}}\left(e^{-\frac{\bar{\varphi}_{n}^{2}\left(x\right)}{2\sigma^{2}l_{n}}}\pm\epsilon\right).
\end{align*}
whence the lemma is proved by summing over $j$. \end{proof}
\begin{lem}
\label{lem:Final lemma}Let $I$ be as in lemma \ref{lem:Estimation by LLT}.
For every $\epsilon>0$, there exists $K\in\bbN$, such that for every
$n>K$ there exists a set $A_{n}$ with $\Leb\left(A_{n}\right)\geq1-\epsilon$,
so that for every $x\in A_{n}$, $y\in I$ 
\[
\ind_{B\left(0,\delta\right)}\left(\frac{\bar{\varphi}_{n}}{\sqrt{n}}\right)\frac{\sqrt{l_{n}}}{n}S_{n}\left(\ind_{\hat{X}\times I}\right)\left(x,y\right)=\left|I\right|\frac{1}{\sigma\sqrt{2\pi}}\left(e^{-\frac{\bar{\varphi}_{l_{n}}^{2}\left(x\right)}{2\sigma^{2}l_{n}}}\pm\epsilon\right)
\]
where $l_{n}\sim\log_{\beta}n$, $\delta>0$. \end{lem}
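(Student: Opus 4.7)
The plan is to carry out the program sketched in Subsection \ref{sub:Overview-of-the proof}: split the $\tau$-orbit of length $n$ into blocks of the form $T_\beta^{-l_n}(\tau^j T_\beta^{l_n} x)$ and apply the local limit theorem on each block. The first step is to fix a large integer $R$ (larger than the threshold from Lemma \ref{lem: Estimation for K(n,r)} but small compared to the constant $C$ available in Lemma \ref{lem:Estimation by BC}) and to set $l_n:=\lfloor\log_\beta(n/R)\rfloor$, so that $\beta^{l_n}\asymp n/R$ and $l_n\sim\log_\beta n$. For each $x$ let $r=r_n(x)$ denote the unique integer with $K_{l_n}^r(x)\leq n<K_{l_n}^{r+1}(x)$. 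By Lemma \ref{lem: Estimation for K(n,r)} applied at scale $l_n$ with a small auxiliary $\epsilon'>0$, on a set $A_n^{(1)}$ of Lebesgue measure at least $1-\epsilon/3$ one has $K_{l_n}^r(x)=\beta^{l_n}r(1\pm\epsilon')$, and combined with $K_{l_n}^r(x)\asymp n$ this forces $r_n(x)$ to lie in a bounded interval whose endpoints are controlled by $R$.

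Since $\ind_{\hat X\times I}\geq 0$, monotonicity of the partial sums gives the sandwich
\[
S_{K_{l_n}^r(x)}(\ind_{\hat X\times I})(x,y)\leq S_n(\ind_{\hat X\times I})(x,y)\leq S_{K_{l_n}^{r+1}(x)}(\ind_{\hat X\times I})(x,y).
\]
To each side I apply Lemma \ref{lem:Estimation by LLT} (valid on $\{\bar\varphi_{l_n}/\sqrt{l_n}\in B(0,\delta)\}$, with the bounding constant chosen so that $r+1<C$). The lower LLT estimate produces
\[
\ind_{B(0,\delta)}\frac{|I|\beta^{l_n}\sum_{j=1}^{r-1}h(\tau^j T_\beta^{l_n}x)}{\sigma\sqrt{2\pi l_n}}\bigl(e^{-\bar\varphi_{l_n}^{2}(x)/(2\sigma^2 l_n)}-\epsilon'\bigr)
\]
on the left, and the upper LLT estimate produces the analogous expression with $j$ running from $0$ to $r$ and with $+\epsilon'$ in place of $-\epsilon'$ on the right.

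The next step is to replace the Birkhoff-type sums $\sum_j h(\tau^j T_\beta^{l_n}x)$ by their mean. Since $h$ is bounded above and below by positive constants, $\tau$ preserves Lebesgue measure, and $\int h\,d\Leb=1$, the ergodic theorem for $\tau$ yields a set $A_n^{(2)}$ with $\Leb(A_n^{(2)})\geq 1-\epsilon/3$ on which $\sum_{j=0}^{r-1}h(\tau^j T_\beta^{l_n}x)=r(1\pm\epsilon')$ simultaneously for every $r$ in the bounded range above. Together with $\beta^{l_n}r(1\pm\epsilon')=K_{l_n}^r(x)=n-O(\beta^{l_n})$ and $\beta^{l_n}/n\leq\beta/R$, both sides of the sandwich collapse to $\frac{|I|n}{\sigma\sqrt{2\pi l_n}}e^{-\bar\varphi_{l_n}^{2}(x)/(2\sigma^2 l_n)}$ up to a multiplicative factor $1\pm O(1/R+\epsilon')$ and an additive $O(\epsilon')$. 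Multiplying through by $\sqrt{l_n}/n$ then yields the target identity, provided $R$ is first chosen so that $O(1/R)<\epsilon/2$ and then $\epsilon'$ is taken small enough; finally, setting $A_n:=A_n^{(1)}\cap A_n^{(2)}$ produces the required large set.

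The main obstacle is the simultaneous balancing of constraints on $r$: it must be large enough for Lemma \ref{lem: Estimation for K(n,r)} to apply, large enough for Birkhoff's theorem on $\sum h$ to be accurate, and large enough that the final incomplete block contributes only an $O(1/r)$ relative error; yet it must be bounded uniformly in $n$ for Lemma \ref{lem:Estimation by BC} to keep the term $k_n(x,z)$ negligible inside the LLT. The choice $\beta^{l_n}\asymp n/R$ resolves this tension, since $r_n(x)$ then lies in a bounded interval whose lower endpoint can be driven to infinity by increasing the fixed constant $R$ before any of the $\epsilon'$-sized errors are chosen.
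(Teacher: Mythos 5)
Your proposal is correct and follows essentially the same route as the paper: the same choice $\beta^{l_n}\asymp n/R$, the same definition and two-sided bounding of $r_n(x)$ (above by a constant depending on $R$, below by the threshold of Lemma \ref{lem: Estimation for K(n,r)}), the same sandwich $S_{K_{l_n}^{r}}\leq S_n\leq S_{K_{l_n}^{r+1}}$ combined with Lemma \ref{lem:Estimation by LLT}, and the same ergodic-theorem control of $\sum_j h(\tau^j T_\beta^{l_n}x)$ (which the paper simply folds into the set $A_n^R$ already constructed in Lemma \ref{lem: Estimation for K(n,r)} rather than introducing a separate set). The only minor gloss is that the good set for the Birkhoff sums of $h$ lives at the level of $T_\beta^{l_n}x$ and must be pulled back using the uniform equivalence of $m$ and Lebesgue, a point the paper makes explicit.
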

\begin{proof}
Fix $\epsilon>0$. Let $N$, $R$ be as in lemma \ref{lem: Estimation for K(n,r)}.
Set $l_{n}=\left[\log_{\beta}\left(\frac{n}{\left(R+2\right)\left(1+\epsilon\right)}\right)\right]$
. Let $n$ be large enough so that $l_{n}>N$. Then by lemma \ref{lem: Estimation for K(n,r)}
there exists a set $A_{n}$ with $\lambda\left(A_{n}\right)>1-\epsilon$,
such that for all $r>R$, and all $x\in A_{n}$, 
\[
K_{l_{n}}^{r}\left(x\right)=\beta^{l_{n}}r\left(1\pm\epsilon\right).
\]
For $x\in A_{n}$, let $r_{n}\left(x\right)$ be such that $K_{l_{n}}^{r_{n}\left(x\right)}\left(x\right)\leq n<K_{l_{n}}^{r_{n}\left(x\right)+1}$.
Since for $r\leq R$, 
\[
K_{l_{n}}^{r+1}\left(x\right)\leq K_{_{l_{n}}}^{R+1}\left(x\right)=\beta^{l_{n}}\left(R+1\right)\left(1\pm\epsilon\right)\leq\frac{n}{\left(R+2\right)}\left(R+1\right)<n
\]
it follows that $r_{n}\left(x\right)>R$. Moreover, since 
\[
n\geq K_{l_{n}}^{r_{n}\left(x\right)}\geq\beta^{l_{n}}r_{n}\left(x\right)\left(1-\epsilon\right)\geq\left(\frac{n}{\left(R+2\right)\left(1+\epsilon\right)}-\beta\right)\left(r_{n}\left(x\right)\right)\left(1-\epsilon\right)
\]
we have $r_{n}\left(x\right)\leq C$ where $C$ depends only on $R$,$\epsilon$,$\beta$.
By lemma \ref{lem:Estimation by LLT} there exists $N'$ such that
for all $n>N'$, $r\leq C$, $x\in\hat{X}$, $y\in I$ 
\[
\ind_{B\left(0,\delta\right)}\left(\frac{\bar{\varphi}_{n}\left(x\right)}{\sqrt{n}}\right)S_{K_{n}^{r}\left(x\right)}\left(\ind_{\hat{X}\times I}\right)\left(x,y\right)\leq\ind_{B\left(0,\delta\right)}\left(\frac{\bar{\varphi}_{n}\left(x\right)}{\sqrt{n}}\right)\left(\left|I\right|\frac{\beta^{n}\sum_{j=0}^{r-1}h\left(\tau^{j}T_{\beta}^{n}x\right)}{\sqrt{n}}\left(e^{-\frac{\bar{\varphi}_{n}\left(x\right)}{2n}}+\epsilon\right)\right)
\]
and 
\[
\ind_{B\left(0,\delta\right)}\left(\frac{\bar{\varphi}_{n}\left(x\right)}{\sqrt{n}}\right)S_{K_{n}^{r}\left(x\right)}\geq\ind_{B\left(0,\delta\right)}\left(\frac{\bar{\varphi}_{n}\left(x\right)}{\sqrt{n}}\right)\left(\left|I\right|\frac{\beta^{n}\sum_{j=1}^{r-1}h\left(\tau^{j}T_{\beta}^{n}x\right)}{\sqrt{n}}\left(e^{-\frac{\bar{\varphi}_{n}\left(x\right)}{2n}}-\epsilon\right)\right)
\]
It follows that for $n$ such that $l_{n}>\max\left(N,N'\right)$,
and for all $x\in A_{n}$, 
\begin{eqnarray*}
S_{n}\left(\ind_{\hat{X}\times I}\right)\left(x,y\right) & \geq & S_{K_{l_{n}}^{r_{n}\left(x\right)}\left(x\right)}\left(\ind_{\hat{X}\times I}\right)\left(x,y\right)\\
 & \geq & \frac{\beta^{l_{n}}\sum_{j=1}^{r_{n}\left(x\right)-1}h\left(\tau^{j}T^{n}x\right)}{\sigma\sqrt{2\pi l_{n}}}\left(e^{-\frac{\bar{\varphi}_{l_{n}}\left(x\right)}{2l_{n}}}-\epsilon\right)\\
 & \geq & \frac{\beta^{l_{n}}r_{n}\left(x\right)\left(1-\epsilon\right)}{\sigma\sqrt{2\pi l_{n}}}\left(e^{-\frac{\bar{\varphi}_{l_{n}\left(x\right)}}{2l_{n}}}-\epsilon\right)
\end{eqnarray*}
where in the last inequality we use $\sum_{j=1}^{r_{n}\left(x\right)-1}h\left(\tau^{j}T^{n}x\right)\geq r_{n}\left(x\right)\left(1-\epsilon\right)$
for $r>R$, which we may assume to be true by the proof of lemma \ref{lem: Estimation for K(n,r)}.
Similarly
\begin{eqnarray*}
S_{n}\left(\ind_{\hat{X}\times I}\right)\left(x,0\right) & \leq & S_{K_{l_{n}}^{r_{n}\left(x\right)+1}\left(x\right)}\left(\ind_{\hat{X}\times I}\right)\left(x,0\right)\\
 & \leq & \frac{\beta^{l_{n}}\sum_{j=0}^{r_{n}\left(x\right)}h\left(\tau^{j}T^{n}x\right)}{\sigma\sqrt{2\pi l_{n}}}\left(e^{-\frac{\bar{\varphi}_{l_{n}}\left(x\right)}{2l_{n}}}+\epsilon\right)\\
 & \leq & \frac{\beta^{l_{n}}r_{n}\left(x\right)\left(1+\epsilon\right)}{\sigma\sqrt{2\pi l_{n}}}\left(e^{-\frac{\bar{\varphi_{l_{n}}}\left(x\right)}{2l_{n}}}+\epsilon\right).
\end{eqnarray*}
Since $ $$K_{l_{n}}^{r_{n}\left(x\right)}\leq n\leq K_{l_{n}}^{r_{n}\left(x\right)+1}$
we have 
\[
n\geq K_{l_{n}}^{r_{n}\left(x\right)}\geq\beta^{l_{n}}\left(r_{n}\left(x\right)-\epsilon\right)
\]
and 
\[
n\leq K_{l_{n}}^{r_{n}\left(x\right)+1}\leq\beta^{l_{n}}\left(r_{n}\left(x\right)+1+\epsilon\right).
\]
 It follows that 
\[
\frac{\beta^{l_{n}}r_{n}\left(x\right)}{n}\leq1+\frac{\beta^{l_{n}}\epsilon}{n}\leq1+\epsilon
\]
and 
\[
\frac{\beta^{l_{n}}r_{n}\left(x\right)}{n}\geq1-\frac{1}{R+2}-\frac{1}{n}
\]
and we may assume that $\frac{1}{R+2}<\epsilon$ by enlarging $R$
if necessary. Thus, 
\[
\frac{\sqrt{l_{n}}}{n}S_{n}\left(\ind_{\hat{X}\times I}\right)\left(x,0\right)\leq\frac{\left(1+\epsilon\right)^{2}}{\sigma\sqrt{2\pi}}\left(e^{-\frac{\bar{\varphi_{l_{n}}}\left(x\right)}{2l_{n}}}+\epsilon\right)
\]
and 
\[
\frac{\sqrt{l_{n}}}{n}S_{n}\left(\ind_{\hat{X}\times I}\right)\left(x,0\right)\geq\frac{\left(1-\epsilon\right)^{2}}{\sigma\sqrt{2\pi}}\left(e^{-\frac{\bar{\varphi_{l_{n}}}\left(x\right)}{2l_{n}}}-\epsilon\right)
\]
and the lemma follows from this. 
\end{proof}
The following lemma will only be used in the proof of bounded rational
ergodicity of the random walk adic transformation (see section \ref{sec:Bounded-rational-ergodicity}). 
\begin{lem}
\label{lem:Bounded rat ergodicity }Let $I$ be as in lemma \ref{lem:Estimation by LLT}.
There exists $C>0$, such that for all $n,r\in\bbN$, $\left(x,y\right)\in\hat{X}\times G$,
\[
S_{K_{n}^{r}\left(x\right)}\left(\ind_{\hat{X}\times I}\right)\left(x,y\right)\leq Cr\frac{\beta^{n}}{\sqrt{n}}.
\]
\end{lem}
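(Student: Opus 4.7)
The plan is to combine the counting estimate from Lemma \ref{lem: Calculation of S_K(n,r)} with the uniform upper-bound local limit theorem (Theorems \ref{thm:Upper bound LLT} and \ref{thm:Upper bound LLT - continuous}), without using the Gaussian localisation $\ind_{B(0,\delta)}(\bar{\varphi}_n/\sqrt n)$ used in Lemma \ref{lem:Estimation by LLT}.

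First I would apply Lemma \ref{lem: Calculation of S_K(n,r)} to write
\[
S_{K_n^r(x)}\!\left(\ind_{\hat X\times I}\right)(x,y)\;\leq\;\sum_{j=0}^{r-1}\#\!\left\{z\in T_\beta^{-n}(\tau^j T_\beta^n x):\ \textstyle\sum_{k=0}^{n+N(T^n x,\tau^j T^n x)}\varphi(T_\beta^k x)-\varphi(T_\beta^k z)\in I-y\right\}.
\]
Then, exactly as in the proof of Lemma \ref{lem:Estimation by LLT}, I would rewrite the constraint using the fact that for $z\in T_\beta^{-n}(\tau^j T_\beta^n x)$ the orbit $T_\beta^k z$ for $k>n$ coincides with $T_\beta^{k-n}(\tau^j T_\beta^n x)$ and therefore depends only on $x$ and $j$, not on the particular $z$. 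Consequently the extra correction
\[
k_{n,j}(x):=\sum_{k=n+1}^{n+N(T^n x,\tau^j T^n x)}\bigl(\varphi(T_\beta^k x)-\varphi(T_\beta^k z)\bigr)
\]
is a constant (depending on $x,j$ only) once $z$ is required to be a preimage of $\tau^j T_\beta^n x$, and the counting becomes $\mathcal L^n(\ind_{\{\varphi_n(\cdot)\in\varphi_n(x)+k_{n,j}(x)+I-y\}})(\tau^j T_\beta^n x)$.

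Next I would switch from $\mathcal{L}$ to $\hat T_\beta$ via $\mathcal L(f)=\beta h\cdot\hat T_\beta(f/h)$ and use the two-sided bound $1-\tfrac1\beta\le h\le\tfrac{1}{1-1/\beta}$ on the invariant density: the factor $h(\tau^jT_\beta^n x)$ is bounded above by a constant, and positivity of $\hat T_\beta$ together with $1/h\le\beta/(\beta-1)$ gives
\[
\beta^n h(\tau^jT_\beta^n x)\,\hat T_\beta^n\!\left(\tfrac{\ind_{\{\varphi_n\in c+I\}}}{h}\right)\!(\tau^jT_\beta^n x)\ \le\ C_1\,\beta^n\,\hat T_\beta^n\!\left(\ind_{\{\varphi_n\in c+I\}}\right)\!(\tau^jT_\beta^n x),
\]
with $c=\varphi_n(x)+k_{n,j}(x)-y$ and $C_1$ depending only on $\beta$. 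Applying Theorem \ref{thm:Upper bound LLT} (if $G=\bbZ$, taking $I=\{0\}$) or Theorem \ref{thm:Upper bound LLT - continuous} (if $G=\bbR$, $I$ bounded) gives the uniform bound $\hat T_\beta^n(\ind_{\{\varphi_n\in c+I\}})\le C_2/\sqrt n$ independent of the shift $c$ and of the base point, so each summand is at most $C_1 C_2\,\beta^n/\sqrt n$.

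Summing over $j=0,\dots,r-1$ then yields the desired inequality with $C=C_1 C_2$. The argument is essentially a uniform-in-$(x,y)$ version of Lemma \ref{lem:Estimation by LLT} where the pointwise Gaussian asymptotic is replaced by the $O(1/\sqrt n)$ upper bound from the LLT theorems, so no genuinely new obstacle arises; the main point to check carefully is that $k_{n,j}(x)$ is indeed independent of $z\in T_\beta^{-n}(\tau^j T_\beta^n x)$ so that the set $\{\varphi_n(\cdot)\in c+I-y\}$ is a pure level-set shift to which the uniform LLT bound applies.
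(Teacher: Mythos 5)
Your proposal is correct and follows essentially the same route as the paper's proof: reduce via Lemma \ref{lem: Calculation of S_K(n,r)} to a sum of preimage counts, rewrite each count as $\mathcal{L}^n$ of a shifted level-set indicator evaluated at $\tau^j T_\beta^n x$, pass to $\hat T_\beta^n$ using $\mathcal L f=\beta h\,\hat T_\beta(f/h)$, and apply the uniform upper-bound LLT (Theorems \ref{thm:Upper bound LLT} and \ref{thm:Upper bound LLT - continuous}) to each term before summing over $j$. Your explicit verification that $k_{n,j}(x)$ is independent of the preimage $z$, and your handling of the bounded density $h$ via positivity of $\hat T_\beta$, are just slightly more detailed renderings of steps the paper leaves implicit.
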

\begin{proof}
By lemma \ref{lem: Calculation of S_K(n,r)}
\begin{alignat*}{1}
S_{K_{n}^{r}\left(x\right)}\left(\ind_{\hat{X}\times I}\right) & \left(x,y\right)\\
 & \leq\sum_{j=0}^{r-1}\#\left\{ z\in T_{\beta}^{-n}\left(\tau^{j}T_{\beta}^{n}\left(x\right)\right):\,\sum_{k=0}^{n+N\left(T^{n}x,\tau^{j}T^{n}x\right)}\varphi\left(T_{\beta}^{k}x\right)-\varphi\left(T_{\beta}^{k}\left(z\right)\right)\in I-y\right\} 
\end{alignat*}
 Similarly to the calculation in the proof of lemma \ref{lem:Estimation by LLT},
setting 
\[
k_{n}\left(x,z\right)=\sum_{k=n+1}^{N\left(T^{n}x,\tau^{j}T^{n}x\right)}\varphi\left(T_{\beta}^{k}x\right)-\varphi\left(T_{\beta}^{k}z\right)
\]
 we have
\begin{align*}
\sum_{j=0}^{r-1}\#\Biggl\{ z\in T_{\beta}^{-n}\left(\tau^{j}T_{\beta}^{n}\left(x\right)\right): & \,\sum_{k=0}^{n+N\left(T^{n}x,\tau^{j}T^{n}x\right)}\varphi\left(T_{\beta}^{k}x\right)-\varphi\left(T_{\beta}^{k}\left(z\right)\right)\in I-y\Biggr\}\\
 & =\sum_{j=0}^{r-1}\mathcal{L}^{n}\left(\ind_{\left\{ \varphi_{n}\left(\cdot\right)\in\varphi_{n}\left(x\right)+k_{n}\left(x,\cdot\right)+I-y\right\} }\right)\left(\tau^{j}T^{n}x\right)\\
 & =\sum_{j=0}^{r-1}\beta^{n}h\left(\tau^{j}T^{n}x\right)\hat{T}^{n}\left(\frac{\ind_{\left\{ \varphi_{n}\left(\cdot\right)\in\varphi_{n}\left(x\right)-k_{n}\left(x,\cdot\right)+I-y\right\} }}{h\left(\cdot\right)}\right)\left(\tau^{j}T^{n}x\right)\\
 & \leq C\cdot r\frac{\beta^{n}}{\sqrt{n}}.
\end{align*}
where $C$ is some constant. The last inequality follows from theorems
\ref{thm:Upper bound LLT}, \ref{thm:Upper bound LLT - continuous}
using $k_{n}\left(x,z\right)\leq M\log n$ (see lemma \ref{lem:Estimation by BC}). 
\end{proof}

\subsection{\label{sub:Proof-of-the main thm}Proof of theorem \ref{thm:Main Theorem}.}

Let $g$ be bounded and continuous on $\left[0,\infty\right]$ and
let $f\in L^{1}\left(\lambda\times dy\right)$, $f\geq0$. Our objective
is to prove that for $a_{n}\propto\frac{n}{\sqrt{\log n}}$, 
\[
g\left(S_{n}f\right)dm\longrightarrow Eg\left(e^{-\frac{1}{2}\chi^{2}}\right)
\]
 where $\chi$ is a standard Gaussian random variable. 

Fix $\epsilon>0$. Since we have $\frac{1}{n}Var_{m}\left(\sum\varphi\circ T_{\beta}^{i}\right)\longrightarrow\sigma^{2}>0$,
it follows from Chebychev's inequality that if $\delta$ is large
enough $m\left(\frac{\bar{\varphi}_{n}}{\sqrt{n}}\in B\left(0,\delta\right)\right)>1-\epsilon$
for all $n\in\bbN$. By the fact that the invariant density $h$ is
bounded from above and is bounded away from zero, the same is true
with the Lebesgue measure $\lambda$ instead of $m$. Thus, for every
$n\in N$, there exist a set $B_{n}$ with $\lambda\left(B_{n}\right)>1-\epsilon$
such that $\frac{\bar{\varphi}_{n}\left(x\right)}{\sqrt{n}}\in B\left(0,\delta\right)$
for all $x\in B_{n}$. By lemma \ref{lem:Final lemma} there exists
$K$, $l_{n}\sim\log n$, such that for all $n>K$, there exists a
set $A_{n}$ with $\lambda\left(A_{n}\right)>1-\epsilon$, so that
\[
\ind_{B\left(0,\delta\right)}\frac{\sqrt{l_{n}}}{n}S_{n}\left(\ind_{\hat{X}\times I}\right)\left(x,y\right)=\ind_{B\left(0,\delta\right)}\frac{\left|I\right|}{\sigma\sqrt{2\pi}}\left(e^{-\frac{\bar{\varphi}_{l_{n}}^{2}\left(s\right)}{2\sigma l_{n}}}\pm\epsilon\right)
\]
for every $x\in A_{n}$, $y\in I$. By the uniform continuity of the
function $g$ this implies that on the set $A_{n}$, 
\[
g\left(\ind_{B\left(0,\delta\right)}\frac{\sqrt{l_{n}}}{n}S_{n}\left(\ind_{\hat{X}\times I}\right)\left(x,y\right)\right)=g\left(\ind_{B\left(0,\delta\right)}\frac{\left|I\right|}{\sigma\sqrt{2\pi}}e^{-\frac{\bar{\varphi}_{l_{n}}^{2}\left(s\right)}{2\sigma l_{n}}}\right)\pm\epsilon
\]
Since $\lambda\left(A_{n}\bigcap B_{n}\right)>1-2\epsilon$ it follows
that 
\[
\left|\int\limits _{\hat{X}\times I}g\left(\frac{\sqrt{l}_{n}}{n}S_{n}\left(\ind_{\hat{X}\times I}\right)\left(x,y\right)\right)dm-\int\limits _{\hat{X}\times I}g\left(\frac{\left|I\right|}{\sigma\sqrt{2\pi}}e^{-\frac{\bar{\varphi}_{l_{n}}^{2}\left(s\right)}{2\sigma l_{n}}}\right)dm\right|\leq\epsilon+4\epsilon\sup\left|g\right|.
\]
Now by the CLT, 
\[
\int\limits _{\hat{X}\times I}g\left(\frac{\left|I\right|}{\sigma\sqrt{2\pi}}e^{-\frac{\bar{\varphi}_{l_{n}}^{2}\left(s\right)}{2\sigma l_{n}}}\right)dm\longrightarrow\int\limits _{I}E\left(g\left(\frac{\left|I\right|}{\sigma\sqrt{2\pi}}e^{-\chi^{2}}\right)\right)dy=\left|I\right|E\left(g\left(\frac{\left|I\right|}{\sigma\sqrt{2\pi}}e^{-\chi^{2}}\right)\right).
\]
It follows that there exists a sequence $a_{n}\propto\frac{n}{\sqrt{\log n}}$,
such that for all bounded and continuous $g$ on $\left[0,\infty\right]$,
$p=\frac{\ind_{\hat{X}\times I}}{m\left(\hat{X}\times I\right)}\in L^{\infty}\left(m\right)$,
$h=\ind_{\hat{X}\times G}$,
\begin{equation}
\int\limits _{\hat{X}\times G}g\left(\frac{1}{a_{n}}S_{n}\left(h\right)\right)\cdot pdm\longrightarrow E\left(g\left(m\left(h\right)\cdot e^{-\chi^{2}}\right)\right).\label{eq: Conv. For certain fn}
\end{equation}
We claim that this implies 
\begin{equation}
\frac{1}{a_{n}}S_{n}\left(h\right)\overset{\mathcal{L}}{\longrightarrow}e^{-\chi^{2}}.\label{eq: Conv. in dist.}
\end{equation}
To see this, assume by contradiction that this is not the case. Then
by definition, there exists a probability measure $q\ll m$, a function
$f\in C\left[0,\infty\right]$, $\epsilon>0$ and a subsequence $n_{k}$
such that 
\begin{equation}
\left|\int\limits _{\hat{X}\times G}f\left(\frac{1}{a_{n_{k}}}S_{n_{k}}\left(h\right)\right)\cdot qdm-E\left(f\left(m\left(h\right)\cdot e^{-\chi^{2}}\right)\right)\right|>\epsilon\label{eq: Contradiction inequality}
\end{equation}
for all $k\in\bbN$. By corollary 3.6.2 in \cite{Aa1}, there exists
a further subsequence $m_{l}:=n_{k_{l}}$ and a random variable $Y$
on $\left[0,\infty\right]$, such that 
\[
\frac{1}{a_{m_{l}}}S_{m_{l}}\left(h\right)\overset{\mathcal{L}}{\longrightarrow}Y.
\]
It follows that for all $g\in C\left[0,\infty\right]$, and a probability
measure $q\ll m$, 
\[
\int\limits _{\hat{X}\times G}g\left(\frac{1}{a_{m_{l}}}S_{m_{l}}\left(h\right)\right)\cdot qdm\longrightarrow E\left(g\left(m\left(h\right)\cdot Y\right)\right).
\]
But (\ref{eq: Conv. For certain fn}) implies that for $p=\frac{\ind_{\hat{X}\times I}}{m\left(\hat{X}\times Y\right)}$,
$ $$g\in C\left[0,\infty\right]$,
\[
\int\limits _{\hat{X}\times G}g\left(\frac{1}{a_{m_{l}}}S_{m_{l}}\left(h\right)\right)\cdot pdm\longrightarrow E\left(g\left(m\left(h\right)e^{-\chi^{2}}\right)\right)
\]
whence $Y$ has the same distribution as $e^{-\chi^{2}}$. This contradicts
(\ref{eq: Contradiction inequality}) and therefore (\ref{eq: Conv. in dist.})
holds. 

As explained in the introduction, by Hopf's ergodic theorem (\cite[Corollary 3.6.2]{Aa1}),
(\ref{eq: Conv. in dist.}) implies $\frac{1}{a_{n}}S_{n}\overset{\nu}{\longrightarrow}Y$,
which proves the theorem.

\section{\label{sec:Bounded-rational-ergodicity}Bounded rational ergodicity}

In this section we prove:
\begin{thm}
The random walk adic transformation $\left(\hat{X}\times G,\left(\mathcal{B}\bigcap\hat{X}\right)\times\mathcal{B}\left(G\right),\mu,\tau_{\varphi}\right)$,
with $\varphi:X\rightarrow G$ satisfying the assumptions of theorem
\ref{thm:Main Theorem} is bounded rationally ergodic with return
sequence $a_{n}\propto\frac{\sqrt{\log n}}{n}$. \end{thm}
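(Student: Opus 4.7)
The plan is to take $A = \hat{X} \times I$ with $I$ a bounded interval (or $I=\{0\}$ in the $\bbZ$ case) so that $\mu(A)<\infty$, and to verify condition (\ref{eq:Bdd rat ergodicity}) by sandwiching $n$ between consecutive values $K_{l_n}^{r}(x)$. With $l_n := \bigl[\log_\beta n\bigr] - c$ for an appropriate constant $c$, I would first show that for every $x\in\hat X$ there exists $r_n(x)\in\bbN$ with $K_{l_n}^{r_n(x)}(x)\le n<K_{l_n}^{r_n(x)+1}(x)$ and moreover $r_n(x)\le C$ for a universal $C$. This uniform control (unlike the almost-sure version in lemma~\ref{lem: Estimation for K(n,r)}) relies only on the pointwise estimate $\#T_\beta^{-l_n}(y)=\beta^{l_n}h(y)\hat T_\beta^{l_n}(1/h)(y)$ together with the two-sided bounds $1-1/\beta\le h\le \beta/(\beta-1)$ and $\hat T_\beta^{l_n}(1/h)\to E_m(1/h)$; these give $c_1\beta^{l_n}\le K_{l_n}(\tau_{l_n}^j x)\le c_2\beta^{l_n}$ for large $l_n$, so each $r$-increment of $K_{l_n}^r$ lies in a fixed interval of size $\Theta(\beta^{l_n})=\Theta(n)$.

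For the $L^\infty$ upper bound I would then invoke lemma~\ref{lem:Bounded rat ergodicity } to obtain, for every $(x,y)\in A$,
\[
S_n(\ind_A)(x,y)\le S_{K_{l_n}^{r_n(x)+1}(x)}(\ind_A)(x,y)\le C(r_n(x)+1)\frac{\beta^{l_n}}{\sqrt{l_n}}\le C'\,\frac{n}{\sqrt{\log n}},
\]
so $\bigl\|S_n\ind_A\bigr\|_{L^\infty(A)}=O\bigl(n/\sqrt{\log n}\bigr)=O(a_n)$ with $a_n\propto n/\sqrt{\log n}$ as in theorem~\ref{thm:Main Theorem} (the statement of the present theorem contains an obvious transposition of numerator and denominator).

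For the matching lower bound on $\int_A S_n(\ind_A)\,d\mu$ the plan is to combine the distributional stability statement $\frac{1}{a_n}S_n(\ind_A)\xrightarrow{\mathcal{L}(\mu)} \mu(A)\,e^{-\chi^2}$ from theorem~\ref{thm:Main Theorem} with the $L^\infty(A)$ bound just proved. Since $S_n(\ind_A)/a_n$ restricted to $A$ is uniformly bounded in $L^\infty$, the family $\{S_n(\ind_A)/a_n\}$ is uniformly integrable with respect to the probability measure $\ind_A\,d\mu/\mu(A)$; convergence in distribution together with uniform integrability upgrades to convergence of means, giving
\[
\frac{1}{a_n}\int_A S_n(\ind_A)\,d\mu\;\longrightarrow\;\mu(A)^2\,E\bigl(e^{-\chi^2}\bigr).
\]
Because $E(e^{-\chi^2})$ is a strictly positive constant, for $n$ large enough the integral dominates $\tfrac12 a_n\mu(A)^2 E(e^{-\chi^2})$, and the ratio $\|S_n\ind_A\|_{L^\infty(A)}/\int_A S_n\ind_A\,d\mu$ stays bounded; a separate absorbing constant handles the finitely many small~$n$. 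This yields (\ref{eq:Bdd rat ergodicity}) with the asserted return sequence.

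The main obstacle, in my view, is the uniform-in-$x$ bound $r_n(x)\le C$, since lemmas~\ref{lem: Estimation for K(n,r)} and~\ref{lem:Final lemma} only give such control on a set of large Lebesgue measure; here one really needs the pointwise two-sided estimates on $\#T_\beta^{-l_n}$, which fortunately follow cleanly from the Rényi bounds on $h$ and from $\hat T_\beta^{l_n}(1/h)\to E_m(1/h)$ uniformly in $x$. A secondary, more routine point is justifying the passage from convergence in distribution to convergence of the first moment; the $L^\infty$ bound makes this essentially automatic via bounded convergence applied to a truncation argument.
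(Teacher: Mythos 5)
Your proposal is correct, and it splits naturally into two halves that compare differently with the paper. For the $L^{\infty}$ upper bound you and the paper do essentially the same thing: both arguments rest on Lemma \ref{lem:Bounded rat ergodicity } together with a \emph{pointwise} (not merely almost-sure) lower bound on the block sizes coming from R\'enyi's estimate $h\geq1-\frac{1}{\beta}$ and $\hat{T}_{\beta}^{l_{n}}\left(\frac{1}{h}\right)\rightarrow E_{m}\left(\frac{1}{h}\right)$. The only cosmetic difference is that the paper shortcuts your uniform bound $r_{n}\left(x\right)\leq C$ by taking $l_{n}$ slightly larger (so that $\beta^{l_{n}}\approx Ln$) and observing that already $K_{l_{n}}^{2}\left(x\right)\geq\beta^{l_{n}}\left(1-\frac{1}{\beta}\right)\pm\eta^{l_{n}}\geq n$ for every $x$, i.e.\ it only ever needs $r=2$; your version with bounded $r_{n}\left(x\right)$ and $\beta^{l_{n}}\asymp n$ is equivalent and equally valid. (You are also right that the return sequence in the theorem statement is a transposition of $\frac{n}{\sqrt{\log n}}$.) For the lower bound on $\int_{A}S_{n}\left(\ind_{A}\right)d\mu$ your route is genuinely different: the paper goes back to Lemma \ref{lem:Final lemma} and integrates the pointwise two-sided estimate for $S_{n}\left(\ind_{\hat{X}\times I}\right)$ over the good set $A_{n}\bigcap B_{n}$ of measure at least $1-2\epsilon$, using that $e^{-\frac{\bar{\varphi}_{l_{n}}^{2}}{2\sigma^{2}l_{n}}}$ is bounded below there; you instead invoke the already-established distributional stability of Theorem \ref{thm:Main Theorem} with respect to the normalized restriction of $\mu$ to $A$, and upgrade convergence in distribution to convergence of first moments via the uniform $L^{\infty}$ bound (bounded convergence after truncation). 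Both are sound and there is no circularity, since Theorem \ref{thm:Main Theorem} is proved independently; your argument is softer and shorter but leans on the full strength of the main theorem, whereas the paper's stays at the level of the lemmas and makes the positive constant $c$ explicit.
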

\begin{rem}
As in the case of asymptotical distributional stability, the theorem
is valid for an aperiodic random walk on $G=\bbZ^{k}\times\bbR^{D-k}$
with return sequence $a_{n}\propto\frac{n}{\left(\log n\right)^{\frac{D}{2}}}$.
The changes needed for the proof in this setting are statements of
theorems in section (\ref{sec:Assumptions-on-the-observable}) for
$G=\bbZ^{k}\times\bbR^{D-k}$ as in \cite{AS}.\end{rem}
\begin{proof}
Bounded rational ergodicity follows (see definition ) if we prove
that there exists a measurable $A\subseteq\hat{X}\times G$ and constants
$C,c>0$ such that 
\begin{equation}
\int_{A}S_{n}\left(\ind_{A}\right)\left(x,y\right)d\mu\geq\frac{cn}{\sqrt{\log n}}\label{eq: Rational ergodicity lower bound}
\end{equation}
and 
\begin{equation}
\left\Vert S_{n}\left(\ind_{A}\right)\right\Vert _{\infty}\leq\frac{Cn}{\sqrt{\log n}}.\label{eq:Rational ergodicity upper bound}
\end{equation}
Let $I=\left\{ 0\right\} $ in case $G=\bbZ$ and $I$ a bounded interval
in case $G=\bbR$. Fix $\epsilon>0$. As in the proof of theorem \ref{thm:Main Theorem}
since $\frac{1}{n}Var_{m}\left(\sum\varphi\circ T_{\beta}^{i}\right)\longrightarrow\sigma^{2}>0$,
it follows from Chebychev's inequality that if $\delta$ is large
enough then $m\left(\frac{\bar{\varphi}_{n}}{\sqrt{n}}\in B\left(0,\delta\right)\right)>1-\epsilon$
for all $n\in\bbN$. It follows that for every $n\in\bbN$, there
exist a set $B_{n}$ with $\lambda\left(B_{n}\right)>1-\epsilon$
such that $\frac{\bar{\varphi}_{n}\left(x\right)}{\sqrt{n}}\in B\left(0,\delta\right)$
for all $x\in B_{n}$. By lemma \ref{lem:Final lemma}, there exists
a set $A_{n}\subseteq\hat{X}$ with $\lambda\left(A_{n}\right)>1-\epsilon$,
and a sequence $l_{n}\sim\log n$ such that for every $x\in A_{n}$,
$y\in I$, 
\[
\ind_{B\left(0,\delta\right)}\left(\frac{\bar{\varphi}_{n}}{\sqrt{n}}\right)\frac{\sqrt{l_{n}}}{n}S_{n}\left(\ind_{\hat{X}\times I}\right)\left(x,y\right)=\left|I\right|\frac{1}{\sigma\sqrt{2\pi}}\left(e^{-\frac{\bar{\varphi}_{l_{n}}^{2}\left(x\right)}{2\sigma^{2}l_{n}}}\pm\epsilon\right).
\]
It immediately follows from this that there exists $c>0$ , such that
\[
\int\limits _{\hat{X}\times I}S_{n}\left(\ind_{\hat{X}\times I}\right)\left(x,y\right)d\mu\geq\int_{\hat{X}\times I}\ind_{B_{n}\bigcap A_{n}}\left(x\right)S_{n}\left(\ind\right)\left(x,y\right)d\mu\geq\frac{cn}{\sqrt{\log n}}
\]
whence (\ref{eq: Rational ergodicity lower bound}) is proved. 

To prove (\ref{eq:Rational ergodicity upper bound}) let $l_{n}:=\left[L\log_{\beta}n\right]$
for some constant $L$ to be chosen later and consider $K_{l_{n}}^{2}$.
As in the proof of lemma \ref{lem: Estimation for K(n,r)} we have
\[
K_{l_{n}}^{2}\left(x\right)=K_{l_{n}}\left(x\right)+\beta^{l_{n}}h\left(\tau T_{\beta}^{l_{n}}x\right)\pm\eta^{l_{n}}
\]
where $0<\eta<1$. Since $h\geq1-\frac{1}{\beta}$, 
\[
\beta^{l_{n}}h\left(\tau T_{\beta}^{l_{n}}x\right)\geq\frac{Ln}{\beta}\left(1-\frac{1}{\beta}\right)\pm\eta^{l_{n}}.
\]
It follows that there exists $L$ such that $K_{l_{n}}^{2}\geq n$
for all $n\in\bbN$. Thus, using lemma \ref{lem:Bounded rat ergodicity }

\begin{eqnarray*}
S_{n}\left(\ind_{\hat{X}\times I}\right)\left(x,y\right) & \leq & S_{K_{l_{n}^{2}\left(x\right)}}\left(\ind_{\hat{X}\times I}\right)\\
 & \leq & C\frac{\beta^{l_{n}}}{\sqrt{l_{n}}}\leq\tilde{C}\frac{n}{\sqrt{\log n}}
\end{eqnarray*}
whence \ref{eq:Rational ergodicity upper bound} follows. $ $\end{proof}

\end{document}